\newtheorem{thm}{Theorem}[section]
\newtheorem*{thm*}{Theorem}
\newtheorem{prop}[thm]{Proposition}
\newtheorem{cor}[thm]{Corollary}
\newtheorem{lemma}[thm]{Lemma}
\newtheorem{obs}[thm]{Observation}
\newtheorem{conj}[thm]{Conjecture}
\newtheorem*{conj*}{Conjecture}
\theoremstyle{definition}
\newtheorem{defn*}{Definition}
\newtheorem*{example*}{Example}
\newtheorem*{comment*}{Comment}
\newcommand{\BBB}{\mathcal{B}}
\newcommand{\CCC}{\mathcal{C}}
\newcommand{\DDD}{\mathcal{D}}
\newcommand{\GGG}{\mathcal{G}}
\newcommand{\NNN}{\mathcal{N}}
\newcommand{\PPP}{\mathcal{P}}
\newcommand{\Free}{\operatorname{Free}}
\newcommand{\Label}{\operatorname{label}}
\newcommand{\Colour}{\operatorname{colour}}
\newcommand{\cwd}{\operatorname{cwd}}
\newcommand{\tree}{\operatorname{tree}}
\newcommand{\mnnodearray}[2]{ 
\foreach \i in {1,...,#1}
	\foreach \j in {1,...,#2}
		\node at (\i,\j) {};
}
\newcommand{\downhoriz}[2]{ 
\pgfmathtruncatemacro\rightcol{#1 + 1}
\foreach \i in {1,...,#2}
	\foreach \j in {\i,...,#2}
		\draw (#1,\j) -- (\rightcol,\i);
}
\newcommand{\upnohoriz}[2]{ 
\pgfmathtruncatemacro\rightcol{#1 + 1}
\pgfmathtruncatemacro\rowminus{#2 - 1}
\foreach \i in {1,...,\rowminus}
	{
	\pgfmathtruncatemacro\ip{\i+1}
	\foreach \j in {\ip,...,#2}
		\draw (#1,\i) -- (\rightcol,\j);
	}
}
\newcommand{\downnohoriz}[2]{ 
\pgfmathtruncatemacro\rightcol{#1 + 1}
\pgfmathtruncatemacro\rowminus{#2 - 1}
\foreach \i in {1,...,\rowminus}
	{
	\pgfmathtruncatemacro\ip{\i+1}
	\foreach \j in {\ip,...,#2}
		\draw (#1,\j) -- (\rightcol,\i);
	}
}
\newcommand{\horiz}[2]{ 
\pgfmathtruncatemacro\rightcol{#1 + 1}
\foreach \i in {1,...,#2}
		\draw (#1,\i) -- (\rightcol,\i);
}
\tikzstyle{vertex}=[circle, draw, fill=black,
\tikzstyle{every node}=[circle, draw, fill=black,
\title{A framework for minimal hereditary classes of graphs of unbounded clique-width}
\author{R. Brignall \qquad D. Cocks \footnote{Supported by the Engineering and Physical Sciences Research Council [EP/V520147/1].}\\
\small School of Mathematics and Statistics\\
\small The Open University, UK}
\begin{document}
\maketitle

\begin{abstract}
We create a framework for hereditary graph classes $\mathcal{G}^\delta$ built on a two-dimensional grid of vertices and edge sets defined by a triple $\delta=\{\alpha,\beta,\gamma\}$ of objects that define edges between consecutive columns, edges between non-consecutive columns (called bonds), and edges within columns. This framework captures a large family of minimal hereditary classes of graphs of unbounded clique-width, some previously identified and many new ones, although we do not claim this includes all such classes.

We show that a graph class $\mathcal{G}^\delta$ has unbounded clique-width if and only if a certain parameter $\mathcal{N}^\delta$ is unbounded. 
We further show that $\mathcal{G}^\delta$ is minimal of unbounded clique-width (and, indeed, minimal of unbounded linear clique-width) if another parameter $\mathcal{M}^\beta$ is bounded, and also $\delta$ has defined recurrence characteristics. Both the parameters $\mathcal{N}^\delta$ and $\mathcal{M}^\beta$ are properties of a triple $\delta=(\alpha,\beta,\gamma)$, and measure the number of distinct neighbourhoods in certain auxiliary graphs.

Throughout our work, we introduce new methods to the study of clique-width, including the use of Ramsey theory in arguments related to unboundedness, and explicit (linear) clique-width expressions for subclasses of minimal classes of unbounded clique-width.

\end{abstract}

%
%
%
%
%
%
\section{Introduction}

Until $4$ years ago only a couple of examples of minimal hereditary classes of unbounded clique-width had been identified, see Lozin~\cite{lozin:minimal-classes:}. However, more recently many more such classes have been identified, in Atminas, Brignall, Lozin and Stacho~\cite{abls:minimal-classes-of:}, Collins, Foniok, Korpelainen, Lozin and Zamaraev~\cite{collins:infinitely-many:}, Dawar and Sankaran~\cite{dawar:clique_width:} and most recently the current authors demonstrated an uncountably infinite family of minimal hereditary classes of unbounded clique-width in \cite{brignall_cocks:uncountable:}. 

This paper brings together all but one of these examples into a single consistent framework. The framework consists of hereditary graph classes constructed by taking the finite induced subgraphs of an infinite graph $\PPP^\delta$ whose vertices form a two-dimensional array and whose edges are defined by three objects, collectively denoted as a triple $\delta=(\alpha,\beta,\gamma)$. Though we defer full definitions until Section~\ref{prelim}, the components of the triple define edges between consecutive columns ($\alpha$), between non-consecutive columns ($\beta$ `bonds'), and within columns ($\gamma$) as follows.

\begin{enumerate}[label=(\alph*)]
\item $\alpha$ is an infinite word from the alphabet $\{0,1,2,3\}$. The four types of $\alpha$-edge sets between consecutive columns can be described as a matching ($0$), the complement of a matching ($1$), a chain ($2$) and the complement of a chain ($3$), (illustrated in Figure \ref{Hgraph}). 
\item $\beta$ is a symmetric subset of pairs of natural numbers $(x,y)$. If $(x,y) \in \beta$ then every vertex in column $x$ is adjacent to every vertex in column $y$.
\item $\gamma$ is an infinite binary word. If the $j$-th letter of $\gamma$ is $0$ then vertices in column $j$ form an independent set and if it is $1$ they form a clique.   
\end{enumerate}

We show that these hereditary graph classes $\mathcal{G}^\delta$ have unbounded clique-width if and only if a parameter $\mathcal{N}^\delta$ measuring the number of distinct neighbourhoods between any two rows of the grid, is unbounded -- see Theorem \ref{thm-0123unbound}. We denote $\Delta$ as the set of $\delta$-triples for which $\mathcal{G}^\delta$ has unbounded clique-width.

Furthermore, we define a subset $\Delta_{min} \subset \Delta$ such that if $\delta \in \Delta_{min}$ the hereditary graph class $\mathcal{G}^\delta$ is minimal both of unbounded clique-width and of unbounded \emph{linear} clique-width (Definitions in Section \ref{sect_cwd} and result Theorem \ref{thm-0123minim}). 
Referring to $\delta^*=\delta_{[a,a+b]}$ as a \emph{factor of $\delta$} being a subset  of $\delta$ defining all edges between  vertices in columns $a, a+1,\dots, a+b$, these 'minimal' $\delta$-triples are characterised by:
\begin{enumerate}[label=(\alph*)]
\item $\delta \in \Delta$, 
\item $\delta$ is \emph{$\mathcal{N}^{\delta}$-bounded recurrent} (i.e. any factor $\delta^*$ of $\delta$ repeats an infinite number of times, and the subgraphs induced on the columns between two consecutive disjoint copies of $\delta^*$ (the $\delta$-factor `gap') have bounded $\mathcal{N}^{\delta}$ (always true for almost periodic $\delta$)), and
\item a bound on a parameter $\mathcal{M}^\beta$ defined by the bond set $\beta$, which is a measure of the number of distinct neighbourhoods between intervals of a single row.  
\end{enumerate}

All but one hereditary graph classes previously shown to be minimal of unbounded clique-width  fit this grid framework i.e.\ they are defined by a $\delta$-triple in $\Delta_{min}$. This is demonstrated in Table~\ref{table-classes} which shows their corresponding $\delta=(\alpha,\beta,\gamma)$ values from the framework. The only minimal class so far discovered not in the table is \emph{power graphs} \cite{dawar:clique_width:}, a class built on a single path rather than a two dimensional grid.

\begin{table}[ht!]
\renewcommand{\arraystretch}{1.5}
\centering
\begin{tabular}{|m{4.75cm}|m{3.75cm}|m{4.5cm}|m{1.0cm}|} 
\hline
\textbf{Name}  &\textbf{$\alpha$} &\textbf{$\beta$} \text{      } \textbf{($x,y \in \mathbb{N}$)} &\textbf{$\gamma$}\\
\hline
Bipartite permutation \cite{lozin:minimal-classes:} &$2^\infty$ &$\emptyset$ &$0^\infty$ \\
\hline
Unit interval \cite{lozin:minimal-classes:} &$2^\infty$ &$\emptyset$ &$1^\infty$\\
\hline
Bichain \cite{abls:minimal-classes-of:} &$(23)^\infty$ &$(2x,2x+2y+1)$ &$0^\infty$\\
\hline
Split permutation \cite{abls:minimal-classes-of:} &$(23)^\infty$ &$(2x,y): y>2x+1$ &$(01)^\infty$\\
\hline
$\alpha \in \{0,1\}$ \cite{collins:infinitely-many:} &periodic&$\emptyset$ &$0^\infty$\\
\hline
$\alpha \in \{0,1,2,3\}$ \cite{brignall_cocks:uncountable:} &recurrent \tablefootnote{A set of minimal classes $\Gamma$ defined by an infinite word $\alpha$ which is recurrent over the alphabet $\{0,1,2,3\}$ and for which the 'gap' factors have a  bounded number of non-zero letters (including all almost periodic $\alpha$)} &$\emptyset$ &$0^\infty$\\
\hline
\end{tabular}
\caption{Hereditary graph classes proven to be minimal of unbounded clique-width}
\label{table-classes}
\end{table}

The viewpoint provided by our framework offers a fuller understanding of the landscape of (the uncountably many known) minimal hereditary classes of unbounded clique-width. This landscape is in stark contrast to the situation for downwards-closed sets of graphs under different orderings and with respect to other parameters.
For example, planar graphs are the unique minimal minor-closed class of graphs of unbounded treewidth (see Robertson and Seymour~\cite{robertson:graph-minors-v:}), and circle graphs are the unique minimal vertex-minor-closed class of unbounded rank-width (or, equivalently, clique-width) -- see Geelen, Kwon, McCarty and Wollan~\cite{geelen:the-grid-theorem:}. 
Nevertheless, clique-width is more compatible with hereditary classes of graphs than treewidth: if $H$ is an induced subgraph of $G$, then the clique-width of $H$ is at most the clique-width of $G$, but the same does not hold in general for treewidth.

Our focus on the minimal classes of unbounded clique-width is due to the following fact: any graph property expressible in MSO$_1$ logic has a linear time algorithm on graphs with bounded clique-width, see Courcelle, Makowsky and Rotics~\cite{courcelle:linear-time-sol:}. 
As it happens, any proper subclass of a minimal class from our framework also has bounded \emph{linear} clique-width. 
However, beyond our framework there do exist classes that have bounded clique-width but unbounded linear clique-width, see~\cite{alecu:lin-clique-width:} and~\cite{bkv:lcw-cographs:}.

After introducing the necessary definitions in Section~\ref{prelim}, the rest of this paper is organised as follows.

We set out in Section~\ref{Sect:Unbounded} our proof determining which hereditary classes $\mathcal{G}^\delta$ have unbounded clique-width. Proving a class has unbounded clique-width is done from first principles, using a new method, by identifying a lower bound for the number of labels required for a clique-width expression for an $n \times n$ square graph, using distinguished coloured vertex sets and showing such sets always exist for big enough $n$ using Ramsey theory. For those classes which have bounded clique-width, we prove this by providing a general clique-width expression for any graph in the class, using a bounded number of labels. 

In Section~\ref{Sect:Minimal} we prove that the class $\mathcal{G}^\delta$ is minimal of unbounded clique-width if $\delta \in \Delta_{min}$. To do this we introduce an entirely new method of 'veins and slices', partitioning the vertices of an arbitrary graph in a proper subclass of $\GGG^\delta$  into sections we call 'panels' using vertex colouring. We then create a recursive linear clique-width expression to construct these panels in sequence, allowing recycling of labels each time a new panel is constructed, so that an arbitrary graph can be constructed with a bounded number of labels. 

Previous papers on minimal hereditary graph classes of unbounded clique-width have focused mainly on bipartite graphs. 
The introduction of $\beta$-bonds and $\gamma$-cliques has significantly broadened the scope of proven minimal classes. 
 
In Section~\ref{Sect:examples} we provide some examples of new hereditary graph classes that are minimal of unbounded clique-width revealed by this approach.  Finally, in Section~\ref{Sect-Conclude}, we discuss where the investigation of minimal classes of unbounded clique-width might go next. 

%
%
%
%
%
\section{Preliminaries} \label{prelim}

\subsection{Graphs - General}

A graph $G=(V,E)$ is a pair of sets, vertices $V=V(G)$ and edges $E=E(G)\subseteq V(G)\times V(G)$. Unless otherwise stated, all graphs in this paper are simple, i.e. undirected, without loops or multiple edges. 

If vertex $u$ is adjacent to vertex $v$ we write $u \sim v$ and if $u$ is not adjacent to $v$ we write $u \not\sim v$. We denote $N(v)$ as the neighbourhood of a vertex $v$, that is, the set of vertices adjacent to $v$. A set of vertices is \emph{independent} if no two of its elements are adjacent and is a \emph{clique} if all the vertices are pairwise adjacent. We denote a clique with $r$ vertices as $K^r$ and an independent set of $r$ vertices as $\overline{K^r}$. A graph is \emph{bipartite} if its vertices can be partitioned into two independent sets, $V_1$ and $V_2$, and is \emph{complete bipartite} if, in addition, each vertex of $V_1$ is adjacent to each vertex of $V_2$.  

We will use the notation $H \le G$ to denote graph $H$ is an \emph{induced subgraph} of graph $G$, meaning $V(H) \subseteq V(G)$ and two vertices of $V(H)$ are adjacent in $H$ if and only if they are adjacent in $G$. We will denote the subgraph of $G=(V,E)$ induced by the set of vertices $U \subseteq V$ by $G[U]$. If graph $G$ does not contain an induced subgraph isomorphic to $H$ we say that $G$ is \emph{$H$-free}.

A class of graphs $\CCC$ is \emph{hereditary} if it is closed under taking induced subgraphs, that is $G \in \CCC$ implies $H \in \CCC$ for every induced subgraph $H$ of $G$. It is well known that for any hereditary class $\CCC$ there exists a unique (but not necessarily finite) set of minimal forbidden graphs $\{H_1, H_2, \dots\}$ such that $\CCC= \Free(H_1, H_2, \dots)$ (i.e. any graph $G \in \CCC$ is $H_i$-free for $i=1,2,\dots$). We will use the notation $\CCC \subseteq \GGG$ to denote that $\CCC$ is a  \emph{hereditary subclass} of hereditary graph class $\GGG$ ($\CCC \subsetneq \GGG$ for a proper subclass).

An \emph{embedding} of graph $H$ in graph $G$ is an injective map $\phi:V(H) \rightarrow V(G)$ such that the subgraph of $G$ induced by the vertices $\phi(V(H))$ is isomorphic to $H$. In other words, $vw \in E(H)$ if and only if $\phi(v) \phi(w) \in E(G)$. If $H$ is an induced subgraph of $G$ then this can be witnessed by one or more embeddings.

Given a graph $G=(V,E)$ and a subset of vertices $U \subseteq V$, two vertices of $U$ will be called \emph{$V\setminus U$-similar} if they have the same neighbourhood in $V\setminus U$. Thus $V\setminus U$-similarity is an equivalence relation. The number of such equivalence classes of $U$ in $G$ will be denoted $\mu(G,U)$.   A special case is when all the equivalence classes are singletons when we call $U$ a \emph{distinguished vertex set}.

A \emph{distinguished pairing} $\{U,W\}$ of size $r$ of a graph $G=(V,E)$ is a pair of vertex subsets $U=\{u_i\} \subseteq V$  and $W=\{w_i\} \subseteq V\setminus U$ with $|U|=|W|=r$ such that the vertices in $U$ have pairwise different neighbourhoods in $W$ (but not necessarily vice-versa). A distinguished pairing is \emph{matched} if the vertices of $U$ and $W$ can be paired $(u_i, w_i)$ so that  $u_i \sim w_i$ for each $i$, and is \emph{unmatched} if the vertices of $U$ and $W$ can be paired $(u_i, w_i)$ so that  $u_i \not\sim w_i$ for each $i$. Clearly the set $U$ of a distinguished pairing $\{U,W\}$ is a distinguished vertex set of $G[U \cup W]$ which gives us the following: 

\begin{prop}\label{prop-pair}
If $\{U,W\}$ is a distinguished pairing of size $r$ in graph $G$ then $\mu(G[U \cup W],U)=r$.
\end{prop}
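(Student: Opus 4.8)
The plan is to observe that the statement simply unwinds the definitions, the only point needing a word of care being that neighbourhoods computed inside the induced subgraph agree, on $W$, with those computed in $G$. First I would record that $U$ and $W$ are disjoint, since this is built into the definition of a distinguished pairing ($W\subseteq V\setminus U$). Writing $H:=G[U\cup W]$, disjointness gives $|V(H)|=2r$ and, crucially, $V(H)\setminus U = W$.

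Next I would note that for every vertex $u\in U$, its neighbourhood within $W$ is the same whether computed in $G$ or in $H$: adjacency is both preserved and reflected under taking induced subgraphs, and $W\subseteq V(H)$. Consequently the $V(H)\setminus U$-similarity relation on $U$ that defines $\mu(H,U)$ is precisely the relation ``$u_i$ and $u_j$ have the same neighbourhood in $W$''. By the definition of a distinguished pairing the vertices of $U$ have pairwise distinct neighbourhoods in $W$, so this equivalence relation is the identity on $U$; every equivalence class is a singleton, i.e.\ $U$ is a distinguished vertex set of $H$. The number of classes is therefore $|U|=r$, which is exactly the assertion $\mu(G[U\cup W],U)=r$.

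There is no genuine obstacle here; the proof is a two-line verification. The only things worth stating explicitly are the disjointness of $U$ and $W$ (so that $V(H)\setminus U$ really equals $W$) and the invariance of ``neighbourhood in $W$'' under passing to the induced subgraph, both of which are immediate from the definitions given above.
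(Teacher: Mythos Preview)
Your proposal is correct and follows the same approach as the paper. In fact, the paper does not give a formal proof at all: it simply remarks, immediately before the proposition, that ``Clearly the set $U$ of a distinguished pairing $\{U,W\}$ is a distinguished vertex set of $G[U\cup W]$,'' and then states the proposition; your write-up is a careful spelling-out of exactly this observation.
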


\subsection{\texorpdfstring{$\GGG^\delta$}{\GGG{}} hereditary graph classes}

The graph classes we consider are all formed by taking the set of finite induced subgraphs of an infinite graph defined on a grid of vertices. We start by defining an infinite empty graph $\PPP$ with vertices 
\[V(\PPP) = \{v_{i,j} : i, j \in \mathbb{N}\}.\] 
We use Cartesian coordinates throughout this paper. Hence, we think of $\PPP$ as an infinite two-dimensional array in which $v_{i,j}$ represents the vertex in the $i$-th column (counting from the left) and the $j$-th row (counting from the bottom). Hence vertex $v_{1,1}$ is in the bottom left corner of the grid and the grid extends infinitely upwards and to the right. The $i$-th column of $\PPP$ is the set $C_i = \{ v_{i,j} : j \in \mathbb{N}\}$, and the $j$-th row of $\PPP$ is the set $R_j = \{ v_{i,j} : i \in \mathbb{N}\}$. Likewise, the collection of vertices in columns $i$ to $j$ is denoted $C_{[i,j]}$. 

We will add edges to $\PPP$ using a triple $\delta$ of objects that define the edges between consecutive columns, edges between non-consecutive columns and edges within each column. 

We refer to a (finite or infinite) sequence of letters chosen from a finite alphabet as a \emph{word}.  We denote by $\omega_i$ the $i$-th letter of the word $\omega$. A \emph{factor} of $\omega$ is a contiguous subword $\omega_{[i,j]}$ being the sequence of letters from the $i$-th to the $j$-th letter of $\omega$. If $a$ is a letter from the alphabet we will denote $a^\infty$ as the infinite word $aaa\dots$, and if $a_1\dots a_n$ is a finite sequence of letters from the alphabet then we will denote $(a_1\dots a_n)^\infty$ as the infinite word consisting of the infinite repetition of this factor. 

The \emph{length} of a word (or factor) is the number of letters the word contains.

An infinite word $\omega$ is \emph{recurrent} if each of its factors occurs in it infinitely many times. We say that $\omega$ is \emph{almost periodic} (sometimes called \emph{uniformly recurrent} or \emph{minimal}) if for each factor $\omega_{[i,j]}$ of $\omega$ there exists a constant $\mathcal{L}(\omega_{[i,j]})$ such that every factor of $\omega$ of length at least $\mathcal{L}(\omega_{[i,j]})$ contains $\omega_{[i,j]}$ as a factor. Finally, $\omega$ is \emph{periodic} if there is a positive integer $p$ such that $\omega_k=\omega_{k+p}$ for all $k$. Clearly, every periodic word is almost periodic, and every almost periodic word is recurrent.

A \emph{bond-set} $\beta$ is a symmetric subset of $\{(x,y) \in \mathbb{N}^2, |x-y| > 1\}$. For a set $Q \subseteq \mathbb{N}$ we write $\beta_Q$ to mean the subset of $\beta$-bonds $\{(x,y) \in \beta : x,y \in Q\}$. For instance, $\beta_{[i,j]}=\{(x,y) \in \beta : i \le x,y \le j\}$.

Let $\alpha$ be an infinite word over the alphabet $\{0,1,2,3\}$, $\beta$ be a bond set and $\gamma$ be an infinite binary word. We refer to the three objects combined as a \emph{$\delta$-triple}, denoted $\delta=(\alpha,\beta,\gamma)$. 

We define an infinite graph $\PPP^\delta$ with vertices $V(\PPP)$ and with edges defined by $\delta$ as follows:
\begin{enumerate}[label=(\alph*)]
\item \emph{$\alpha$-edges} between consecutive columns determined by the letters of the word $\alpha$. For each $i=1,2,\dots$, the edges between $C_i$ and $C_{i+1}$ are:

	\begin{enumerate}[label=(\roman*)]
	\item $\{(v_{i,j}, v_{i+1,j}) : j \in \mathbb{N}\}$ if $\alpha_i=0$   (i.e.\ a matching);
	\item $\{(v_{i,j}, v_{i+1,k}) : j \neq k; j,k \in \mathbb{N}\}$ if $\alpha_i=1$  (i.e.\ the bipartite complement \footnote{The \emph{bipartite complement} $\hat{G}$ of a bipartite graph $G$ has the same independent vertex sets $V_1$ and $V_2$ as $G$ where vertices $v_1 \in V_1$ and $v_2 \in V_2$ are adjacent in $\hat{G}$ if and only if they are not adjacent in $G$.} of a matching);
	\item $\{(v_{i,j}, v_{i+1,k}) : j \ge k; j,k \in \mathbb{N}\}$ if $\alpha_i=2$;
	\item $\{(v_{i,j}, v_{i+1,k}) : j < k; j,k \in \mathbb{N}\}$ if $\alpha_i=3$ (i.e.\ the bipartite complement of a $2$).
	\end{enumerate}
 
\item \emph{$\beta$-edges} defined by the bond-set $\beta$ such that $v_{i,x} \sim v_{j,y}$ for all $x,y \in \mathbb{N}$ when $(i,j) \in \beta$ (i.e. a complete bipartite graph between $C_i$ and $C_j$), and
\item \emph{$\gamma$-edges} defined by the letters of the binary word $\gamma$ such that for any $j,k \in \mathbb{N}$ we have $v_{i,j} \sim v_{i,k}$ if and only if $\gamma_i=1$ (i.e. $C_i$ forms a clique if $\gamma_i=1$ and an independent set if $\gamma_i=0$). 
\end{enumerate}
  
The hereditary graph class $\GGG^\delta$ is the set of all finite induced subgraphs of $\PPP^\delta$. 

Any graph $G \in \GGG^\delta$ can be witnessed by an embedding $\phi(G)$ into the infinite graph $\PPP^\delta$. To simplify the presentation we will associate $G$ with a particular embedding in $\PPP^\delta$ depending on the context. We will be especially interested in the induced subgraphs of $G$ that occur in consecutive columns: in particular, an \emph{$\alpha_j$-link} is the induced subgraph of $G$ on the vertices of $G\cap C_{[j,j+1]}$, and will be denoted by $G_{[j,j+1]}$. More generally, an induced subgraph of $G$ on the vertices of $G\cap C_{[j,k]}$ will be denoted $G_{[j,k]}$. 

For $k \ge 2$ we denote the triple $\delta_{[j,j+k-1]}=(\alpha_{[j,j+k-2]} ; \beta_{[j,j+k-1]} ; \gamma_{[j,j+k-1]})$ as a \emph{$k$-factor} of $\delta$. Thus for a graph $G \in \GGG^\delta$  with a particular embedding in $\PPP^\delta$, the induced subgraph $G_{[j,j+k-1]}$ has edges defined by the $k$-factor $\delta_{[j,j+k-1]}$.

We say that two $k$-factors $\delta_{[x,x+k]}$ and $\delta_{[y,y+k]}$ are the same if 
\begin{enumerate}[label=(\roman*)]
	\item for all $i \in [0,k-1]$, $\alpha_{x+i}=\alpha_{y+i}$, and
	\item for all $i,j \in [0,k], (x+i,x+j) \in \beta$ if and only 
	if $(y+i,y+j) \in \beta$, and
	\item for all $i \in [0,k]$, $\gamma_{x+i}=\gamma_{y+i}$.
\end{enumerate} 

We say that a $\delta$-triple is \emph{recurrent} if  every $k$-factor occurs in it infinitely many times. We say that $\delta$ is \emph{almost periodic} if for each $k$-factor $\delta_{[j,k]}$ of $\delta$ there exists a constant $\mathcal{L}(\delta_{[j,k]})$ such that every factor of $\delta$ of length $\mathcal{L}(\delta_{[j,k]})$ contains $\delta_{[j,k]}$ as a factor.

A \emph{couple set} $P$ is a subset of $\mathbb{N}$ such that if $x,y \in P$ then $|x-y|>2$. Such a set is used to identify sets of links that have no $\alpha$-edges between them.  We say that a pair $(x, y)$ of elements of $P$ is \emph{$\beta$-dense} if both $(x,y+1)$ and $(x+1,y)$ are in $\beta$ and they are \emph{$\beta$-sparse} when neither of these bonds is in $\beta$.\label{dense-sparse} 

We say the bond-set $\beta$ is \emph{sparse} in $P$ if every pair from $P$ is $\beta$-sparse and is \emph{not sparse} in $P$ if there are no $\beta$-sparse pairs in $P$. Likewise, $\beta$ is  \emph{dense} in $P$ if every pair from $P$ is $\beta$-dense and is \emph{not dense} in $P$ if there are no $\beta$-dense pairs in $P$. Clearly it is possible for two elements from $P$ to be neither $\beta$-sparse nor $\beta$-dense (i.e. when only one of the required bonds is in $\beta$). These ideas are used to identify matched and unmatched distinguished pairings (see Lemmas \ref{lem:beta-not-dense} 
and \ref{lem:beta-not-sparse}).

%
%
%
%
%
%
\subsection{Clique-width and linear clique-width}\label{sect_cwd}

\emph{Clique-width} is a graph width parameter introduced by Courcelle, Engelfriet and Rozenberg in the 1990s~\cite{courcelle:handle-rewritin:}. The clique-width of a graph is denoted $\cwd(G)$ and is defined as the minimum number of labels needed to construct $G$ by means of the following four graph operations:

\begin{enumerate}[label=(\alph*)]
\item creation of a new vertex $v$ with label $i$ (denoted $i(v)$),
\item adding an edge between every vertex labelled $i$ and every vertex labelled $j$ for distinct $i$ and $j$ (denoted $\eta_{i,j}$),
\item giving all vertices labelled $i$ the label $j$ (denoted $\rho_{i \rightarrow j}$), and
\item taking the disjoint union of two previously-constructed labelled graphs $G$ and $H$, one of which may be empty (denoted $G \oplus H$).
\end{enumerate}

The \emph{linear clique-width} of a graph $G$ denoted $lcw(G)$ is the minimum number of labels required to construct $G$ by means of  four operations, being $(a),(b),(c)$ above plus '$(d)$ taking the disjoint union of two previously-constructed labelled graphs $G$ and $H$, one of which is a single labelled vertex $v$ (denoted $G \oplus v$) or no vertex (denoted $G \oplus \emptyset$)'. 

Every graph can be defined by an algebraic expression $\tau$ using the four operations above, which we will refer to as a \emph{(linear) clique-width expression}. This expression is called a \emph{$k$-expression} if it uses $k$ different labels.

Alternatively, any clique-width expression $\tau$ defining $G$ can be represented as a rooted binary tree, $\tree(\tau)$, whose leaves correspond to the operations of vertex creation, the internal nodes correspond to the $\oplus$-operation, and the root is associated with $G$.  The operations $\eta$ and $\rho$ are assigned in the appropriate sequence along the respective edges of  $\tree(\tau)$. The tree is binary since each $\oplus$-operation brings together at most two previously constructed graphs. Also, it can be observed that an $\oplus$-vertex represents a subgraph of $G$ but not usually an induced subgraph since there may still be edges to be created by $\eta$ operations.

In the case of a linear clique-width expression the tree becomes a \emph{caterpillar tree}, that is, a tree that becomes a path after the removal of the leaves.

Clearly from the definition, $lcw(G) \ge cwd(G)$. Hence, a graph class of unbounded clique-width is also a class of unbounded linear clique-width. Likewise, a class with bounded linear clique-width is also a class of bounded clique-width. 

A hereditary class of graphs $\CCC$ is \emph{minimal of unbounded clique-width} or just \emph{minimal} if every proper subclass $\DDD \subsetneq \CCC$ has bounded clique-width. In other words, if $\CCC=Free(H_1,H_2,\dots)$ then it is minimal if any proper subclass $\DDD$ formed by adding just one more forbidden graph  has  bounded clique-width. Thus, if $\CCC$ has unbounded clique-width but $\CCC \cap Free(H)$ has bounded linear clique-width for any non-trivial graph $H$, then $\CCC$ is minimal of unbounded clique-width \underline{and} minimal of unbounded linear clique-width.

%
%
%
%
%
%
\section{\texorpdfstring{$\GGG^\delta$}{\GGG{}} graph classes with unbounded clique-width} \label{Sect:Unbounded}

This section identifies which hereditary classes $\mathcal{G}^\delta$ have unbounded clique-width. We prove this is determined by a neighbourhood parameter $\NNN^\delta$ derived from a graph induced on any two rows of the graph $\PPP^\delta$. We show that $\GGG^\delta$ has unbounded clique-width if and only if  $\NNN^\delta$ is unbounded (Theorem \ref{thm-0123unbound}).

%
%
%
%
\subsection{The two-row graph and \texorpdfstring{$\mathcal{N}^\delta$}{ }} \label{two-row}

We will show that the boundedness of clique-width for a graph class $\GGG^\delta$ is determined by the adjacencies between the first two rows of  $\PPP^\delta$ (it could, in fact, be any two rows). Hence, the following graph is useful.

A \emph{two-row graph} $T^{\delta}(Q)=(V,E)$ is the subgraph of $\PPP^\delta$ induced on the vertices $V=R_1(Q) \cup R_2(Q)$ where $R_1(Q)=\{v_{i,1}: i \in Q\}$ and $R_2(Q)=\{v_{j,2}: j \in Q \}$ for finite subset $Q \subseteq\mathbb{N}$.

We define the parameter 
$\mathcal{N}^\delta(Q)= \mu(T^\delta(Q),R_1(Q))$.

\begin{lemma}\label{lem:Nbound}
For any fixed $j \in \mathbb{N}$, $\mathcal{N}^\delta([1,n])$ is bounded as $n \rightarrow \infty$ if and only if $\mathcal{N}^\delta([j,n])$ is bounded as $n \rightarrow \infty$.
\end{lemma}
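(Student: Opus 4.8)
The plan is to show that passing from the row-interval \([1,n]\) to the row-interval \([j,n]\) changes the neighbourhood-count parameter \(\mathcal{N}^\delta\) by at most a bounded additive/multiplicative amount, so that one side is bounded as \(n\to\infty\) exactly when the other is. Since \(\mathcal{N}^\delta([j,n])\) is computed on the two-row graph \(T^\delta([j,n])\), which is an \emph{induced} subgraph of \(T^\delta([1,n])\) (it is obtained by deleting the finitely many columns \(1,\dots,j-1\) from both rows), one direction is essentially monotonicity: deleting vertices from \(V\setminus U\) can only merge \(V\setminus U\)-similarity classes of \(U\), but here we are also deleting vertices from \(U=R_1\), so a little care is needed. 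Concretely, I would first record the easy inequality \(\mathcal{N}^\delta([j,n]) \le \mathcal{N}^\delta([1,n])\): the classes of \(R_1([j,n])\) under \(R_2([j,n])\)-similarity are exactly the restrictions to \(R_1([j,n])\) of the classes of \(R_1([1,n])\) under \(R_2([1,n])\)-similarity \emph{refined} by the finitely many columns of \(R_2\) we are about to throw away — and throwing columns away only coarsens, while restricting to a subset of \(R_1\) only reduces the count. So the number of classes cannot go up. Hence if \(\mathcal{N}^\delta([1,n])\) is bounded, so is \(\mathcal{N}^\delta([j,n])\).

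For the reverse direction I would argue that adding back the \(j-1\) columns \(1,\dots,j-1\) to both rows increases the parameter by at most a constant depending only on \(j\) (and on \(\delta\), but \(j\) is what matters). Adding the vertices \(v_{1,2},\dots,v_{j-1,2}\) back into \(R_2\) refines each similarity class of \(R_1([j,n])\) into at most \(2^{\,j-1}\) pieces, since each added column vertex either is or is not adjacent to a given vertex of \(R_1\); actually the relevant bound is even smaller because the adjacency of \(v_{i,2}\) (for \(i<j\)) to \(v_{k,1}\) (for \(k\ge j\)) is governed by a \(\beta\)-bond, hence is constant in \(k\) unless \(i+1=j\) or \(i=j\) fails the gap condition — but \(2^{j-1}\) is a safe crude bound. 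Then adding the vertices \(v_{1,1},\dots,v_{j-1,1}\) back into \(R_1\) increases the class count by at most \(j-1\) more. Therefore \(\mathcal{N}^\delta([1,n]) \le 2^{\,j-1}\,\mathcal{N}^\delta([j,n]) + (j-1)\), which shows that if \(\mathcal{N}^\delta([j,n])\) is bounded then so is \(\mathcal{N}^\delta([1,n])\). Combining the two inequalities gives the claimed equivalence.

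The main obstacle, and the only place the argument is not completely mechanical, is bookkeeping the effect of \emph{simultaneously} changing the ground set \(U=R_1\) and the distinguishing set \(V\setminus U=R_2\): the parameter \(\mu(G,U)\) is not monotone in an obvious one-line way when both sets move, so I would prove the two one-sided inequalities separately by explicitly tracking how an equivalence class of one two-row graph sits inside the other. It is worth stating a small auxiliary fact first: if \(U'\subseteq U\) and \(W'\subseteq W\) with \(W=V\setminus U\), then \(\mu(G[U'\cup W'],U')\le \mu(G[U\cup W],U)\) — restricting \(U\) can only decrease the number of classes, and restricting \(W\) can only decrease it further (it coarsens the similarity relation). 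With this lemma in hand both directions follow cleanly, the reverse direction additionally using the \(\beta\)-bond structure (or the crude \(2^{j-1}\) bound) to control the refinement caused by re-inserting the first \(j-1\) columns.
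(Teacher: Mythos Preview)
Your proposal is correct and follows essentially the same approach as the paper. The only cosmetic difference is that for the harder direction you add all $j-1$ missing columns back at once (obtaining the bound $2^{j-1}\mathcal{N}^\delta([j,n])+(j-1)$), whereas the paper adds them one at a time via induction (each step taking $N$ to $2N+1$); the resulting bounds and the underlying ``each new $R_2$-vertex at most doubles the class count, each new $R_1$-vertex adds at most one'' idea are identical.
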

\begin{proof}
It is easy to see that if there exists $N$ such that $\mathcal{N}^\delta([1,n])<N$ for all $n \in \mathbb{N}$ then $\mathcal{N}^\delta([j,n]) < N$ for all $n \in \mathbb{N}$.

On the other hand, if $\mathcal{N}^\delta([j,n])<N$ then $\mathcal{N}^\delta([j-1,n]) < 2N+1$ since by adding the extra column each 'old' equivalence class could at most be split in two and there is one new vertex in each row. By induction we have
$\mathcal{N}^\delta([1,n]) < 2^jN +\sum_{i=0}^{j-1} 2^i$
for all $n \in \mathbb{N}$.
\end{proof}

We will say \emph{$\mathcal{N}^\delta$ is unbounded} if $\mathcal{N}^\delta([j,n])$ is unbounded as $n \rightarrow \infty$ for some fixed $j \in \mathbb{N}$. In many cases it is simple to check that $\mathcal{N}^\delta$ is unbounded -- e.g. the following $\delta$-triples have unbounded $\mathcal{N}^\delta$:
\[(1^\infty, \emptyset, 0^\infty), (2^\infty, \emptyset, 0^\infty), (3^\infty, \emptyset, 0^\infty), (0^\infty, \emptyset, 1^\infty)\]
In Lemma \ref{23-N-unbound} we show that  $\mathcal{N}^\delta$ is unbounded whenever $\alpha$ contains an infinite number of $2$s or $3$s.

%
%
%
%
\subsection{Clique-width expression and colour partition for an  \texorpdfstring{$n\times n$}{n{}n} square graph}

We will denote $H^\delta_{i,j}(m,n)$ as the $m (cols) \times n (rows)$ induced subgraph of $\PPP^\delta$ formed from the rectangular grid of vertices $\{v_{x,y} : x \in [i,i+m-1], y \in [j,j+n-1]\}$. See Figure \ref{Hgraph}.

\begin{figure}[ht]
{\centering
\begin{tikzpicture}[scale=1]
	
	\mnnodearray{9}{6}
		\foreach \col in {1,5}
			\horiz{\col}{6};
		\foreach \col in {2,6}
			\downnohoriz{\col}{6};
		\foreach \col in {2,6}
			\upnohoriz{\col}{6};	
		\foreach \col in {3,7}
			\downhoriz{\col}{6};
		\foreach \col in {4,8}
			\upnohoriz{\col}{6};
	
\end{tikzpicture}\par }

\caption{$H^\delta_{1,1}(9,6)$ where $\alpha=01230123\cdots$ ($\beta$ and $\gamma$ edges not shown)}
	\label{Hgraph}

\end{figure}
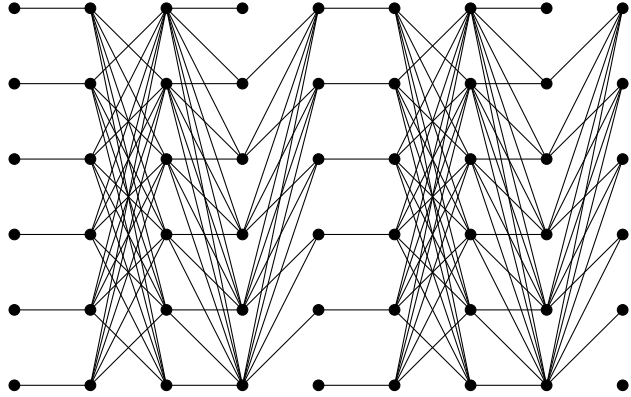


We can calculate a lower bound for the clique-width of the $n \times n$ square graph $H^\delta_{j,1}(n,n)$ (shortened to $H(n,n)$ when $\delta$, $j$ and $1$ are clearly implied), by demonstrating a minimum number of labels needed to construct it using the allowed four graph operations, as follows.

Let $\tau$ be a clique-width expression defining $H(n,n)$ and $\tree(\tau)$ the rooted tree representing $\tau$. The subtree of $\tree(\tau)$ rooted at a node $\oplus$ corresponds to a subgraph of $H(n,n)$.  We can give this node a label, say $a$,  so that $\oplus_a$ is the root and $H_a$ the corresponding subgraph of $H(n,n)$. 

We denote by $\oplus_{red}$ and $\oplus_{blue}$ the two children of $\oplus_a$ in $\tree(\tau)$. Let us colour the vertices of $H_{red}$ and $H_{blue}$ red and blue, respectively, and all the other vertices in $H(n,n)$ white. Let $\Colour(v)$ denote the colour of a vertex $v \in H(n,n)$ as described above, and $\Label(v)$ denote the label of vertex $v$ (if any) at node $\oplus_a$. (If $v$ is white it is a vertex of $H(n,n)$ not in subgraph $H_a$ and therefore it has either been created in a branch of $\tree(\tau)$ not yet connected to node $\oplus_a$, or has not yet been created, in which case we say $\Label(v)=\epsilon$).

Our identification of a minimum number of labels needed to construct $H(n,n)$ relies on the following observation regarding this vertex colour partition.

\begin{obs}\label{obs1}
Suppose $u_1$, $u_2$, $w$ are three vertices in $H(n,n)$ such that $u_1$ and $u_2$ are non-white, $ u_1 \sim w$ but $u_2 \not\sim w$, and $\Colour(w)\neq \Colour(u_1)$. Then $u_1$ and $u_2$ must have different labels at node $\oplus_a$.
\end{obs}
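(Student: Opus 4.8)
The plan is to argue by contradiction. Suppose that $u_1$ and $u_2$ carry the \emph{same} label, say $i$, at node $\oplus_a$; I will deduce that $u_2 \sim w$ in $H(n,n)$, contradicting $u_2 \not\sim w$. The first ingredient is a propagation observation. Since $u_1$ and $u_2$ are both non-white, they both lie in $V(H_a) = V(H_{red}) \cup V(H_{blue})$, so on the path in $\tree(\tau)$ from $\oplus_a$ up to the root they sit on the same side of every $\oplus$-node and are acted on by exactly the same sequence of $\rho$- and $\eta$-operations. As relabelling acts only on labels, edge-addition does not change labels, and disjoint union does not change labels, two vertices of $H_a$ that share a label at $\oplus_a$ share a common label at every node of $\tree(\tau)$ at or above $\oplus_a$. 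In particular $u_1$ and $u_2$ always carry equal labels from $\oplus_a$ onwards.

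The second ingredient is to pin down where the edge $u_1w$ is created, and this is the only place the colour hypothesis $\Colour(w)\neq\Colour(u_1)$ is used. Since $u_1 \sim w$ in $H(n,n)$, there is some $\eta_{p,q}$-operation (with $p\neq q$) applied at a node of $\tree(\tau)$ at which $u_1$ and $w$ are both present and carry the labels $p$ and $q$ in some order. I claim such a node can be taken to be at or above $\oplus_a$. If $w$ is white then $w \notin V(H_a)$, so the leaf creating $w$ lies outside the subtree rooted at $\oplus_a$, and hence any node of $\tree(\tau)$ containing both $u_1$ and $w$ is a strict ancestor of $\oplus_a$. If $w$ is non-white then $\Colour(w)\in\{red,blue\}$ and, because $\Colour(w)\neq\Colour(u_1)$ with $u_1$ also non-white, one of $u_1,w$ lies in $H_{red}$ and the other in $H_{blue}$; since these two subtrees are vertex-disjoint and $\oplus_a$ itself adds no edges, the operation creating $u_1w$ must again be applied at or above $\oplus_a$.

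Combining the two ingredients finishes the argument. At the node where the relevant $\eta_{p,q}$ is applied we may assume $\Label(u_1)=p$ and $\Label(w)=q$; but this node lies at or above $\oplus_a$, so by the propagation observation $\Label(u_2)=p$ there as well, and therefore $\eta_{p,q}$ also inserts the edge $u_2w$, which is never deleted, giving $u_2 \sim w$ — the desired contradiction. Hence $u_1$ and $u_2$ have different labels at $\oplus_a$. I expect the only real work to be the careful bookkeeping in the two ingredients: verifying that equal labels at $\oplus_a$ genuinely propagate upward (which hinges on $u_1$ and $u_2$ both lying in $H_a$, so that they are never separated by a $\oplus$ above $\oplus_a$), and verifying that the colour hypothesis forces the edge-creating $\eta$ to sit at or above $\oplus_a$ rather than inside one of the children's subtrees or below it. Everything else is immediate from the semantics of the four clique-width operations.
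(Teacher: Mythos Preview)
Your argument is correct and is precisely the reasoning the paper intends: the paper offers only the one-line justification that ``the edge $u_1w$ still needs to be created, whilst respecting the non-adjacency of $u_2$ and $w$,'' and your proof is a careful unpacking of exactly that sentence. The two ingredients you isolate---label propagation above $\oplus_a$ for vertices in $H_a$, and the colour hypothesis forcing the $\eta$ creating $u_1w$ to lie at or above $\oplus_a$---are what make that one line rigorous.
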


This is true because the edge $u_1w$ still needs to be created, whilst respecting the non-adjacency of $u_2$ and $w$. We now focus on sets of blue and sets of nonblue vertices (Equally, we could have chosen red-nonred). Observation \ref{obs1} leads to the following key lemma which is the basis of much which follows.

\begin{lemma}\label{labels}
For graph $H(n,n)$ let $U$ and $W$ be two disjoint vertex sets with induced subgraph $H=H(n,n)[U \cup W]$ such that $\mu(H,U)=r$. Then if the vertex colouring described above gives $\Colour(u)=$ blue for all $u \in U$ and $\Colour(w)\neq$ blue for all $w \in W$ then the clique-width expression $\tau$ requires at least $r$ labels at node $\oplus_a$.
\end{lemma}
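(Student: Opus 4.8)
The plan is to extract from $U$ a set of $r$ representatives, one from each $(V(H)\setminus U)$-similarity class, and to show that any two of these representatives must already carry distinct labels at the node $\oplus_a$. Since a blue vertex has necessarily been created by the time we reach $\oplus_a$, this forces at least $r$ labels to be in use there, and hence $\tau$ to be at least an $r$-expression.

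In more detail: since $H = H(n,n)[U\cup W]$ we have $V(H)\setminus U = W$, so $\mu(H,U)=r$ means $U$ splits into exactly $r$ classes according to neighbourhood in $W$. Pick representatives $u_1,\dots,u_r \in U$, one from each class, and fix $i\neq j$. Because $u_i$ and $u_j$ lie in different classes, their neighbourhoods in $W$ differ, so some $w\in W$ is adjacent (in $H$, equivalently in $H(n,n)$, as $H$ is induced) to exactly one of them; after possibly swapping $i$ and $j$, assume $u_i \sim w$ and $u_j \not\sim w$. Now $u_i,u_j,w$ are three distinct vertices of $H(n,n)$ — distinctness of $w$ from $u_i,u_j$ follows from $U\cap W=\emptyset$ — and $u_i,u_j$ are non-white since they are blue, while $\Colour(w)\neq\text{blue}=\Colour(u_i)$ by the hypothesis on the colouring. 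These are precisely the hypotheses of Observation~\ref{obs1}, with the roles of $u_1,u_2$ there played by $u_i,u_j$, so $u_i$ and $u_j$ have different labels at $\oplus_a$.

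Since $i,j$ were arbitrary, $u_1,\dots,u_r$ carry pairwise distinct labels at $\oplus_a$. Moreover each $u_i$ is blue, hence lies in $H_{blue}\subseteq H_a$, so it has already been created by the node $\oplus_a$ and therefore $\Label(u_i)\neq\epsilon$. Thus at least $r$ distinct labels occur at $\oplus_a$, and in particular $\tau$ uses at least $r$ labels, as claimed.

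I do not expect a genuine obstacle here: the lemma is essentially a repackaging of Observation~\ref{obs1}. The one point needing a moment's care is the asymmetry in Observation~\ref{obs1} — it demands that the colour clash be between $w$ and the endpoint of the edge that is \emph{present}, which is why we must be free to swap $i$ and $j$ so that $u_i$ is the neighbour of $w$; this is harmless since "different labels" is symmetric in $u_i,u_j$. One should also note that "blue" genuinely entails "already created", so that these representatives really do contribute labels at $\oplus_a$.
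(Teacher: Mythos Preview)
Your proof is correct and follows essentially the same approach as the paper's: choose one representative from each of the $r$ equivalence classes in $U$, use the differing neighbourhoods in $W$ to find a distinguishing $w$ for any pair, and apply Observation~\ref{obs1} to conclude the representatives carry pairwise distinct labels at $\oplus_a$. Your additional remarks about handling the asymmetry in Observation~\ref{obs1} and noting that blue vertices are already created (so their labels are not $\epsilon$) are careful elaborations that the paper leaves implicit.
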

\begin{proof}
Choose one representative vertex from each equivalence class in $U$. For any two such representatives $u_1$ and $u_2$ there must exist a $w$ in $W$ such that $ u_1 \sim w$ but $u_2 \not\sim w$ (or vice versa). By Observation \ref{obs1} $u_1$ and $u_2$ must have different labels at node $\oplus_a$. This applies to any pair of representatives $u_1, u_2$ and hence all $r$ such vertices must have distinct labels.
\end{proof}


Note that from Proposition \ref{prop-pair} a distinguished pairing gives us  the sets $U$ and $W$ required for Lemma \ref {labels}. The following lemmas identify structures in $H(n,n)$ that give us these distinguished pairings. 

We denote by $H_{[y,y+1]}$ the $\alpha_y$-link $H(n,n) \cap C_{[y,y+1]}$ where $y \in [j,j+n-2]$. We will refer to a (adjacent or non-adjacent) \emph{blue-nonblue pair} to mean two vertices, one of which is coloured blue and one non-blue, such that they are in consecutive columns, where the blue vertex could be to the left or the right of the nonblue vertex. If we have a set of such pairs with the blue vertex on the same side (i.e. on the left or right) then we say the pairs in the set have the same \emph{polarity}.

\begin{lemma}\label{lem-blue-blue}
Suppose that $H_{[y,y+1]}$ contains a horizontal pair $(b_1, b_2)$ of blue vertices and at least one nonblue vertex $n_1$, $n_2$ in each column, but not on the top or bottom row (see Figure \ref{fig:blue-blue}).
\begin{enumerate}[label=(\alph*)]
\item If $\alpha_y \in \{0,2,3\}$ then $H_{[y,y+1]}$ contains a non-adjacent blue-nonblue pair.
\item If $\alpha_y \in \{1,2,3\}$ then $H_{[y,y+1]}$ contains an adjacent blue-nonblue pair.
\end{enumerate}
\end{lemma}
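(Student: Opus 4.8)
The plan is to reduce everything to the adjacency pattern governed by $\alpha_y$ and then dispatch the two ``chain'' cases by a short extremal argument. Since a blue--nonblue pair consists of two vertices in \emph{different} columns ($C_y$ and $C_{y+1}$), the only edges of $H_{[y,y+1]}$ that matter are the $\alpha_y$-edges: consecutive columns carry no $\beta$-edges, and $\gamma$-edges lie inside a single column. Recall the four cross-patterns: $v_{y,a}\sim v_{y+1,b}$ iff $a=b$ ($\alpha_y=0$), iff $a\neq b$ ($\alpha_y=1$), iff $a\ge b$ ($\alpha_y=2$), iff $a<b$ ($\alpha_y=3$). As the $\alpha_y=1$ (resp.\ $\alpha_y=3$) pattern is the bipartite complement of the $\alpha_y=0$ (resp.\ $\alpha_y=2$) pattern, and complementing the cross-edges leaves the vertices, rows and colouring alone while exchanging ``adjacent'' and ``non-adjacent'' for every blue--nonblue pair, it is enough to establish: (a) when $\alpha_y=0$, and both (a) and (b) when $\alpha_y=2$. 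The remaining cases follow --- (b) for $\alpha_y=1$ from (a) for $\alpha_y=0$, and (a),(b) for $\alpha_y=3$ from (b),(a) for $\alpha_y=2$ --- and together these are exactly the cases asserted.

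Write $b_1=v_{y,r}$, $b_2=v_{y+1,r}$ for the blue horizontal pair and $n_1=v_{y,s}$, $n_2=v_{y+1,t}$ for the given nonblue vertices, so that $r\neq t$ (since $v_{y+1,r}=b_2$ is blue but $n_2$ is not) and $2\le t\le n-1$; also, every row $1,\dots,n$ of each of $C_y,C_{y+1}$ carries a vertex of $H(n,n)$, which is blue or nonblue. The case $\alpha_y=0$ is then immediate: in a matching, $b_1$ and $n_2$ form a non-adjacent blue--nonblue pair because $r\neq t$.

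For $\alpha_y=2$ I would argue by contradiction. Suppose, for (b), that no blue--nonblue pair between $C_y$ and $C_{y+1}$ is adjacent. Reading off the pattern, this says every blue row of $C_y$ is strictly below every nonblue row of $C_{y+1}$, and every nonblue row of $C_y$ is strictly below every blue row of $C_{y+1}$. From the first fact and the nonblue row $t\le n-1$ of $C_{y+1}$, all blue rows of $C_y$ are $<n$, so row $n$ of $C_y$ is nonblue; the second fact then makes every blue row of $C_{y+1}$ exceed $n$, impossible since $r$ is one. Symmetrically for (a): if every blue--nonblue pair is adjacent, then all blue rows of $C_y$ are $\ge$ all nonblue rows of $C_{y+1}$ and all nonblue rows of $C_y$ are $\ge$ all blue rows of $C_{y+1}$; the nonblue row $t\ge2$ of $C_{y+1}$ forces all blue rows of $C_y$ (in particular $r$) to be $\ge2$, so row $1$ of $C_y$ is nonblue, whence every blue row of $C_{y+1}$ is $\le1$ --- contradicting $r\ge2$.

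The crux is the bookkeeping, and this is exactly where the hypothesis that $n_1,n_2$ avoid the top and bottom rows is indispensable: in each subcase one needs a nonblue vertex \emph{strictly inside} a column to force an extreme row (row $1$ or row $n$) of the neighbouring column to be nonblue, and then that extreme nonblue vertex closes the chain by pushing all blue vertices of the remaining column past an endpoint --- a nonblue vertex sitting exactly on row $n$ of $C_{y+1}$ would not force row $n$ of $C_y$ to be nonblue, and the argument would collapse. If one prefers not to invoke the complementation shortcut, the other four (case, part) pairs can be checked in the same style, choosing row $1$ or row $n$ according to the direction of the inequality; the $\alpha_y\in\{0,1\}$ instances stay trivial, coming from pairing $b_1$ with $n_2$ (or $b_2$ with $n_1$) and using $r\neq t$ (or $r\neq s$).
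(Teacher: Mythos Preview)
Your proof is correct, and takes a genuinely different route from the paper's. The paper argues constructively: for $\alpha_y\in\{2,3\}$ it splits into subcases according to whether the two given nonblue vertices lie on the same side of the blue row or on opposite sides. In the first subcase one of the cross pairs $\{b_1,n_1\}$, $\{b_2,n_2\}$ is adjacent and the other is not; in the second subcase both pairs have the same status, and the paper then appeals to an explicit fifth vertex $s$ (in the column opposite $n_1$, on a suitable row) whose colour --- blue or nonblue --- determines which of $n_1$ or $b_2$ it is paired with to supply the missing type.

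By contrast, you first exploit the bipartite-complement symmetry $0\leftrightarrow1$, $2\leftrightarrow3$ to halve the work, and then for $\alpha_y=2$ run a clean extremal contradiction: assuming no pair of the desired type forces the blue rows of one column entirely past an endpoint, which in turn forces the endpoint row of the other column to be nonblue, and that pushes the blue rows of the remaining column off the grid. This avoids any positional case split and the auxiliary vertex $s$, and it makes transparent exactly where ``not on the top or bottom row'' enters (you need a strict inequality at one end to seed the cascade). The paper's argument, on the other hand, is fully constructive --- it names the pair --- which can be convenient downstream when one wants to track polarities; your proof only guarantees existence. Both arguments are short, but yours is the more uniform of the two. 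One incidental observation: your argument only ever invokes the nonblue vertex $n_2$ in $C_{y+1}$, so in fact you prove the lemma under the slightly weaker hypothesis that just $n_2$ avoids the extreme rows.
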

\begin{proof}
If $\alpha_y = 0$ then both $(b_1,n_1)$ and $(b_2,n_2)$ form a non-adjacent blue-nonblue pair (Figure \ref{fig:blue-blue} A). If $\alpha_y = 1$ then both $(b_1,n_1)$ and $(b_2,n_2)$ form an adjacent blue-nonblue pair (Figure \ref{fig:blue-blue} B).

If $\alpha_y \in \{2,3\}$ and the nonblue vertices $n_1$ and $n_2$ in each column are either both above or both below the horizontal blue pair $(b_1,b_2)$ then it can be seen that one of the pairs $(b_1,n_1)$ or $(b_2,n_2)$ forms an adjacent blue-nonblue pair and the other forms a non-adjacent blue-nonblue pair (Figure \ref{fig:blue-blue} C). If the nonblue vertices in each column are either side of the blue pair (one above and one below) then the pairs $(b_1,n_1)$ and $(b_2,n_2)$ will both be adjacent (or non-adjacent) blue-nonblue pairs (See Figure \ref{fig:blue-blue} D). In this case we need to appeal to a $5$-th vertex $s$ which will form a non-adjacent (or adjacent) set with either $n_1$ or $b_2$ depending on its colour. Thus we always have both a non-adjacent and adjacent blue-non-blue pair when $\alpha_y \in \{2,3\}$.
\end{proof}

\begin{figure}\centering
\begin{tikzpicture}[scale=0.5,
vertex2/.style={circle,draw,minimum size=6,fill=blue},
vertex3/.style={circle,draw,minimum size=6,fill=yellow},
vertex4/.style={circle,draw=white,minimum size=6,fill=white},
vertex5/.style={circle,draw=gray,minimum size=6,fill=gray},]
	
	\begin{scope}[shift={(0,0)}]	
	
	\node (1)[vertex5] at (-10,2) {};
	\node (2)[vertex5] at (-8,2) {};
	\node (3)[vertex5] at (-10,4) {};
	\node (4)[vertex5] at (-8,4) {};
	\node (5)[label={left:$b_1$}][vertex2] at (-10,6) {};
	\node (6)[label={right:$b_2$}][vertex2] at (-8,6) {};
	\node (7)[label={left:$n_2$}][vertex3] at (-10,8) {};
	\node (8)[vertex5] at (-8,8) {};
	\node (9)[vertex5] at (-10,10) {};
	\node (10)[label={right:$n_1$}][vertex3] at (-8,10) {};
	
	\draw [gray](1) -- (2); 
	\draw [gray](3) -- (4);
	\draw [gray](5) -- (6);
	\draw [gray](7) -- (8);
	\draw [gray](9) -- (10);
	
	\node (11)[vertex5] at (-4,2) {};	
	\node (12)[vertex5] at (-2,2) {};
	\node (13)[vertex5] at (-4,4) {};
	\node (14)[vertex5] at (-2,4) {};
	\node (15)[label={left:$b_1$}][vertex2] at (-4,6) {};
	\node (16)[label={right:$b_2$}][vertex2] at (-2,6) {};
	\node (17)[label={left:$n_2$}][vertex3] at (-4,8) {};
	\node (18)[vertex5] at (-2,8) {};
	\node (19)[vertex5] at (-4,10) {};
	\node (20)[label={right:$n_1$}][vertex3] at (-2,10) {};	
	
	\draw [gray](11) -- (14); 
	\draw [gray](11) -- (16); 
	\draw [gray](11) -- (18);
	\draw [gray](11) -- (20);
	\draw [gray](13) -- (12); 
	\draw [gray](13) -- (16); 
	\draw [gray](13) -- (18);
	\draw [gray](13) -- (20);
	\draw [gray](15) -- (12); 
	\draw [gray](15) -- (14); 
	\draw [gray](15) -- (18);
	\draw (15) -- (20);
	\draw [gray](17) -- (12); 
	\draw [gray](17) -- (14); 
	\draw (17) -- (16);
	\draw [gray](17) -- (20);
	\draw [gray](19) -- (12); 
	\draw [gray](19) -- (14); 
	\draw [gray](19) -- (16);
	\draw [gray](19) -- (18);
	
	\node (21)[vertex5] at (2,2) {};	
	\node (22)[vertex5] at (4,2) {};
	\node (23)[vertex5] at (2,4) {};
	\node (24)[vertex5] at (4,4) {};
	\node (25)[label={left:$b_1$}][vertex2] at (2,6) {};
	\node (26)[label={right:$b_2$}][vertex2] at (4,6) {};
	\node (27)[label={left:$n_2$}][vertex3] at (2,8) {};
	\node (28)[vertex5] at (4,8) {};
	\node (29)[vertex5] at (2,10) {};
	\node (30)[label={right:$n_1$}][vertex3] at (4,10) {};
	
	\draw [gray](22) -- (21); 
	\draw [gray](22) -- (23); 
	\draw [gray](22) -- (25);
	\draw [gray](22) -- (27);
	\draw [gray](22) -- (29);
	\draw [gray](24) -- (23);
	\draw [gray](24) -- (25);
	\draw [gray](24) -- (27);
	\draw [gray](24) -- (29);
	\draw [gray](26) -- (25);
	\draw (26) -- (27);
	\draw [gray](26) -- (29);
	\draw [gray](28) -- (27);
	\draw [gray](28) -- (29);
	\draw [gray](30) -- (29);	
	
	\node (31)[label={left:$n_2$}][vertex3] at (8,2) {};
	\node (32)[vertex5] at (10,2) {};
	\node (33)[vertex5] at (8,4) {};
	\node (34)[vertex5] at (10,4) {};
	\node (35)[label={left:$b_1$}][vertex2] at (8,6) {};
	\node (36)[label={right:$b_2$}][vertex2] at (10,6) {};
	\node (37)[vertex5] at (8,8) {};
	\node (38)[label={right:$n_1$}][vertex3] at (10,8) {};
	\node (39)[label={left:$s$}][vertex5] at (8,10) {};
	\node (40)[vertex5] at (10,10) {};

	\draw [gray](31) -- (34);
	\draw (31) -- (36); 
	\draw [gray](31) -- (38);
	\draw [gray](31) -- (40);
	\draw [gray](33) -- (36);
	\draw [gray](33) -- (38);
	\draw [gray](33) -- (40);
	\draw (35) -- (38);
	\draw [gray](35) -- (40);
	\draw [gray](37) -- (40);

\node (101)[label={below:A. $0$-link}] [vertex4] at (-9,1) {};
\node (102)[label={below:B. $1$-link}] [vertex4] at (-3,1) {};	
\node (103)[label={below:C. $2$-link}] [vertex4] at (3,1) {};		
\node (104)[label={below:D. $3$-link}] [vertex4] at (9,1) {};
	
	\end{scope}
		
\end{tikzpicture}

\caption{Horizontal blue-blue pair in $H_{[y,y+1]}$ (nonblue vertices in yellow)}
	\label{fig:blue-blue}

\end{figure}
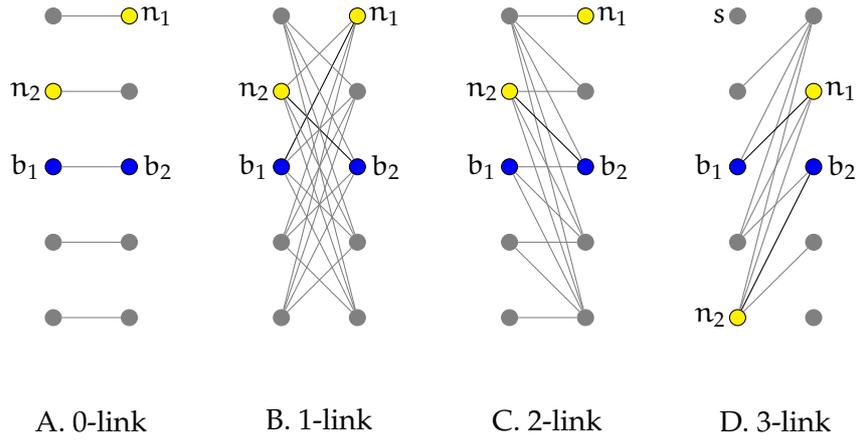


\begin{lemma}\label{lem-blue-nonblue2}
Suppose $H_{[y,y+1]}$ contains a horizontal blue-nonblue pair of vertices $(b_1,n_1)$, not the top or bottom row, and at least one nonblue vertex $n_2$ in the same column as $b_1$. Then $H_{[y,y+1]}$ contains both an adjacent and a non-adjacent blue-nonblue pair of vertices, irrespective of the value of $\alpha_y$ (see Figure \ref{fig:blue-nonblue2}).
\end{lemma}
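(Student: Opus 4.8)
The plan is to reduce the whole statement to one purely combinatorial fact about the colouring, exploiting that $H_{[y,y+1]}$ is a complete two-column grid on $n$ rows with no bonds between its two columns, so the only edges between $C_y$ and $C_{y+1}$ are the $\alpha_y$-edges. Fix an embedding, let $C_y$ be the left and $C_{y+1}$ the right column, and write $b_1=v_{y,r}$ and $n_1=v_{y+1,r}$ for the given horizontal pair in row $r$ (the mirror case $b_1\in C_{y+1}$, $n_1\in C_y$ is handled the same way, see below), and $n_2=v_{y,r'}$ with $r'\neq r$. Since $\{b_1,n_1\}$ lies in neither the top nor the bottom row, $2\le r\le n-1$; in particular $n\ge 3$ and the rows $r-1$ and $r+1$ exist.

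The first step is to observe, straight from the definitions of the four $\alpha$-edge types, how adjacency between two vertices in consecutive columns depends only on $\alpha_y$ and on the comparison of their row indices. A horizontal pair $\{v_{y,j},v_{y+1,j}\}$ is an edge precisely when $\alpha_y\in\{0,2\}$ and a non-edge precisely when $\alpha_y\in\{1,3\}$; whereas a \emph{rising} pair $\{v_{y,j},v_{y+1,k}\}$ with $j<k$ is an edge precisely when $\alpha_y\in\{1,3\}$ and a non-edge precisely when $\alpha_y\in\{0,2\}$. Hence the given pair $\{b_1,n_1\}$ is already a blue-nonblue pair of one of the two required kinds, and it now suffices to produce one blue-nonblue pair $\{v_{y,j},v_{y+1,k}\}$ with $j<k$: such a pair is automatically of the other kind, for every value of $\alpha_y$ at once. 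This is where the words ``irrespective of $\alpha_y$'' get used up; after it, $\alpha_y$ plays no further role.

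It remains to find such a pair, which is a statement about the colouring only. Consider the nonempty set of vertices of $C_{y+1}$ in rows $r+1,\dots,n$. If one of them, say $v_{y+1,k}$, is nonblue, then $\{b_1,v_{y+1,k}\}$ works, as $r<k$. Otherwise every vertex of $C_{y+1}$ above row $r$ is blue; now consider the nonempty set of vertices of $C_y$ in rows $1,\dots,r-1$. If one of them, say $v_{y,j}$, is blue, then $\{v_{y,j},n_1\}$ works, as $j<r$. Otherwise every vertex of $C_y$ below row $r$ is nonblue, so $v_{y,r-1}$ is nonblue while $v_{y+1,r+1}$ is blue, and $\{v_{y,r-1},v_{y+1,r+1}\}$ is the desired pair since $r-1<r+1$. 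The mirror case $b_1\in C_{y+1}$ is identical after swapping the two sides: look for a nonblue vertex of $C_y$ below row $r$ to pair with $b_1$, or a blue vertex of $C_{y+1}$ above row $r$ to pair with $n_1$, or else use $\{v_{y,r-1},v_{y+1,r+1}\}$ once more. (The extra nonblue vertex $n_2$ in $b_1$'s column gives further slack and can shortcut some subcases, but the argument above does not actually need it.)

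I do not anticipate a real obstacle. The only place requiring care is the last subcase, where failure of the two obvious trial pairs forces a whole column-segment to be monochromatic and one is pushed to take one vertex immediately below row $r$ and one immediately above it; this is exactly why $\{b_1,n_1\}$ is assumed to avoid the top and bottom rows. The other thing to get exactly right, since the whole reduction rests on it, is the adjacency bookkeeping in the second paragraph — that a rising pair is always of the kind opposite to a horizontal pair — which is a four-line check against the definitions of the $0$-, $1$-, $2$- and $3$-links.
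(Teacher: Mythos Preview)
Your proof is correct, and it follows a genuinely different line from the paper's. The paper argues by splitting on $\alpha_y\in\{0,2\}$ versus $\alpha_y\in\{1,3\}$: in each case the horizontal pair $\{b_1,n_1\}$ already supplies one of the two required kinds, and the other kind is obtained by choosing an auxiliary vertex $s$ in $n_1$'s column so that, depending on the colour of $s$, either $\{b_1,s\}$ or $\{s,n_2\}$ is a blue--nonblue pair of the missing kind. The hypothesis on $n_2$ is used directly there.

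You instead make a single structural observation --- that a horizontal pair and a rising pair $\{v_{y,j},v_{y+1,k}\}$ with $j<k$ always have opposite adjacency status, for every $\alpha_y$ simultaneously --- and then reduce everything to finding one rising blue--nonblue pair, which you do by a short colour-only pigeonhole on the segments of $C_y$ below row $r$ and $C_{y+1}$ above row $r$. This is cleaner, avoids the four-way case split, and (as you note) shows that the hypothesis on $n_2$ is in fact superfluous. The paper's argument is more pictorial and ties closely to the figure; yours is more uniform and slightly stronger. Both are short, and your adjacency bookkeeping in the second paragraph is exactly right.
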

\begin{proof}
If $\alpha_y \in \{0,2\}$ then the horizontal blue-nonblue pair $(b_1,n_1)$ is adjacent, and given a nonblue vertex $n_2$ in the same column as $b_1$, we can find a vertex $s$ in the same column as $n_1$ that forms a non-adjacent pairing with either $b_1$ or $n_2$ depending on its colour (See Figure \ref{fig:blue-nonblue2} A and C). If $\alpha_y \in \{1,3\}$ then the horizontal blue-nonblue pair $(b_1,n_1)$ is non-adjacent, and given a nonblue vertex $n_2$ in the same column as $b_1$, we can find a vertex $s$ in the same column as $n_1$ that forms an adjacent pairing with either $b_1$ or $n_2$ depending on its colour (See Figure \ref{fig:blue-nonblue2} B and D).
\end{proof}

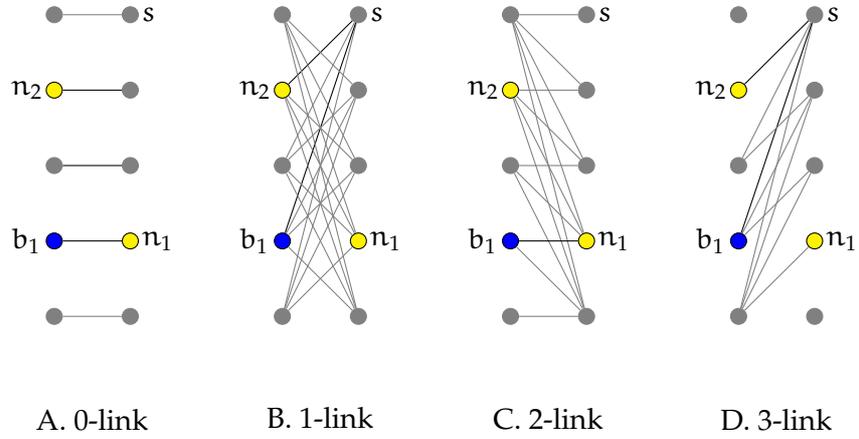
\begin{figure}\centering
\begin{tikzpicture}[scale=0.5,
vertex2/.style={circle,draw,minimum size=6,fill=blue},
vertex3/.style={circle,draw,minimum size=6,fill=yellow},
vertex4/.style={circle,draw=white,minimum size=6,fill=white},
vertex5/.style={circle,draw=gray,minimum size=6,fill=gray},]
		
	\begin{scope}[shift={(0,0)}]		
	
	\node (1)[vertex5] at (-10,2) {};
	\node (2)[vertex5] at (-8,2) {};
	\node (3)[label={left:$b_1$}][vertex2] at (-10,4) {};
	\node (4)[label={right:$n_1$}][vertex3] at (-8,4) {};
	\node (5)[vertex5] at (-10,6) {};
	\node (6)[vertex5] at (-8,6) {};
	\node (7)[label={left:$n_2$}][vertex3] at (-10,8) {};
	\node (8)[vertex5] at (-8,8) {};
	\node (9)[vertex5] at (-10,10) {};
	\node (10)[label={right:$s$}][vertex5] at (-8,10) {};
	
	\draw [gray](1) -- (2); 
	\draw (3) -- (4);
	\draw (5) -- (6);
	\draw (7) -- (8);
	\draw [gray](9) -- (10);
	
	\node (11)[vertex5] at (-4,2) {};	
	\node (12)[vertex5] at (-2,2) {};
	\node (13)[label={left:$b_1$}][vertex2] at (-4,4) {};
	\node (14)[label={right:$n_1$}][vertex3] at (-2,4) {};
	\node (15)[vertex5] at (-4,6) {};
	\node (16)[vertex5] at (-2,6) {};
	\node (17)[label={left:$n_2$}][vertex3] at (-4,8) {};
	\node (18)[vertex5] at (-2,8) {};
	\node (19)[vertex5] at (-4,10) {};
	\node (20)[label={right:$s$}][vertex5] at (-2,10) {};	
	
	\draw [gray](11) -- (14); 
	\draw [gray](11) -- (16); 
	\draw [gray](11) -- (18);
	\draw [gray](11) -- (20);
	\draw [gray](13) -- (12); 
	\draw [gray](13) -- (16); 
	\draw [gray](13) -- (18);
	\draw (13) -- (20);
	\draw [gray](15) -- (12); 
	\draw [gray](15) -- (14); 
	\draw [gray](15) -- (18);
	\draw [gray](15) -- (20);
	\draw [gray](17) -- (12); 
	\draw [gray](17) -- (14); 
	\draw [gray](17) -- (16);
	\draw (17) -- (20);
	\draw [gray](19) -- (12); 
	\draw [gray](19) -- (14); 
	\draw [gray](19) -- (16);
	\draw [gray](19) -- (18);
	
	\node (21)[vertex5] at (2,2) {};	
	\node (22)[vertex5] at (4,2) {};
	\node (23)[label={left:$b_1$}][vertex2] at (2,4) {};
	\node (24)[label={right:$n_1$}][vertex3] at (4,4) {};
	\node (25)[vertex5] at (2,6) {};
	\node (26)[vertex5] at (4,6) {};
	\node (27)[label={left:$n_2$}][vertex3] at (2,8) {};
	\node (28)[vertex5] at (4,8) {};
	\node (29)[vertex5] at (2,10) {};
	\node (30)[label={right:$s$}][vertex5] at (4,10) {};
	
	\draw [gray](22) -- (21); 
	\draw [gray](22) -- (23); 
	\draw [gray](22) -- (25);
	\draw [gray](22) -- (27);
	\draw [gray](22) -- (29);
	\draw (24) -- (23);
	\draw [gray](24) -- (25);
	\draw [gray](24) -- (27);
	\draw [gray](24) -- (29);
	\draw [gray](26) -- (25);
	\draw [gray](26) -- (27);
	\draw [gray](26) -- (29);
	\draw [gray](28) -- (27);
	\draw [gray](28) -- (29);
	\draw [gray](30) -- (29);	
	
	\node (31)[vertex5] at (8,2) {};
	\node (32)[vertex5] at (10,2) {};
	\node (33)[label={left:$b_1$}][vertex2] at (8,4) {};
	\node (34)[label={right:$n_1$}][vertex3] at (10,4) {};
	\node (35)[vertex5] at (8,6) {};
	\node (36)[vertex5] at (10,6) {};
	\node (37)[label={left:$n_2$}][vertex3] at (8,8) {};
	\node (38)[vertex5] at (10,8) {};
	\node (39)[vertex5] at (8,10) {};
	\node (40)[label={right:$s$}][vertex5] at (10,10) {};

	\draw [gray](31) -- (34);
	\draw [gray](31) -- (36); 
	\draw [gray](31) -- (38);
	\draw [gray](31) -- (40);
	\draw [gray](33) -- (36);
	\draw [gray](33) -- (38);
	\draw (33) -- (40);
	\draw [gray](35) -- (38);
	\draw [gray](35) -- (40);
	\draw (37) -- (40);

\node (101)[label={below:A. $0$-link}] [vertex4] at (-9,1) {};
\node (102)[label={below:B. $1$-link}] [vertex4] at (-3,1) {};	
\node (103)[label={below:C. $2$-link}] [vertex4] at (3,1) {};		
\node (104)[label={below:D. $3$-link}] [vertex4] at (9,1) {};
	
	\end{scope}

\end{tikzpicture}

\caption{Horizontal blue-nonblue pair in $H_{[y,y+1]}$ (nonblue vertices in yellow)}
	\label{fig:blue-nonblue2}

\end{figure}


\begin{lemma}\label{lem-blue-nonblue}
Suppose $H_{[y,y+1]}$ contains $r \ge 3$ horizontal blue-nonblue pairs of vertices $(b_i,n_i)$, $i=1,\dots,r$, with the same polarity (see Figure \ref{fig:blue-nonblue}). Then, irrespective of the value of $\alpha_y$, it contains 
\begin{enumerate}[label=(\alph*)]
\item a matched distinguished pairing $\{U,W\}$  of size $r-1$ such that $\Colour(u)=$ blue for all $u \in U$ and $\Colour(w)\neq$ blue for all $w \in W$, and 
\item an unmatched distinguished pairing $\{U^\prime,W^\prime\}$ of size $r-1$ such that $\Colour(u^\prime)=$ blue for all $u^\prime \in U^\prime$ and $\Colour(w^\prime)\neq$ blue for all $w^\prime \in W^\prime$.  
\end{enumerate}
\end{lemma}
\begin{proof}
This is easily observable from Figure \ref{fig:blue-nonblue} for $r=3$. If we set $U=\{b_1,b_2\}$, $W=\{n_1,n_2\}$, $U^\prime=\{b_2,b_3\}$ and $W^\prime=\{n_1,n_2\}$ then one of $\{U,W\}$ and $\{U^\prime,W^\prime\}$ is a matched distinguished pairing of size $2$ and the other is an unmatched distinguished pairing of size $2$, irrespective of the value of $\alpha_y$. Simple induction establishes this for all $r \ge 3$. 
\end{proof}

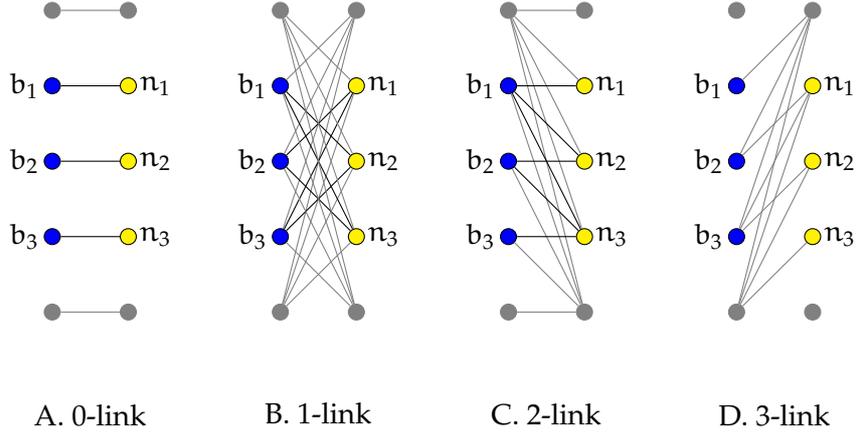
\begin{figure}\centering
\begin{tikzpicture}[scale=0.5,
vertex2/.style={circle,draw,minimum size=6,fill=blue},
vertex3/.style={circle,draw,minimum size=6,fill=yellow},
vertex4/.style={circle,draw=white,minimum size=6,fill=white},
vertex5/.style={circle,draw=gray,minimum size=6,fill=gray},]
		
	\begin{scope}[shift={(0,0)}]		
	
	\node (1)[vertex5] at (-10,2) {};
	\node (2)[vertex5] at (-8,2) {};
	\node (3)[label={left:$b_3$}][vertex2] at (-10,4) {};
	\node (4)[label={right:$n_3$}][vertex3] at (-8,4) {};
	\node (5)[label={left:$b_2$}][vertex2] at (-10,6) {};
	\node (6)[label={right:$n_2$}][vertex3] at (-8,6) {};
	\node (7)[label={left:$b_1$}][vertex2] at (-10,8) {};
	\node (8)[label={right:$n_1$}][vertex3] at (-8,8) {};
	\node (9)[vertex5] at (-10,10) {};
	\node (10)[vertex5] at (-8,10) {};
	
	\draw [gray](1) -- (2); 
	\draw (3) -- (4);
	\draw (5) -- (6);
	\draw (7) -- (8);
	\draw [gray](9) -- (10);
	
	\node (11)[vertex5] at (-4,2) {};	
	\node (12)[vertex5] at (-2,2) {};
	\node (13)[label={left:$b_3$}][vertex2] at (-4,4) {};
	\node (14)[label={right:$n_3$}][vertex3] at (-2,4) {};
	\node (15)[label={left:$b_2$}][vertex2] at (-4,6) {};
	\node (16)[label={right:$n_2$}][vertex3] at (-2,6) {};
	\node (17)[label={left:$b_1$}][vertex2] at (-4,8) {};
	\node (18)[label={right:$n_1$}][vertex3] at (-2,8) {};
	\node (19)[vertex5] at (-4,10) {};
	\node (20)[vertex5] at (-2,10) {};	
	
	\draw [gray](11) -- (14); 
	\draw [gray](11) -- (16); 
	\draw [gray](11) -- (18);
	\draw [gray](11) -- (20);
	\draw [gray](13) -- (12); 
	\draw (13) -- (16); 
	\draw (13) -- (18);
	\draw [gray](13) -- (20);
	\draw [gray](15) -- (12); 
	\draw (15) -- (14); 
	\draw (15) -- (18);
	\draw [gray](15) -- (20);
	\draw [gray](17) -- (12); 
	\draw (17) -- (14); 
	\draw (17) -- (16);
	\draw [gray](17) -- (20);
	\draw [gray](19) -- (12); 
	\draw [gray](19) -- (14); 
	\draw [gray](19) -- (16);
	\draw [gray](19) -- (18);
	
	\node (21)[vertex5] at (2,2) {};	
	\node (22)[vertex5] at (4,2) {};
	\node (23)[label={left:$b_3$}][vertex2] at (2,4) {};
	\node (24)[label={right:$n_3$}][vertex3] at (4,4) {};
	\node (25)[label={left:$b_2$}][vertex2] at (2,6) {};
	\node (26)[label={right:$n_2$}][vertex3] at (4,6) {};
	\node (27)[label={left:$b_1$}][vertex2] at (2,8) {};
	\node (28)[label={right:$n_1$}][vertex3] at (4,8) {};
	\node (29)[vertex5] at (2,10) {};
	\node (30)[vertex5] at (4,10) {};
	
	\draw [gray](22) -- (21); 
	\draw [gray](22) -- (23); 
	\draw [gray](22) -- (25);
	\draw [gray](22) -- (27);
	\draw [gray](22) -- (29);
	\draw (24) -- (23);
	\draw (24) -- (25);
	\draw (24) -- (27);
	\draw [gray](24) -- (29);
	\draw (26) -- (25);
	\draw (26) -- (27);
	\draw [gray](26) -- (29);
	\draw (28) -- (27);
	\draw [gray](28) -- (29);
	\draw [gray](30) -- (29);	
	
	\node (31)[vertex5] at (8,2) {};
	\node (32)[vertex5] at (10,2) {};
	\node (33)[label={left:$b_3$}][vertex2] at (8,4) {};
	\node (34)[label={right:$n_3$}][vertex3] at (10,4) {};
	\node (35)[label={left:$b_2$}][vertex2] at (8,6) {};
	\node (36)[label={right:$n_2$}][vertex3] at (10,6) {};
	\node (37)[label={left:$b_1$}][vertex2] at (8,8) {};
	\node (38)[label={right:$n_1$}][vertex3] at (10,8) {};
	\node (39)[vertex5] at (8,10) {};
	\node (40)[vertex5] at (10,10) {};

	\draw [gray](31) -- (34);
	\draw [gray](31) -- (36); 
	\draw [gray](31) -- (38);
	\draw [gray](31) -- (40);
	\draw [gray](33) -- (36);
	\draw [gray](33) -- (38);
	\draw [gray](33) -- (40);
	\draw [gray](35) -- (38);
	\draw [gray](35) -- (40);
	\draw [gray](37) -- (40);

\node (101)[label={below:A. $0$-link}] [vertex4] at (-9,1) {};
\node (102)[label={below:B. $1$-link}] [vertex4] at (-3,1) {};	
\node (103)[label={below:C. $2$-link}] [vertex4] at (3,1) {};		
\node (104)[label={below:D. $3$-link}] [vertex4] at (9,1) {};
	
	\end{scope}

\end{tikzpicture}

\caption{$3$ horizontal blue-nonblue pairs in $H_{[y,y+1]}$ (nonblue vertices in yellow)}
	\label{fig:blue-nonblue}

\end{figure}


In Lemmas \ref{lem-blue-blue}, \ref{lem-blue-nonblue2} and \ref{lem-blue-nonblue} we identified blue-nonblue pairs within a particular link $H_{[y,y+1]}$. The next two lemmas identify distinguished pairings across link-sets. Let $P \subset [j,j+n-2]$ be a couple set (see definition on page \pageref{dense-sparse}) of size $r$ with corresponding $\alpha_y$-links $H_{[y,y+1]} \le H(n,n)$ for each $y \in P$. 

\begin{lemma}\label{lem:beta-not-dense}
If $\beta$ is not dense in $P$ and each $H_{[y,y+1]}$ for $y \in P$ has an adjacent blue-nonblue pair with the same polarity, then we can combine these pairs to form a matched distinguished pairing $\{U,W\}$ of size $r$ where the vertices of $U$ are blue and the vertices of $W$ nonblue.
\end{lemma}
\begin{proof}
Suppose $s,t \in P$ such that $(v_s,v_{s+1})$ and $(v_t,v_{t+1})$ are two adjacent blue-nonblue pairs in different links, with $v_s, v_t \in U$ and $v_{s+1}, v_{t+1} \in W$. Consider the two possible $\beta$ bonds  $(v_s,v_{t+1})$ and $(v_{s+1},v_t)$. If neither of these bonds exist then $v_s$ is distinguished from $v_t$ by both $v_{s+1}$ and $v_{t+1}$ (see Figure \ref{fig:beta-not-dense}  $(i)$). If one of these bonds exists then $v_s$ is distinguished from $v_t$ by either $v_{s+1}$ or $v_{t+1}$ (see Figure \ref{fig:beta-not-dense} $(ii)$ and $(iii)$). Both bonds cannot exist as $\beta$ is not dense in $P$. Note that the bonds $(v_s,v_t)$ and $(v_{s+1},v_{t+1})$ are not relevant in distinguishing $v_s$ from $v_t$ since, if they exist, they connect blue to blue and nonblue to nonblue.

So any two blue vertices $v_s, v_t \in U$ are distinguished by the two nonblue vertices $v_{s+1}, v_{t+1} \in W$ and hence $\{U,W\}$ is a matched distinguished pairing of size $r$.
\end{proof}

\begin{figure}\centering
\begin{tikzpicture}[scale=0.5,
	vertex2/.style={circle,draw,minimum size=6,fill=blue},
	vertex3/.style={circle,draw,minimum size=6,fill=yellow},
	vertex4/.style={circle,draw=white,minimum size=6,fill=white},]
	
	\node (1)[label={above:$v_s$}] [vertex2] at (-14,3) {};
	\node (2)[label={below:$v_{s+1}$}][vertex3] at (-12,2) {};
	\node (3)[label={above:$v_t$}][vertex2] at (-10,5) {};
	\node (4)[label={below:$v_{t+1}$}][vertex3] at (-8,4) {};
	\node (5)[label={above:$v_s$}] [vertex2] at (-4,3) {};
	\node (6)[label={below:$v_{s+1}$}][vertex3] at (-2,2) {};
	\node (7)[label={above:$v_t$}][vertex2] at (0,5) {};
	\node (8)[label={below:$v_{t+1}$}][vertex3] at (2,4) {};		
	\node (9)[label={above:$v_s$}] [vertex2] at (6,3) {};
	\node (10)[label={below:$v_{s+1}$}][vertex3] at (8,2) {};
	\node (11)[label={above:$v_t$}][vertex2] at (10,5) {};
	\node (12)[label={below:$v_{t+1}$}][vertex3] at (12,4) {};
	
	\draw (1) -- (2); \draw (3) -- (4);
	\draw (7) -- (8) -- (5) -- (6);
	\draw (12) -- (11) -- (10) -- (9);
	
	\node (201)[label={below:$(i)$}] [vertex4] at (-11,1) {};
	\node (202)[label={below:$(ii)$}] [vertex4] at (-1,1) {};	
	\node (203)[label={below:$(iii)$}] [vertex4] at (9,1) {};	
			
	\end{tikzpicture}

\caption{Adjacent blue-nonblue vertex pairs, $\beta$ not dense (nonblue vertices in yellow)}
	\label{fig:beta-not-dense}

\end{figure}
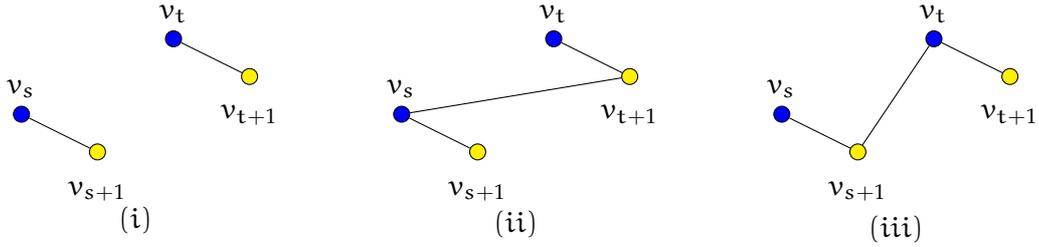


\begin{lemma}\label{lem:beta-not-sparse}
If $\beta$ is not sparse in $P$ and each $H_{[y,y+1]}$ has a non-adjacent blue-nonblue pair with the same polarity, then we can combine these pairs to form an unmatched distinguished pairing $\{U,W\}$ of size $r$ where the vertices of $U$ are blue and the vertices of $W$ nonblue.
\end{lemma}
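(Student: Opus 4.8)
The plan is to mirror, almost verbatim, the proof of Lemma~\ref{lem:beta-not-dense}, interchanging the roles of ``adjacent''/``non-adjacent'' and of ``dense''/``sparse''. First I would fix, for each $y \in P$, one non-adjacent blue--nonblue pair in $H_{[y,y+1]}$; by hypothesis these all share a polarity, so after possibly reflecting left--right we may assume the blue vertex of the pair chosen in $H_{[y,y+1]}$ lies in $C_y$ and the nonblue vertex lies in $C_{y+1}$ (in the opposite polarity the two ``cross'' bonds below are the same pair of bonds, so this costs nothing). Write this pair as $(v_y,v_{y+1})$ with $v_y \not\sim v_{y+1}$, and set $U = \{v_y : y \in P\}$ (all blue) and $W = \{v_{y+1} : y \in P\}$ (all nonblue). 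Since $P$ is a couple set, $|x-y|>2$ for distinct $x,y \in P$, so all the chosen vertices are distinct and $U \cap W = \emptyset$, giving $|U|=|W|=r$. Pairing $v_y$ with $v_{y+1}$ yields $v_y \not\sim v_{y+1}$ for every $y$, so once $\{U,W\}$ is shown to be a distinguished pairing it is automatically unmatched.

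The core step is to show that any two vertices $v_s, v_t \in U$ with $s<t$ (both in $P$) have different neighbourhoods inside $W$. Because $|s-t|>2$, the columns $s,s+1,t,t+1$ are pairwise at distance $>1$ except for the consecutive pairs $\{s,s+1\}$ and $\{t,t+1\}$; hence (these being different columns, ruling out $\gamma$-edges, and not adjacent columns, ruling out $\alpha$-edges) the only possible edges among $\{v_s,v_{s+1},v_t,v_{t+1}\}$ besides the two absent link-edges are $\beta$-bonds. The bonds $(s,t)$ and $(s+1,t+1)$ join blue-to-blue and nonblue-to-nonblue, so they play no role in distinguishing $v_s$ from $v_t$ within $W$; the relevant bonds are the cross bonds $(s,t+1)$ and $(s+1,t)$. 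Since $\beta$ is not sparse in $P$, the pair $\{s,t\}$ is not $\beta$-sparse, so at least one of $(s,t+1)$, $(s+1,t)$ lies in $\beta$. If $(s,t+1)\in\beta$ then $v_s \sim v_{t+1}$ while $v_t \not\sim v_{t+1}$, so $v_{t+1}\in W$ distinguishes $v_s$ from $v_t$; symmetrically, if $(s+1,t)\in\beta$ then $v_{s+1}\in W$ distinguishes them (and if both cross bonds are present, either nonblue vertex does). In every case $v_s$ and $v_t$ have different neighbourhoods in $W$, so $\{U,W\}$ is a distinguished pairing of size $r$, hence an unmatched distinguished pairing, as claimed. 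A figure analogous to Figure~\ref{fig:beta-not-dense}, displaying the configurations with exactly $(s,t+1)$, exactly $(s+1,t)$, and both cross bonds present, would make this transparent.

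The only point that needs care — a mild one — is the bookkeeping that nothing outside the $\beta$-bonds interferes: that $v_s$ sees $v_{s+1}, v_{t+1}$ (and $v_t$ sees $v_{s+1}, v_{t+1}$) only through bonds, which is exactly where the couple-set hypothesis $|s-t|>2$ is used, and that ``$\beta$ not sparse in $P$'' does force a cross bond for every relevant pair, which is just the definition unwound. Both are immediate, so the argument closes with no real obstacle; the whole content is the observation that the non-sparse condition guarantees, for each pair of links, at least one cross bond, and a non-adjacent pair is distinguished precisely by such a bond.
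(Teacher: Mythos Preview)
Your proposal is correct and takes essentially the same approach as the paper, which simply states that the proof ``is very similar to the proof of Lemma~\ref{lem:beta-not-dense}'' and refers to a figure; you have spelled out exactly that mirrored argument in full detail, with the non-sparse hypothesis supplying at least one cross bond to distinguish each pair of blue vertices.
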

\begin{proof}
This is very similar to the proof of Lemma \ref{lem:beta-not-dense} and is demonstrated in Figure \ref{fig:beta-not-sparse}.
\end{proof}

\begin{figure}\centering
\begin{tikzpicture}[scale=0.5,
	vertex2/.style={circle,draw,minimum size=6,fill=blue},
	vertex3/.style={circle,draw,minimum size=6,fill=yellow},
	vertex4/.style={circle,draw=white,minimum size=6,fill=white},]	
	
	\node (21)[label={above:$v_s$}] [vertex2] at (-14,3) {};
	\node (22)[label={below:$v_{s+1}$}][vertex3] at (-12,2) {};
	\node (23)[label={above:$v_t$}][vertex2] at (-10,5) {};
	\node (24)[label={below:$v_{t+1}$}][vertex3] at (-8,4) {};
	\node (25)[label={above:$v_s$}] [vertex2] at (-4,3) {};
	\node (26)[label={below:$v_{s+1}$}][vertex3] at (-2,2) {};
	\node (27)[label={above:$v_t$}][vertex2] at (0,5) {};
	\node (28)[label={below:$v_{t+1}$}][vertex3] at (2,4) {};
	\node (29)[label={above:$v_s$}][vertex2] at (6,3) {};
	\node (30)[label={below:$v_{s+1}$}][vertex3] at (8,2) {};
	\node (31)[label={above:$v_t$}][vertex2] at (10,5) {};
	\node (32)[label={below:$v_{t+1}$}][vertex3] at (12,4) {};
			
	\draw (21) -- (24); \draw (22) -- (23); 
	\draw (25) -- (28);
	\draw (30) -- (31);	
	
	\node (206)[label={below:$(i)$}] [vertex4] at (-11,1) {};
	\node (207)[label={below:$(ii)$}] [vertex4] at (-1,1) {};
	\node (208)[label={below:$(iii)$}] [vertex4] at (9,1) {};

\end{tikzpicture}

\caption{Non-adjacent blue-nonblue vertex pairs, $\beta$ not sparse (nonblue vertices in yellow)}
	\label{fig:beta-not-sparse}

\end{figure}
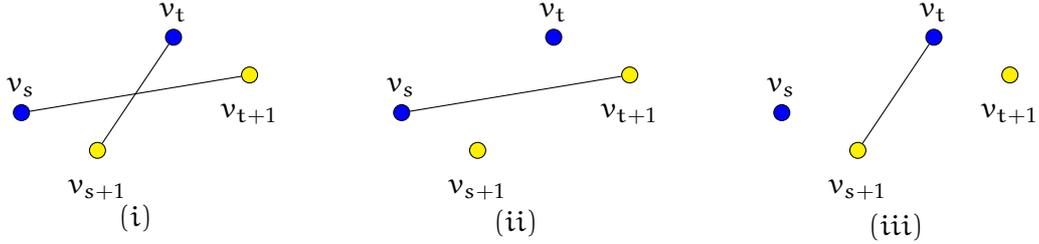
%
%
%
%
\subsection{Two colour partition cases to consider}\label{two_cases}

Having identified structures that give us a lower bound on labels required for a clique-width expression for $H(n,n)$, we now apply this knowledge to the following subtree of $\tree(\tau)$.

Let $\oplus_a$ be the lowest node in $\tree(\tau)$ such that $H_a$  contains all the vertices in rows $2$ to $(n-1)$ in some column of $H(n,n)$. We reserve rows $1$ and $n$ so that we may apply Lemmas \ref{lem-blue-blue} and \ref{lem-blue-nonblue2}.

Thus $H(n,n)$ contains at least one column where vertices in rows $2$ to $(n-1)$ are non-white but no column has entirely blue or red vertices in rows $2$ to $(n-1)$ because otherwise $\oplus_a$  would not be the lowest node in $\tree(\tau)$ such that $H_a$  contains all the vertices in rows $2$ to $(n-1)$ in some column of $H(n,n)$. Let $C_b$ be a non-white column. Without loss of generality we can assume that the number of blue vertices in column $C_b$ between rows $2$ and $(n-1)$ is at least $(n/2)-1$ otherwise we could swap red for blue.

Now consider rows $2$ to $(n-1)$. We have two possible cases:

\begin{description}
\item [Case 1] Either none of the rows with a blue vertex in column $C_b$ has blue vertices in every column to the right of $C_b$, or none of the rows with a blue vertex in column $C_b$ has blue vertices in every column to the left of $C_b$. Hence, we have at least $\lceil n/2 \rceil -1$ rows that have a horizontal blue-nonblue pair with the same polarity.
\item [Case 2] One row $R_r$ has a blue vertex in column $C_b$ and blue vertices in every column to the right of $C_b$ and one row $R_l$ has a blue vertex in column $C_b$ and blue vertices in every column to the left of $C_b$. Hence, either on row $R_r$ or row $R_l$, we must have  a horizontal set of consecutive blue vertices of size at least $\lceil n/2 \rceil +1$. 
\end{description}

To prove unboundedness of clique-width we will show that for any $r \in \mathbb{N}$ we can find an $n \in \mathbb{N}$ so that any clique-width expression $\tau$ for $H(n,n)$ requires at least $r$ labels in $\tree(\tau)$, whether this is a 'Case 1' or 'Case 2' scenario.   

To address both cases we will need the following classic result:

\begin{thm}[Ramsey~\cite{ramsey:ramsey-theory:} and Diestel~\cite{diestel:graph-theory5:}] \label{Ramsey}
For every $r \in \mathbb{N}$, every graph of order at least $2^{2r-3}$ contains either $K^r$ or  $\overline{K^r}$ as an induced subgraph.
\end{thm}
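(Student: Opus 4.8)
The plan is to prove the standard quantitative form of Ramsey's theorem via the off-diagonal recurrence and then extract the bound $2^{2r-3}$ from an elementary binomial estimate. For natural numbers $s,t$, let $R(s,t)$ denote the least integer $N$ (to be shown finite) such that every graph on at least $N$ vertices contains an induced $K^s$ or an induced $\overline{K^t}$; since every clique and every independent set is automatically an induced subgraph, this is the same as asking merely for a complete or empty subgraph, so nothing is lost in the word ``induced''. The goal then reduces to showing $R(r,r)\le 2^{2r-3}$.

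First I would record the base cases $R(1,t)=R(s,1)=1$ (a single vertex is at once a $K^1$ and a $\overline{K^1}$) and establish the key recurrence $R(s,t)\le R(s-1,t)+R(s,t-1)$ for $s,t\ge 2$. For the recurrence, take a graph $G$ on $N=R(s-1,t)+R(s,t-1)$ vertices, fix any vertex $v$, and partition the remaining $N-1$ vertices into its neighbourhood $A$ and its non-neighbourhood $B$. Since $|A|+|B|=R(s-1,t)+R(s,t-1)-1$, at least one of $|A|\ge R(s-1,t)$ or $|B|\ge R(s,t-1)$ must hold. In the first case $G[A]$ contains either $\overline{K^t}$ (and we are done) or $K^{s-1}$, which together with $v$ yields $K^s$; the second case is symmetric with the roles of cliques and independent sets interchanged. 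This simultaneously shows each $R(s,t)$ is finite.

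Next, by induction on $s+t$ together with Pascal's identity $\binom{s+t-3}{s-2}+\binom{s+t-3}{s-1}=\binom{s+t-2}{s-1}$, the recurrence and base cases give $R(s,t)\le\binom{s+t-2}{s-1}$. Specialising to $s=t=r$ yields $R(r,r)\le\binom{2r-2}{r-1}$. It then remains to verify the elementary inequality $\binom{2m}{m}\le 4^m/2$ for all $m\ge 1$, which follows by induction from $\binom{2}{1}=2$ and the ratio bound $\binom{2m+2}{m+1}/\binom{2m}{m}=\tfrac{4m+2}{m+1}<4$. Taking $m=r-1$ for $r\ge 2$ gives $\binom{2r-2}{r-1}\le 4^{r-1}/2=2^{2r-3}$; the case $r=1$ is trivial, since ``order at least $2^{-1}$'' simply means a nonempty graph, which contains $K^1$. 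Combining, every graph of order at least $2^{2r-3}\ge R(r,r)$ contains $K^r$ or $\overline{K^r}$ as an induced subgraph.

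There is no serious obstacle here: this is a classical result, used in the paper only as an off-the-shelf tool. The two points that need a little care are keeping the index bookkeeping straight in the Pascal induction for $R(s,t)\le\binom{s+t-2}{s-1}$, and verifying the final numerical estimate $\binom{2r-2}{r-1}\le 2^{2r-3}$ — note that the cruder ``majority'' greedy argument (repeatedly passing to the larger of neighbourhood/non-neighbourhood) would only deliver the weaker bound $2^{2r-2}$, so routing through the binomial coefficient is exactly what supplies the factor-of-two saving in the stated exponent.
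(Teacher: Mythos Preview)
Your argument is correct and is precisely the classical Erd\H{o}s--Szekeres proof: the recurrence $R(s,t)\le R(s-1,t)+R(s,t-1)$, the binomial bound $R(s,t)\le\binom{s+t-2}{s-1}$ via Pascal's identity, and the central binomial estimate $\binom{2r-2}{r-1}\le 2^{2r-3}$ are all handled cleanly. Your closing remark is also apt --- the naive halving argument only gives $2^{2r-2}$, and the extra factor of $2$ genuinely requires the off-diagonal route.

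There is nothing to compare against, however: the paper does not prove this theorem at all. It is quoted as a classical result with citations to Ramsey and Diestel, and is used purely as a black box in Lemmas~\ref{lem:case2}, \ref{23-links} and~\ref{23-N-unbound}. So your proposal goes beyond what the paper supplies, but in an entirely standard and correct way.
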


We handle first Case 1, for all values of  $\delta=(\alpha,\beta,\gamma)$. 

\begin{lemma}\label{lem:case2}
For any $\delta=(\alpha,\beta,\gamma)$ and any $r \in \mathbb{N}$, if $n \ge 9 \times 2^{4r-1}$ and $\tau$ is a clique-width expression for $H(n,n)$ that results in Case 1 at node $\oplus_a$, then $\tau$  requires at least $r$ labels to construct $H(n,n)$.
\end{lemma}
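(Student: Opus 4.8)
The plan is to work at the node $\oplus_a$ fixed in Section~\ref{two_cases} and show any clique-width expression $\tau$ for $H(n,n)$ uses at least $r$ labels there. By Lemma~\ref{labels} it suffices to produce disjoint vertex sets $U$ (all blue) and $W$ (all non-blue) with $\mu(H(n,n)[U\cup W],U)\ge r$, and by Proposition~\ref{prop-pair} a distinguished pairing $\{U,W\}$ of size $r$ with $U$ blue and $W$ non-blue does the job. In Case~1 we are handed, after possibly swapping left and right, at least $\lceil n/2\rceil-1$ rows among rows $2,\dots,n-1$ each containing a horizontal blue--nonblue pair of the same polarity, say blue on the left; for each such row $R$ record the link $H_{[y_R,y_R+1]}$ that carries its pair, so $y_R\ge b$, and for $y\ge b$ let $m_y$ be the number of these rows with $y_R=y$, so that $\sum_y m_y\ge\lceil n/2\rceil-1$.

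First I would dispose of the ``concentrated'' case: if some $m_{y^*}$ is at least linear in $r$, then the single link $H_{[y^*,y^*+1]}$ carries that many horizontal blue--nonblue pairs of one polarity, and Lemma~\ref{lem-blue-nonblue} produces \emph{inside that link} a distinguished pairing of size $r$ with the blue vertices on one side and the non-blue vertices on the other. Since this pairing lives in two consecutive columns, the value of $\mu$ computed in $H(n,n)[U\cup W]$ equals its size, so Lemma~\ref{labels} finishes. Hence I may assume no $m_y$ is large, so the link-indices $\{y_R\}$ take many distinct values; splitting them by residue modulo $3$ (and keeping the best residue) gives a set $P$ of link-indices that is a couple set -- pairwise more than $2$ apart -- and still accounts for a constant fraction of the rows.

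Next, in every link $H_{[y,y+1]}$ with $y\in P$ I would use the crucial consequence of the minimality of $\oplus_a$, namely that no column has all of rows $2,\dots,n-1$ blue (nor all red): this supplies the extra non-blue vertex required by Lemma~\ref{lem-blue-nonblue2}, so each such link carries \emph{both} an adjacent and a non-adjacent blue--nonblue pair. After a bounded number of further pigeonhole reductions -- on the adjacency type and on the polarity of the manufactured pairs -- I keep a couple set $P'$, still of size linear in $n$, each of whose links carries a blue--nonblue pair of one uniform adjacency type and polarity; since the auxiliary pairs are available this can be arranged so that a uniform \emph{matched} pair and a uniform \emph{unmatched} pair are available in every link of $P'$. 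Now comes the one use of Ramsey: apply Theorem~\ref{Ramsey} to the pairs of $P'$, two-coloured by whether the two relevant bonds are both present (``$\beta$-dense on that pair'') or not, to obtain a sub-couple-set $P''$ of the wanted size on which $\beta$ is uniformly not dense, or uniformly dense (and hence not sparse). In the not-dense case Lemma~\ref{lem:beta-not-dense} combines the adjacent pairs over $P''$ into a matched distinguished pairing of size $|P''|$ with $U$ blue and $W$ non-blue; in the dense case Lemma~\ref{lem:beta-not-sparse} combines the non-adjacent pairs into an unmatched one of size $|P''|$. Choosing the thresholds so that $|P''|\ge r$, Proposition~\ref{prop-pair} and Lemma~\ref{labels} then deliver $r$ labels, and tracking the factor $\tfrac12$ from reserving rows $1$ and $n$, the factor $3$ from the residue trick, the bounded product of $2$'s from fixing adjacency type and polarity, and the Ramsey bound $2^{2r-3}$ gives the stated hypothesis $n\ge9\times 2^{4r-1}$.

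The step I expect to be the main obstacle is the uniformly-dense branch (and, more generally, the interaction of an arbitrary bond set $\beta$ with the colour partition rather than anything about $\alpha$ or $\gamma$): when many columns become mutually completely adjacent, the natural matched pairings built from adjacent blue--nonblue pairs all acquire the same neighbourhood and carry no information, so one is forced to manufacture non-adjacent blue--nonblue pairs in every link while keeping their polarity aligned across links so that Lemma~\ref{lem:beta-not-sparse} still applies. The second delicate point is purely quantitative: making the constants close at $9\times 2^{4r-1}$ rather than at something with an extra factor of $r$ requires invoking the concentrated case precisely when the couple set $P$ would be too small to feed Ramsey, so that a single link already contains a distinguished pairing of size $r$.
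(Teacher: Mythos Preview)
Your plan is essentially the paper's proof: split into a ``concentrated'' branch (many pairs on one link, finished by Lemma~\ref{lem-blue-nonblue}) and a ``spread'' branch (many links each with a pair, upgraded by Lemma~\ref{lem-blue-nonblue2} to carry both an adjacent and a non-adjacent blue--nonblue pair, pigeonholed on polarities, thinned modulo~$3$ to a couple set, and then fed to Ramsey so that Lemma~\ref{lem:beta-not-dense} or~\ref{lem:beta-not-sparse} applies). Your handling of the $\beta$-dense branch is exactly right and is not the obstacle you fear: the point of carrying \emph{both} an adjacent and a non-adjacent pair through the pigeonholing is precisely so that whichever Ramsey colour class appears, the matching lemma is available.

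The one place your write-up diverges from the paper is the quantitative threshold, and your last paragraph has the analysis backwards. The paper does \emph{not} take the concentrated threshold ``linear in $r$''; it takes it to be $\sqrt{n/2}$. With threshold $T$, the concentrated branch needs $T\ge r+1$ while the spread branch yields a couple set of size at least $(\lceil n/2\rceil-1)/(12T)$ which must exceed $2^{2r-3}$. Choosing $T=\sqrt{n/2}$ balances the two and gives exactly $n\ge 9\cdot 2^{4r-1}$; choosing $T$ linear in $r$ would in fact give a \emph{stronger} bound of order $r\cdot 2^{2r}$, not a weaker one with ``an extra factor of $r$'' on top of $2^{4r}$. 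So your approach is fine, but to recover the stated constant you should set the dichotomy at $\sqrt{n/2}$ as the paper does.
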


\begin{proof}
In Case 1 we have, without loss of generality, at least $\lceil n/2 \rceil -1$ horizontal blue-nonblue vertex pairs but we don't know which links these fall on. 

If there are at least $\sqrt{n/2}$ such pairs on the same link then using Lemma \ref{lem-blue-nonblue} we have a matched distinguished pairing $\{U,W\}$  of size $\sqrt{n/2}-1 > r$ such that $\Colour(u)=$ blue for all $u \in U$ and $\Colour(w)\neq$ blue for all $w \in W$. 

If there is no link with $\sqrt{n/2}$ such pairs then there must be at least one such pair on at least  $\sqrt{n/2}$ different links. From Lemma \ref{lem-blue-nonblue2} each such link contains both an adjacent and non-adjacent blue-nonblue pair. It follows from the pigeonhole principle that there is a subset of these of size $\sqrt{n/2}/4$ where the adjacent blue-nonblue pairs have the same polarity and also the non-adjacent blue-nonblue pairs have the same polarity. We use this subset (Note, the following argument applies whether the blue vertex is on the left or right for the adjacent and non-adjacent pairs). If we take the index of the first column in each link in the mentioned subset, and then take every third one of these, we have a couple set $P$ where $|P| \ge \sqrt{n/2}/12$, with corresponding link set $S_L=\{H_{[y,y+1]}: y \in P\}$, such that the adjacent blue-nonblue pair in each link has the same polarity and the non-adjacent blue-nonblue pair in each link has the same polarity. 

Define the graph $G_P$ so that $V(G_P)=P$ and for $x,y \in V(G_P)$ we have $x \sim y$ if and only if they are $\beta$-dense (see definition on page \pageref{dense-sparse}). From Theorem \ref{Ramsey} for any $r$, as $|P| \ge \sqrt{n/2}/12 \ge 2^{2r-3}$ then there exists a couple set $Q \subseteq P$ such that $G_Q$ is either $K^r$ or $\overline{K^r}$.

If $G_Q$ is $\overline{K^r}$, it follows that $\beta$ is not dense in $Q$, and  $S_L$ contains a link set of size $r$ corresponding to the couple set $Q$ where each link has an adjacent blue-nonblue  pair with the same polarity. Applying Lemma \ref{lem:beta-not-dense} this gives us a matched distinguished pairing $\{U,W\}$ of size $r$ such that $\Colour(u)=$ blue for all $u \in U$ and $\Colour(w)\neq$ blue for all $w \in W$.

If $G_Q$ is $K^r$, it follows that $\beta$ is not sparse in $Q$, and $S_L$ contains a link set of size $r$ corresponding to the couple set $Q$ where each link has a non-adjacent blue-nonblue pair with the same polarity. Applying Lemma \ref{lem:beta-not-sparse} this gives us an unmatched distinguished pairing $\{U,W\}$ of size $r$ such that $\Colour(u)=$ blue for all $u \in U$ and $\Colour(w)\neq$ blue for all $w \in W$.

In each case we can construct a distinguished pairing $\{U,W\}$ of size $r$ such that $\Colour(u)=$ blue for all $u \in U$ and $\Colour(w)\neq$ blue for all $w \in W$. Hence, from Lemma \ref{labels} $\tau$  uses at least $r$ labels to construct $H(n,n)$.
\end{proof}

%
%
%
%
\subsection{When \texorpdfstring{$\alpha$}{} has an infinite number of \texorpdfstring{$2$}{}s or \texorpdfstring{$3$}{}s}

For Case 2 we must consider different values for $\alpha$ separately. We denote $m_{23}(n)$ to be the total number of $2$s and $3$s in $\alpha_{[1,n-1]}$.

\begin{lemma}\label{23-links}
For any triple $\delta=(\alpha,\beta,\gamma)$ and any $r \in \mathbb{N}$, if $m_{23}(n) \ge 3 \times 2^{2r}$ and $\tau$ is a clique-width expression for $H(n,n)$ that results in Case 2 at node $\oplus_a$, then $\tau$  requires at least $r$ labels to construct $H(n,n)$.
\end{lemma}

\begin{proof}
Remembering that $C_b$ is the non-white column, without loss of generality we can assume that there are at least $(m_{23}(n)/2)$ $2$- or $3$-links to the right of $C_b$, since otherwise we could reverse the order of the columns. In Case 2 each link has a horizontal blue-blue vertex pair with at least one nonblue vertex in each column, so using Lemma \ref{lem-blue-blue} we have both an adjacent  and non-adjacent blue-nonblue pair in each of these links. 

It follows from the pigeonhole principle that there is a subset of these of size $(m_{23}(n)/8)$ where the adjacent blue-nonblue pairs have the same polarity and also the non-adjacent blue-nonblue pairs have the same polarity. We use this subset. If we take the index of the first column in each link in the mentioned subset, and then take every third one of these, we have a couple set $P$ where $|P| \ge (m_{23}(n)/24)$, with corresponding link set $S_L=\{H_{[y,y+1]}: y \in P\}$,  such that the adjacent blue-nonblue pair in each link has the same polarity and the non-adjacent blue-nonblue pair in each link has the same polarity.

As in the proof of Lemma \ref{lem:case2}, we define a graph $G_P$ so that $V(G_P)=P$ and for $x,y \in V(G_P)$ we have $x \sim y$ if and only if they are $\beta$-dense. From Theorem \ref{Ramsey} for any $r$, as $|P| \ge (m_{23}(n))/24) \ge 2^{2r-3}$ then there exists a couple set $Q \subseteq P$ such that $G_Q$ is either $K^r$ or $\overline{K^r}$.

We now proceed in an identical way to Lemma \ref{lem:case2} to show that we can always construct a distinguished pairing $\{U,W\}$ of size $r$ such that $\Colour(u)=$ blue for all $u \in U$ and $\Colour(w)\neq$ blue for all $w \in W$. Hence, from Lemma \ref{labels} $\tau$  uses at least $r$ labels to construct $H(n,n)$.
\end{proof}

\begin{cor}\label{23-cor}
For any triple $\delta=(\alpha,\beta,\gamma)$ such that $\alpha$ has an infinite number of $2$s or $3$s the hereditary graph class $\GGG^{\delta}$ has unbounded clique-width.
\end{cor}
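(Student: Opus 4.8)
The plan is to combine Lemma~\ref{lem:case2} and Lemma~\ref{23-links}, which together handle the two possible colour-partition cases at the distinguished node $\oplus_a$, and then to use the hypothesis on $\alpha$ to ensure the two size thresholds appearing in those lemmas can be met simultaneously.

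Concretely, fix an arbitrary $r \in \mathbb{N}$. Since $\alpha$ contains infinitely many letters equal to $2$ or $3$, the quantity $m_{23}(n)$ is non-decreasing in $n$ and tends to infinity, so we may choose $n$ large enough that both $n \ge 9 \times 2^{4r-1}$ and $m_{23}(n) \ge 3 \times 2^{2r}$ (in particular $n \ge 3$, so that rows $2$ through $n-1$ are nonempty). Consider the square graph $H(n,n) = H^\delta_{1,1}(n,n)$; by construction it is an induced subgraph of $\PPP^\delta$, hence $H(n,n) \in \GGG^\delta$.

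Next I would take an arbitrary clique-width expression $\tau$ defining $H(n,n)$, form $\tree(\tau)$, and let $\oplus_a$ be the lowest node whose associated subgraph $H_a$ contains every vertex in rows $2$ to $n-1$ of some column of $H(n,n)$; such a node exists because the root has this property. As set up in Section~\ref{two_cases}, the induced colour partition at $\oplus_a$ then falls into Case 1 or Case 2. If it is Case 1, Lemma~\ref{lem:case2} (applicable since $n \ge 9 \times 2^{4r-1}$) shows that $\tau$ uses at least $r$ labels; if it is Case 2, Lemma~\ref{23-links} (applicable for all $\beta$ and $\gamma$ since $m_{23}(n) \ge 3 \times 2^{2r}$) gives the same conclusion. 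Either way $\cwd(H(n,n)) \ge r$.

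Since $r$ was arbitrary and each such $H(n,n)$ lies in $\GGG^\delta$, the clique-width of graphs in $\GGG^\delta$ is unbounded, which is the claim. There is no real obstacle here: the argument is a direct appeal to the two preceding lemmas together with the case dichotomy already established, and the only mild point of care is that the two numerical thresholds on $n$ can be satisfied at once — which is immediate from $m_{23}(n) \to \infty$.
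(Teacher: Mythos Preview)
Your proof is correct and follows essentially the same approach as the paper: for arbitrary $r$, choose $n$ large enough to satisfy both thresholds $n \ge 9 \times 2^{4r-1}$ and $m_{23}(n) \ge 3 \times 2^{2r}$ (possible since $m_{23}(n)\to\infty$), then invoke Lemma~\ref{lem:case2} for Case~1 and Lemma~\ref{23-links} for Case~2 at the node $\oplus_a$. You have simply spelled out a few routine details (e.g.\ existence of $\oplus_a$, membership of $H(n,n)$ in $\GGG^\delta$) that the paper leaves implicit.
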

\begin{proof}
This follows directly from Lemma \ref{lem:case2} for Case 1 and Lemma \ref{23-links} for Case 2, since for any $r \in \mathbb{N}$ we can choose $n$ big enough so that $n \ge 9 \times 2^{4r-1}$ and $m_{23}(n) \ge 3 \times 2^{2r}$ so that whether we are in Case 1 or Case 2 at node $\oplus_a$ we require at least $r$ labels  for any clique-width expression for $H(n,n)$.
\end{proof}

We are aiming to state our result in terms of unbounded $\mathcal{N}^\delta$ so we also require the following.

\begin{lemma}\label{23-N-unbound}
For any triple $\delta=(\alpha,\beta,\gamma)$ such that $\alpha$ has an infinite number of $2$s or $3$s the parameter $\mathcal{N}^\delta$ is unbounded.
\end{lemma}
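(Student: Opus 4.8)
The plan is to exhibit, for every $r \in \mathbb{N}$, a finite set $Q \subseteq \mathbb{N}$ with $\mathcal{N}^\delta(Q) \ge r$, using the fact that $\alpha$ contains infinitely many letters from $\{2,3\}$. The key observation is that a single $2$-link or $3$-link between columns $C_i$ and $C_{i+1}$ creates, on its own, an unbounded number of distinct neighbourhoods when restricted to suitably many rows: if $\alpha_i = 2$, then $v_{i,j} \sim v_{i+1,k}$ iff $j \ge k$, so the vertices $v_{i,1}, v_{i,2}, \dots, v_{i,m}$ have pairwise distinct neighbourhoods in $\{v_{i+1,1},\dots,v_{i+1,m}\}$ (a ``staircase''), and similarly for $\alpha_i = 3$. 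This is exactly the mechanism that forces $\mathcal{N}^\delta$ up, but it must be transported into the two-row graph $T^\delta(Q)$, where one row sits in $R_1$ and the other in $R_2$.

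First I would fix an index $i$ with $\alpha_i \in \{2,3\}$ and consider the columns $i$ and $i+1$. I want to choose $Q$ so that, within $T^\delta(Q)$, the $R_1$-vertices in column $i$ are distinguished from one another by the $R_2$-vertices in column $i+1$. Here the subtlety is that the two-row graph only has one vertex per column in each of rows $1$ and $2$, so a single $\alpha$-link between two fixed columns gives at most two row-vertices there — not enough. The resolution is to use the same letter $\alpha_i \in \{2,3\}$ at $2r$ \emph{different} positions: since $\alpha$ has infinitely many $2$s or $3$s, by pigeonhole it has infinitely many of one of them, say infinitely many $2$s (the $3$ case is symmetric, using the bipartite complement). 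Pick positions $i_1 < i_1+1 < i_2 < i_2 + 1 < \dots < i_r < i_r + 1$ with $\alpha_{i_t} = 2$ for each $t$, chosen far enough apart that the column pairs $\{i_t, i_t+1\}$ are mutually non-adjacent in the relevant sense, and set $Q = \{i_1, i_1+1, \dots, i_r, i_r+1\}$. In $T^\delta(Q)$, put $u_t := v_{i_t,1} \in R_1(Q)$ and $w_t := v_{i_t+1,2} \in R_2(Q)$; by the staircase property of a $2$-link between rows $1$ and $2$ (i.e. $v_{i_t,1} \sim v_{i_t+1,2}$ since $1 \ge 2$ is false — wait, for $\alpha = 2$ we have $v_{i,j} \sim v_{i+1,k}$ iff $j \ge k$, so $v_{i_t,1} \not\sim v_{i_t+1,2}$ but $v_{i_t,2} \sim v_{i_t+1,1}$), I would instead arrange the distinguishing vertices so that $w_t = v_{i_t+1,1}$ lies in $R_1$ and the distinguished vertices $u_s = v_{i_s,2}$ lie in $R_2$, and check that $u_s$ restricted to $R_2$ is adjacent to $w_t$ precisely when $s = t$ (and to no other $w$), using that distinct column-pairs are $\beta$-independent. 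This gives $r$ vertices with pairwise distinct neighbourhoods in $R_1(Q)$, hence $\mathcal{N}^\delta(Q) \ge r$.

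The main obstacle, and the step deserving the most care, is handling the $\beta$-bonds and $\gamma$-edges so that they do not collapse or homogenise the neighbourhoods I am trying to keep distinct. A $\beta$-bond $(x,y)$ makes \emph{every} vertex of column $x$ adjacent to \emph{every} vertex of column $y$, which would equalise neighbourhoods across those columns; so I must choose the positions $i_1, \dots, i_r$ and hence $Q$ to avoid troublesome bonds. This is where I would invoke Ramsey-type or pigeonhole reasoning once more: among the infinitely many available positions, by the infinite Ramsey theorem applied to the ``$\beta$-dense / $\beta$-sparse'' colouring of pairs of column-indices (as done in Lemmas~\ref{lem:beta-not-dense} and~\ref{lem:beta-not-sparse} and in Lemma~\ref{lem:case2}), I can extract an infinite subset of positions that is \emph{monochromatic} — either uniformly $\beta$-dense or uniformly $\beta$-sparse on the associated couple set. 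In the $\beta$-sparse case the links behave as in the $\beta = \emptyset$ situation and the staircase argument goes through directly; in the $\beta$-dense case the extra bonds are present \emph{symmetrically} across all chosen pairs, so they contribute the same adjacency to every $u_s$ and therefore still do not identify any two of them — the distinguishing is still witnessed by the within-link staircase adjacency. The $\gamma$-letters only affect edges \emph{within} a column, i.e. between $R_1(Q)$ and $R_2(Q)$ of the \emph{same} column, and never between column $i_t$ and column $i_s$ for $s \ne t$, so they are irrelevant to whether $u_s$ and $u_t$ have the same neighbourhood in $R_1(Q)$. Assembling these observations: for every $r$ we produce $Q$ with $\mathcal{N}^\delta(Q) \ge r$, so $\mathcal{N}^\delta$ is unbounded. (Alternatively, and perhaps more cheaply, one could cite Corollary~\ref{23-cor}: $\GGG^\delta$ has unbounded clique-width, and then deduce unbounded $\mathcal{N}^\delta$ from the converse direction of Theorem~\ref{thm-0123unbound} — but since that theorem presumably depends on this lemma, the self-contained combinatorial argument above is the honest route.)
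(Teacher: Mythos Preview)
Your plan is the paper's plan: choose a large couple set $P$ of positions where $\alpha$ equals $2$ (or $3$), apply Ramsey to the pairs of $P$ coloured according to $\beta$, and in each monochromatic case exhibit $r$ vertices of $R_1$ with pairwise distinct $R_2$-neighbourhoods.

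There is one genuine slip. Your Ramsey dichotomy ``uniformly $\beta$-dense or uniformly $\beta$-sparse'' is not exhaustive: a pair $x,y$ can be neither dense nor sparse (exactly one of $(x,y+1),(x+1,y)$ lies in $\beta$), so this is not a valid $2$-colouring. The paper (as in Lemma~\ref{lem:case2}) colours pairs as \emph{$\beta$-dense} versus \emph{not $\beta$-dense}. In the ``not dense'' case the argument is not simply the $\beta=\emptyset$ picture you describe; rather, for each pair $x,y\in Q$ at least one of the bonds $(x,y+1),(x+1,y)$ is absent, and hence at least one of $v_{x,2},v_{y,2}$ distinguishes $v_{x+1,1}$ from $v_{y+1,1}$ --- this is exactly the mechanism of Lemma~\ref{lem:beta-not-dense}. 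In the ``all dense'' case you must use the \emph{non}-adjacency within each $2$-link (namely $v_{x,1}\not\sim v_{x+1,2}$), not the ``within-link staircase adjacency'', together with the guaranteed cross-link bonds, to obtain the unmatched distinguished pairing of Lemma~\ref{lem:beta-not-sparse}. With those two adjustments your argument coincides with the paper's.
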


\begin{proof}
If there is an infinite number of $2$s in $\alpha$ we can create a couple set $P$ of any required size such that $\alpha_x=2$ for every $x \in P$, so that in the two-row graph (see Section \ref{two-row}) $v_{x,1} \not\sim v_{x+1,2}$ and $v_{x,2} \sim v_{x+1,1}$ (i.e. we have both an adjacent and non-adjacent pair in the $\alpha_x$-link).

We now apply the same approach as in Lemmas \ref{lem:case2} and  \ref{23-links}, applying Ramsey theory to the graph $G_P$ defined in the same way as before. Then for any $r$ we can set $|P| \ge 2^{2r-3}$ so that there exists a couple set $Q \subseteq P$ where $G_Q$ is either  $K^r$ or $\overline{K^r}$.

If $G_Q$ is $\overline{K^r}$ it follows that $\beta$ is not dense in $Q$. So for any $x,y \in Q$,  $v_{x+1,1}$ and $v_{y+1,1}$ have different neighbourhoods in $R_2(Q)$ since they are distinguished by either $v_{x,2}$ or $v_{y,2}$. Hence, if $n$ is the highest natural number in $Q$ then $\mathcal{N}^\delta([1,n+1]) \ge r$.

If $G_Q$ is $K^r$ it follows that $\beta$ is not sparse in $Q$. So for any $x,y \in Q$, $v_{x,1}$ and $v_{y,1}$ have different neighbourhoods in $R_2(Q)$ since they are distinguished by either $v_{x+1,2}$ or $v_{y+1,2}$. Hence, $\mathcal{N}^\delta([1,n+1]) \ge r$.
  
Either way, we have $\mathcal{N}^\delta([1,n+1]) \ge r$, but $r$ can be arbitrarily large, so  $\mathcal{N}^\delta$ is unbounded.

A similar argument applies if there is an infinite number of $3$s.
\end{proof}

%
%
%
%

\subsection{When \texorpdfstring{$\alpha$}{} has a finite number of \texorpdfstring{$2$}{}s and \texorpdfstring{$3$}{}s}

If $\alpha$ contains only a finite number of $2$s and $3$s then there exists $J \in \mathbb{N}$ such that $\alpha_j  \in \{0,1\}$ for $j > J$. In Case 2, where we have a part-row of consecutive blue vertices,  we are interested in the adjacencies of these blue vertices to the nonblue vertices in each column. Although the nonblue vertices could be in any row, in fact,  if $\alpha$ is over the alphabet $\{0,1\}$, the row index of the nonblue vertices does not  alter the blue-nonblue adjacencies. 

In Case $2$, let $Q$ be the set of column indices of the horizontal set of consecutive blue vertices in row $R_r$ of $H(n,n)$ and let $U_1=\{v_{i,r}: i \in Q\}$ be this horizontal set of blue vertices. Let $U_2=\{u_j: j \in Q\}$ be the corresponding set of nonblue vertices such that $u_j \in C_j$. We have the following:

\begin{lemma}\label{lem-two-row}
In Case $2$, with $U_1$ and $U_2$ defined as above, if $\alpha$ is a word over the alphabet $\{0,1\}$  then for any $i,j \in Q$, $v_{i,r} \sim u_j$ in $\PPP^\delta$ if and only if $v_{i,1} \sim v_{j,2}$ in the two-row graph $T^\delta(Q)$.
\end{lemma}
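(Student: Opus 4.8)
The plan is to unpack the edge definitions of $\PPP^\delta$ restricted to the alphabet $\{0,1\}$ and observe that the adjacency across a single $\alpha_y$-link depends only on whether the two endpoints lie in the same row or not, never on \emph{which} rows. Concretely, by the definition of $\alpha$-edges, if $\alpha_y=0$ then $v_{y,a}\sim v_{y+1,b}$ iff $a=b$, and if $\alpha_y=1$ then $v_{y,a}\sim v_{y+1,b}$ iff $a\neq b$. In both cases the relation is a function of the truth value of ``$a=b$'' alone. I would isolate this as the core observation, since the rows $r$ (for the blue vertices in $U_1$) and the various rows of the nonblue vertices in $U_2$ differ, yet I need the adjacency to behave as though everything sat on rows $1$ and $2$.

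\textbf{Key steps.} First I would reduce to consecutive columns: for $i,j\in Q$ with $i<j$, the only edges of $\PPP^\delta$ between $v_{i,r}$ (resp.\ $v_{i,1}$) and $v_{j,\cdot}$ come either from an $\alpha$-link (when $j=i+1$), from a $\beta$-bond (when $(i,j)\in\beta$), or never from $\gamma$-edges (since $\gamma$-edges are within a column and $i\neq j$). Second, I would handle the $\beta$-bond contribution: if $(i,j)\in\beta$ then $v_{i,r}\sim u_j$ and $v_{i,1}\sim v_{j,2}$ both hold regardless of rows, and if $(i,j)\notin\beta$ neither gets an edge from $\beta$; so the $\beta$-part matches on both sides. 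Third, for the case $j=i+1$, I invoke the core observation: $v_{i,r}\sim u_j$ iff ($r$ equals the row of $u_j$) XOR ($\alpha_i=1$)\,---\,wait, more carefully, iff the row-equality condition for $\alpha_i$ is met. But $u_j$ is \emph{nonblue} while $v_{j,r}$ would be blue (it's in $U_1$ if $j\in Q$), so the row of $u_j$ is \emph{not} $r$; hence $v_{i,r}\not\sim u_j$ iff $\alpha_i=0$, and $v_{i,r}\sim u_j$ iff $\alpha_i=1$. On the two-row side, $v_{i,1}$ and $v_{j,2}$ are in different rows, so likewise $v_{i,1}\sim v_{j,2}$ iff $\alpha_i=1$. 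These agree. For non-consecutive $i,j$ there is no $\alpha$-contribution on either side, so adjacency is governed purely by $\beta$, already handled. Fourth, I would assemble these cases into the biconditional.

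\textbf{Main obstacle.} The subtle point is the claim that the row of $u_j$ is different from $r$: this needs that $u_j\notin U_1$, i.e.\ that the designated nonblue vertex in column $C_j$ does not coincide with the blue vertex $v_{j,r}$. This is true because $u_j$ is \emph{nonblue} by construction and $v_{j,r}$ is \emph{blue}, so they are distinct vertices of $C_j$, hence in different rows. I should state this explicitly and also note that it is exactly why the lemma needs $j\in Q$ (so that $C_j$ does contain a blue vertex of $U_1$ in row $r$) — but in fact the argument only needs that $u_j$ and $v_{i,r}$ are in different rows when $j=i+1$, which follows from $u_j$ being nonblue. A secondary, purely bookkeeping concern is making sure the $\beta$-bonds between columns at distance exactly $1$ never occur (they can't, since a bond-set is a subset of $\{(x,y):|x-y|>1\}$), so there is no double-counting of edges between consecutive columns; I would mention this to keep the case split clean.

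\textbf{Expected length and difficulty.} This should be short — essentially a careful case analysis with no real computation. The only place where care is genuinely required is the row-distinctness argument for $u_j$; everything else is direct substitution into the definitions of $\alpha$-, $\beta$-, and $\gamma$-edges.
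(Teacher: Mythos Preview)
Your approach is essentially the same case analysis as the paper's proof, and your handling of the $\beta$-bond and $\alpha$-link cases (including the key observation that $u_j$ lies in a row different from $r$ because $u_j$ is nonblue while $v_{j,r}\in U_1$ is blue) is correct. However, you have omitted the case $i=j$: the statement is ``for any $i,j\in Q$'', and when $i=j$ the adjacency $v_{i,r}\sim u_i$ is governed by $\gamma_i$ (same column, different rows), exactly as $v_{i,1}\sim v_{i,2}$ is in $T^\delta(Q)$ --- the paper includes this as a separate case, and your write-up should too, since your reduction ``with $i<j$'' silently drops it.
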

\begin{proof}
Considering the vertex sets $U_1 \cup U_2$ of $\PPP^\delta$ and $R_1(Q) \cup R_2(Q)$ of $T^\delta(Q)$ (see Section \ref{two-row}) we have:

\begin{enumerate}[label=(\alph*)]
	\item For $i=j$ both $v_{j,r} \sim u_j$ and $v_{j,1} \sim v_{j,2}$ if and only if $\gamma_j=1$. 
	\item For $|i-j|>1$ both $v_{i,r} \sim u_j$ and $v_{i,1} \sim v_{j,2}$ if and only if $(i,j) \in \beta$.
	\item For $j=i+1$ both $v_{i,r} \sim u_j$ and $v_{i,1} \sim v_{j,2}$ if and only if $\alpha_i=1$.
\end{enumerate}
Hence $v_{i,r} \sim u_j$ if and only if $v_{i,1} \sim v_{j,2}$.
\end{proof}

\begin{lemma}\label{lem-finite23}
If $\delta=(\alpha,\beta,\gamma)$ where $\alpha$ is an infinite word over the alphabet $\{0,1,2,3\}$ with a finite number of $2$s and $3$s, then the hereditary graph class $\GGG^\delta$ has unbounded clique-width if and only if $\mathcal{N}^\delta$ is unbounded.
\end{lemma}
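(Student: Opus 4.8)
The statement splits into two implications, which I would prove separately.

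\emph{Unbounded $\NNN^\delta$ gives unbounded clique-width.} Since $\alpha$ has only finitely many $2$s and $3$s, fix $J$ with $\alpha_j\in\{0,1\}$ for all $j>J$. Deleting the first $J$ columns produces a triple $\delta'$ whose $\alpha$ is a word over $\{0,1\}$, with $\GGG^{\delta'}\subseteq\GGG^\delta$, and with $\NNN^{\delta'}$ unbounded iff $\NNN^\delta$ is (Lemma~\ref{lem:Nbound}); so I may assume $\alpha\in\{0,1\}^{\mathbb{N}}$ outright. Fix a target $r$. The aim is to find $j$ and $n\ge 9\cdot 2^{4r-1}$ for which every clique-width expression $\tau$ of $H=H^\delta_{j,1}(n,n)$ uses at least $r$ labels. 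Form the node $\oplus_a$ and the blue/nonblue colouring of Section~\ref{two_cases}. If Case~1 occurs, Lemma~\ref{lem:case2} already yields $r$ labels. If Case~2 occurs, we get a horizontal run of blue vertices in some row $R_r$ spanning a column-interval $Q$ with $|Q|\ge\lceil n/2\rceil+1$; let $U_1$ be those blue vertices and $U_2$ one nonblue vertex in each column of $Q$ (one exists since no column is all-blue on rows $2$ to $n-1$). By Lemma~\ref{lem-two-row} the $U_1$--$U_2$ adjacencies coincide with the two-row graph $T^\delta(Q)$, so $\mu\bigl(H[U_1\cup U_2],U_1\bigr)=\NNN^\delta(Q)$ and Lemma~\ref{labels} forces $\tau$ to use at least $\NNN^\delta(Q)$ labels. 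Hence the task reduces to choosing $j,n$ so large that every interval $Q$ that can arise in Case~2 has $\NNN^\delta(Q)\ge r$.

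Two facts guide this choice. First, $\NNN^\delta$ is monotone under interval inclusion: adjoining a column to either end of $[a,b]$ can only refine the $R_1$-classes and create at most one new class, so $\NNN^\delta([a',b'])\le\NNN^\delta([a,b])$ whenever $[a',b']\subseteq[a,b]$. Second, by Lemma~\ref{lem:Nbound} and the hypothesis, $\NNN^\delta([c,m])\to\infty$ as $m\to\infty$ for every fixed $c$. In Case~2 the run $Q$ always abuts an end of the square --- it is $[j,q']$ with $q'\ge j+\lceil n/2\rceil$, or $[q,j+n-1]$ with $q\le j+n-1-\lceil n/2\rceil$ --- so by monotonicity the Case~2 bound is at least $\min\bigl\{\NNN^\delta([j,\,j+\lceil n/2\rceil]),\ \NNN^\delta([j+n-1-\lceil n/2\rceil,\,j+n-1])\bigr\}$. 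The first term is made $\ge r$ by taking $n$ large for the fixed $j$. Forcing the second term (the block from the midpoint of the square to its right edge) to be $\ge r$ as well is the crucial move: I would iterate the unboundedness to locate consecutive intervals each carrying $\ge r$ distinct two-row neighbourhoods, and then slide and left-pad the square (left-padding a half only increases $\NNN^\delta$) so that both halves are ``neighbourhood-rich'' simultaneously while still $n\ge 9\cdot 2^{4r-1}$.

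\emph{Bounded $\NNN^\delta$ gives bounded clique-width.} I would prove the contrapositive by construction: if $\NNN^\delta\le N$ and $\alpha$ has $t$ letters from $\{2,3\}$, then every $G\in\GGG^\delta$ has linear clique-width $O(N+t)$. The bound $\NNN^\delta\le N$ forces $\beta$, $\gamma$ and the $\{0,1\}$-part of $\alpha$ to be ``simple'': at any column the number of distinct ways a later column can be joined to the vertices placed so far is bounded in terms of $N$. So, fixing an embedding of $G$ in $\PPP^\delta$ and sweeping columns left to right, I would maintain a bounded frontier label for the current column (to realise its $\alpha$-link), at most $N$ labels recording which two-row neighbourhood type controls each already-placed vertex's remaining attachments, a bounded number of labels bookkeeping the $t$ exceptional $2$- and $3$-links and the ``open'' $\beta$-bonds, and a ``finished'' label; labels are recycled once a column leaves the frontier, giving an $O(N+t)$-expression.

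\emph{The main obstacle.} Everything routine aside, the real work is the final step of the first implication: exhibiting a square both of whose halves have $\ge r$ distinct two-row neighbourhoods, even though $\NNN^\delta$ may grow only sparsely along $\mathbb{N}$ (there can be arbitrarily long column-intervals on which $\NNN^\delta$ equals $1$). Making the sliding/left-padding argument robust --- so that the adversary's expression cannot push Case~2 onto a neighbourhood-poor interval --- is exactly where the monotonicity of $\NNN^\delta$ and Lemma~\ref{lem:Nbound} have to be used with care, and it is the technical heart of the lemma.
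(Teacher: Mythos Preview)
Your first implication tracks the paper closely: reduce to $\alpha\in\{0,1\}^{\mathbb{N}}$, handle Case~1 via Lemma~\ref{lem:case2}, and in Case~2 invoke Lemma~\ref{lem-two-row} to pass to the two-row graph. The ``main obstacle'' you flag, however, is self-inflicted. In Case~2 you have \emph{both} runs at your disposal --- $R_l$ yields a left-anchored blue run $[j,b]$ and $R_r$ a right-anchored one $[b,j+n-1]$ --- and there is no reason to commit to the longer one. The paper simply picks two consecutive rich intervals $Q_1=[J{+}1,J{+}N_1]$ and $Q_2=[J{+}N_1{+}1,J{+}N_1{+}N_2]$ (Lemma~\ref{lem:Nbound} applied twice), takes $n\ge\max\{N_1{+}N_2,\,9\cdot 2^{4r-1}\}$, and branches on the position of $b$: if $b\le J{+}N_1$ the right run contains $Q_2$, otherwise the left run contains $Q_1$; monotonicity of $\mathcal{N}^\delta$ under interval inclusion finishes. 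No sliding, no left-padding, no need to force both halves of the square to be neighbourhood-rich simultaneously.

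Your second implication has a genuine gap. A left-to-right \emph{column} sweep cannot realise the $\alpha$-links with boundedly many labels: when $\alpha_i=0$ the edges between columns $i$ and $i+1$ form a matching \emph{indexed by row}, so attaching $v_{i+1,j}$ correctly requires singling out $v_{i,j}$ among the (arbitrarily many) vertices already placed in column $i$. A single ``frontier label for the current column'' cannot do this, and the two-row neighbourhood type is constant across a column, so those $N$ labels do not help either. The paper instead sweeps \emph{row by row}: the first $J$ columns (where any $2$s or $3$s live) get dedicated current/previous labels ($2J$ labels), the remaining vertices of the current row use two alternating labels $a_1,a_2$, and every earlier-row vertex in columns $>J$ carries one of $N$ partition labels given by its column's two-row neighbourhood class. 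The point is that once $\alpha\in\{0,1\}$, a new vertex $v_{i,j}$ has identical adjacency to all earlier-row vertices sharing a partition label (by Lemma~\ref{lem-two-row}), so the $\eta$-operations are well-defined. This yields linear clique-width at most $2J+N+2$.
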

\begin{proof}
First, we prove that $\GGG^\delta$ has unbounded clique-width if $\mathcal{N}^\delta$ is unbounded.

As $\alpha$ has a finite number of $2$s and $3$s there exists a $J \in \mathbb{N}$ such that ${\alpha}_j \in \{0,1\}$ if $j > J$.

As $\mathcal{N}^\delta$ is unbounded this means that from Lemma \ref{lem:Nbound} for any $r \in \mathbb{N}$ there exist $N_1,N_2 \in \mathbb{N}$ such that, setting $Q_1= [J+1, J+N_1]$ and $Q_2= [J+N_1+1, J+N_1+N_2]$, then $\mathcal{N}^\delta(Q_1) \ge r$ and $\mathcal{N}^\delta(Q_2) \ge r$. 

Denote the $n \times n$ graph $H^\prime(n,n)=H^\delta_{J+1,1}(n,n) \in \GGG^\delta$. As described in Section \ref{two_cases} we again consider the two possible cases for a clique-width expression $\tau$ for  $H^\prime(n,n)$ at a node $\oplus_a$ which is the lowest node in $\tree(\tau)$ such that $H_a$ contains a column of $H^\prime(n,n)$.

Case $1$ is already covered by Lemma \ref{lem:case2} for $n \ge 9 \times 2^{4r-1}$.

In Case $2$, one row $R_r$ of $H^\prime(N_1+N_2,N_1+N_2)$ has a blue vertex in column $C_b$ and blue vertices in every column to the right of $C_b$ and one row $R_l$ has a blue vertex in column $C_b$ and blue vertices in every column to the left of $C_b$. 

If $b \le J+N_1$ then consider the graph to the right of $C_b$. We know every column has a blue vertex in row $R_r$ and a non-blue vertex in a row other than $R_r$. The column indices to the right of $C_b$ includes $Q_2$. It follows from Lemma \ref{lem-two-row}   that in the columns whose indices belong to $Q_2$ the neighbourhoods of the blue set (the mentioned blue vertices) to the non-blue set, are identical to the neighbourhoods in graph $T^\delta(Q_2)$ between the vertex sets $R_1(Q_2)$ and $R_2(Q_2)$. 

On the other hand if $b > J+N_1$ we can make an identical claim for the graph to the left of $C_b$ which now includes the column indices for $Q_1$. It follows from Lemma \ref{lem-two-row} that the neighbourhoods of the blue set to the non-blue set are identical to the neighbourhoods in graph $T^\delta(Q_1)$ between the vertex sets $R_1(Q_1)$ and $R_2(Q_1)$. 

As both $\mathcal{N}^\delta(Q_1)=\mu(T^{\delta}(Q_1),R_1(Q_1)) \ge r$ and $\mathcal{N}^\delta(Q_2)=\mu(T^{\delta}(Q_2),R_2(Q_2)) \ge r$ it follows from Lemma \ref{labels} that any clique-width expression for $H^\prime(n,n)$ with $n \ge (N_1+N_2)$ resulting in Case $2$ requires at least $r$ labels.

For any $r \in \mathbb{N}$ we can choose $n$ big enough so that $n \ge$ max $\{9 \times 2^{4r-1}, (N_1+N_2)\}$ so that whether we are in Case $1$ or Case $2$ at node $\oplus_a$ we require at least $r$ labels  for any clique-width expression for $H^\prime(n,n)$.  Hence, $\GGG^\delta$ has unbounded clique-width if $\mathcal{N}^\delta$ is unbounded. 

Secondly, suppose that  $\mathcal{N}^\delta$ is bounded, so that there exists $N \in \mathbb{N}$ such that $\mathcal{N}^\delta([J+1,n])=\mu(T^\delta([J+1,n]),R_1([J+1,n])) < N$ for all $n > J$ . 

We claim $lcwd(\GGG^\delta) \le 2J+N+2$. For we can create a linear clique-width expression using no more than $2J+N+2$ labels that constructs any graph in $\GGG^\delta$ row by row, from bottom to top and from left to right.

For any graph $G \in \GGG^\delta$ let it have an embedding in the grid $\PPP$ between columns $1$ and $M>J$. 

We will use the following set of $2J+N+2$ labels:
\begin{itemize}
\item 2 \emph{current vertex labels}: $a_1$ and $a_2$; 
\item $J$ \emph{current row labels for first $J$ columns}: $\{c_y : y=1,\dots,J\}$;
\item $J$ \emph{previous row labels for first $J$ columns}: $\{p_{y} : y = 1,\dots,J\}$;
\item $N$ \emph{partition labels}: $\{s_{y} : y=1,\dots,N\}$.
\end{itemize}

We allocate a default partition label $s_y$ to each column of $G_{[J+1, M]}$ according to the $R_2([J+1,M])$-similar equivalence classes of the vertex set $R_1([J+1,M])$ in $T^\delta([J+1,M])$. There are at most $N$ partition sets $\{S_y\}$ of $R_1([J+1,M])$, and if vertex $v_{i,1}$ is in $S_y$, $1 \le y \le N$,  then the default partition label for vertices in column $i$ is $s_y$. It follows that for two default column labels, $s_x$ and $s_y$, vertices in columns with label $s_y$ are  either all adjacent to vertices in columns with label $s_x$ or they are all non-adjacent (except the special case of vertices in consecutive columns and the same row, which will be dealt with separately in our clique-width expression).

Carry out the following row-by-row linear iterative process to construct each row $j$, starting with row $1$.

\begin{enumerate}[label=(\roman*)]
\item Construct the first $J$ vertices in row $j$, label them $c_1$ to $c_J$ and build any edges between them as necessary.
\item Insert required edges from each vertex labelled $c_1,\dots,c_J$ to vertices in lower rows in columns $1$ to $J$. This is possible because the vertices in lower rows in column $i$ ($1 \le i \le J$) all have label $p_i$ and have the same adjacency with the vertices in the current row.
\item Relabel vertices labelled $c_1,\dots,c_J$ to $p_1,\dots,p_{J-1},a_2$ respectively.
\item Construct and label subsequent vertices in row $j$ (columns $J+1$ to $M$), as follows. 
	\begin{enumerate}[label=(\alph*)]
	\item Construct the next vertex in column $i$ and label it 
	$a_1$ (or $a_2$). 
	\item If $\alpha_{i-1}=0$ then insert an edge from the current 
	vertex $v_{i,j}$ (label $a_1$) to the previous vertex 
	$v_{i-1,j}$ (label $a_2$).
	\item Insert edges to vertices that are adjacent as a result 
	of the partition $\{S_y\}$ described above. This is possible 
	because all previously constructed vertices with a particular 
	default partition label $s_y$ are either all adjacent or all 
	non-adjacent to the current vertex.
	\item Insert edges from the current vertex to vertices 
	labelled $p_j$ ($1 \le j \le J$) as necessary.
	\item Relabel vertex $v_{i,j-1}$ to its default partition 
	label $s_y$.
	\item Create the next vertex in row $i$ and label it $a_2$ (or 
	$a_1$ alternating). 
	\end{enumerate}
\item When the end of the row is reached, repeat for the next row.
\end{enumerate}

Hence we can construct any graph in the class with at most $2J+N+2$ labels so the clique-width of $\GGG^\delta$ is bounded if  $\mathcal{N}^\delta$ is bounded.
\end{proof}

Corollary \ref{23-cor}, Lemma \ref{23-N-unbound} and Lemma \ref{lem-finite23} give us the following:

\begin{thm}\label{thm-0123unbound}
For any triple $\delta=(\alpha,\beta,\gamma)$ the hereditary graph class $\GGG^\delta$ has unbounded clique-width if and only if $\mathcal{N}^\delta$ is unbounded.
\end{thm}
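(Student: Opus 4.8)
The plan is to split into two exhaustive cases according to the number of letters $2$ and $3$ appearing in the word $\alpha$, and in each case to assemble the biconditional from the results already established in this section.

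First I would handle the case where $\alpha$ contains infinitely many $2$s or infinitely many $3$s. Here both directions of the equivalence are automatic: Corollary~\ref{23-cor} tells us that $\GGG^\delta$ has unbounded clique-width, and Lemma~\ref{23-N-unbound} tells us that $\mathcal{N}^\delta$ is unbounded. So the two sides of the ``if and only if'' are both true, and the biconditional holds vacuously in this case. The only thing to check is that these two cited results indeed require nothing of $\beta$ and $\gamma$ beyond what is given, which is exactly how they are stated.

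Next I would handle the complementary case, where $\alpha$ has only finitely many $2$s and $3$s. This is precisely the hypothesis of Lemma~\ref{lem-finite23}, which asserts the full equivalence ``$\GGG^\delta$ has unbounded clique-width if and only if $\mathcal{N}^\delta$ is unbounded'' under exactly this assumption. So this case is immediate from that lemma, with nothing further to prove.

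Since every infinite word over $\{0,1,2,3\}$ either has infinitely many occurrences of $2$ or $3$, or has only finitely many, these two cases cover all triples $\delta=(\alpha,\beta,\gamma)$, and the theorem follows. I do not anticipate any real obstacle here: all the substantive work (the Ramsey-theoretic lower-bound arguments for Cases~1 and~2 of the colour partition in Lemmas~\ref{lem:case2}, \ref{23-links} and~\ref{lem-finite23}, the translation to the two-row graph in Lemma~\ref{lem-two-row}, and the explicit bounded linear clique-width expression in the second half of Lemma~\ref{lem-finite23}) has already been carried out; the only point requiring a moment's care is observing that in the first case the equivalence is trivially true rather than something needing a matching upper-bound construction, so one should not be tempted to prove boundedness there.
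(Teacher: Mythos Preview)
Your proposal is correct and follows essentially the same approach as the paper: the paper's proof is simply the one-line observation that Corollary~\ref{23-cor}, Lemma~\ref{23-N-unbound} and Lemma~\ref{lem-finite23} together give the result, which is precisely the case split you describe.
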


We will denote $\Delta$ as the set of all $\delta$-triples for which the class $\GGG^\delta$ has unbounded clique-width.

%
%
%
%
%
%
%

\section{\texorpdfstring{$\GGG^\delta$}{\GGG{}} graph classes that are minimal of unbounded clique-width}\label{Sect:Minimal}

To show that for some $\delta \in \Delta$ the class $\GGG^\delta$  is a minimal class of unbounded clique-width we must show that any proper hereditary subclass $\CCC$ has bounded clique-width. If $\CCC$ is a hereditary graph class such that $\CCC \subsetneq \GGG^\delta$ then there must exist a non-trivial finite forbidden graph $F$ that is in $\GGG^\delta$ but not in $\CCC$. In turn, this graph $F$ must be an induced subgraph of some $H^\delta_{j,1}(k,k)$ for some $j$ and $k\in \mathbb{N}$, and thus $\CCC \subseteq \Free(H^\delta_{j,1}(k,k))$. 

We know that for a minimal class, $\delta$ must be recurrent, because if it contains a $k$-factor $\delta_{[j,j+k-1]}$ that either does not repeat, or repeats only a finite number of times, then $\GGG^\delta$ cannot be minimal, as forbidding the induced subgraph $H^\delta_{j,1}(k,k)$ would leave a proper subclass that still has unbounded clique-width.  Therefore, we will only consider recurrent $\delta$ for the remainder of the paper.

%
%
\subsection{The bond-graph}\label{bond-graph}

To study minimality we will use the following  graph class. A \emph{bond-graph} $B^\beta(Q)=(V,E)$ for finite $Q \subseteq\mathbb{N}$ has vertices $V=Q$ and edges $E=\beta_Q$.

Let $\BBB^\beta= \{B^\beta(Q): Q \subseteq \mathbb{N} \text{ finite}\}$. Note that  $\BBB^\beta$ is a hereditary subclass of $\GGG^\delta$ because
	\begin{enumerate}[label=(\alph*)]
	\item  if $Q^\prime \subseteq Q$ then $B^\beta(Q^\prime)$ is 
	also a bond-graph, and
	\item $B^\beta(Q)$ is an 
	induced subgraph of $\PPP^\delta$ since if $Q=\{y_1, y_2, \dots 
	,y_n\}$ with $y_1<y_2< \dots <y_n$ then it can be constructed 
	from $\PPP^\delta$ by taking one vertex from each column $y_j$ 
	in turn such that there is no $\alpha$ or $\gamma$ edge to 
	previously picked vertices.
	\end{enumerate}

We define a parameter (for $n \ge 2$) 
\[\mathcal{M}^\beta(n)= \sup_{m<n} \mu(B^\beta([1,n]),[1,m]).\]

The bond-graphs can be characterised as the sub-class of graphs on a single row (although missing the $\alpha$-edges) with the parameter $\mathcal{M}^\beta$ measuring the number of distinct neighbourhoods between intervals of a single row.

We will say that the bond-set $\beta$ has \emph{bounded $\mathcal{M}^\beta$} if there exists $M$ such that $\mathcal{M}^\beta(n) < M$ for all $n \in \mathbb{N}$.

The following proposition will prove useful later in creating linear clique-width expressions.
\begin{prop}\label{Mlabels}
Let $n,m,m^\prime \in \mathbb{N}$ satisfy $m < m^\prime < n$. Then for graph $B^\beta([1,n])$, in any partition of $[1,m]$ into $[m+1,n]$-similar sets $\{S_i:1 \le i \le k\}$ and $[1,m^\prime]$ into $[m^\prime+1,n]$-similar sets $\{S^\prime_j:1 \le j \le k^\prime\}$ for every $\ell \in [1,k]$ there exists $\ell^\prime \in [1,k^\prime]$ such that $S_\ell \subseteq S^\prime_{\ell^\prime}$.
\end{prop}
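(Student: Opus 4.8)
The plan is to show that the partition $\{S'_j\}$ is a \emph{refinement} of the partition $\{S_i\}$, which is exactly the assertion that every $S_\ell$ is contained in some $S'_{\ell'}$. The key observation is that $[m'+1,n] \subseteq [m+1,n]$ since $m < m'$, so having the same neighbourhood in the larger set $[m+1,n]$ is a stronger condition than having the same neighbourhood in the smaller set $[m'+1,n]$. More precisely, for $x,y \in [1,m]$, if $x$ and $y$ are $[m+1,n]$-similar (i.e. $N(x) \cap [m+1,n] = N(y) \cap [m+1,n]$ in $B^\beta([1,n])$), then in particular their neighbourhoods agree on the subset $[m'+1,n]$, so $x$ and $y$ are also $[m'+1,n]$-similar.

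Carrying this out: fix $\ell \in [1,k]$ and pick any $x \in S_\ell$. Since $S_\ell \subseteq [1,m] \subseteq [1,m']$, the vertex $x$ lies in exactly one block $S'_{\ell'}$ of the partition of $[1,m']$; set $\ell' := \ell'$. Now take an arbitrary $y \in S_\ell$. Then $x$ and $y$ are $[m+1,n]$-similar, hence (as noted above) $[m'+1,n]$-similar, hence they lie in the same $[m'+1,n]$-similar block, namely $S'_{\ell'}$. Thus $y \in S'_{\ell'}$, and since $y$ was arbitrary, $S_\ell \subseteq S'_{\ell'}$.

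There is essentially no obstacle here; the only point requiring a moment's care is that both partitions are partitions of \emph{different} ground sets ($[1,m]$ versus $[1,m']$), so one must note that $[1,m] \subseteq [1,m']$ to make sense of locating an element of $S_\ell$ inside some $S'_{\ell'}$. Everything else is the elementary fact that similarity with respect to a larger witness set implies similarity with respect to a smaller one, so the finer equivalence classes sit inside the coarser ones.
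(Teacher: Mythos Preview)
Your argument is correct and matches the paper's proof essentially verbatim: both observe that $[m'+1,n]\subseteq[m+1,n]$, so $[m+1,n]$-similarity implies $[m'+1,n]$-similarity, forcing any two elements of $S_\ell$ into the same block $S'_{\ell'}$. Two cosmetic points: your opening sentence has the refinement direction reversed (it is $\{S_i\}$ that refines $\{S'_j\}$ on $[1,m]$, not the other way round), and ``set $\ell':=\ell'$'' is a typo for ``let $\ell'$ denote this index''.
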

\begin{proof}
As two vertices $x$ and $y$ in $S_\ell$ have the same neighbourhood in $[m+1,n]$ it follows they have the same neighbourhood in $[m^\prime+1,n]$ since $m < m^\prime$ so  $x$ and $y$ must sit in the same $[m^\prime+1,n]$-similar set $S^\prime_{\ell^\prime}$ for some $\ell^\prime \in [1,k^\prime]$.
\end{proof}

\begin{prop}
For any $\delta=(\alpha,\beta,\gamma)$ and any $n \in \mathbb{N}$, \[\mathcal{M}^\beta(n) \le \mathcal{N}^\delta([1,n])+1.\]
\end{prop}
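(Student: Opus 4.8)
The plan is to relate the bond-graph $B^\beta([1,n])$ directly to the two-row graph $T^\delta([1,n])$, and then compare the relevant similarity partitions. Recall that $\mathcal{M}^\beta(n) = \sup_{m<n} \mu(B^\beta([1,n]),[1,m])$, while $\mathcal{N}^\delta([1,n]) = \mu(T^\delta([1,n]),R_1([1,n]))$. So it suffices to show that for \emph{each} fixed $m<n$ we have $\mu(B^\beta([1,n]),[1,m]) \le \mathcal{N}^\delta([1,n])+1$; taking the supremum over $m$ then gives the claim.

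Fix $m<n$. The key observation is that adjacency in $B^\beta([1,n])$ between a vertex $x \in [1,m]$ and a vertex $y \in [m+1,n]$ is governed exactly by whether $(x,y)\in\beta$ — \emph{unless} $y = x+1$ (i.e.\ $m+1 = x+1$, so $x=m$), since $\beta$ only contains pairs at distance $>1$ and consecutive columns may be joined by $\alpha$-edges rather than bonds. In the two-row graph, by contrast, $v_{x,1} \sim v_{y,2}$ holds iff $(x,y)\in\beta$ when $|x-y|>1$, iff $\alpha_x$-dependent when $y=x+1$, and iff $\gamma_x$ when $x=y$. So for $x,x'\in[1,m-1]$ (excluding the single troublesome column $m$), the neighbourhood of $x$ in $[m+1,n]$ within $B^\beta$ equals $\{y\in[m+1,n]:(x,y)\in\beta\}$, which is precisely the neighbourhood of $v_{x,1}$ in $R_2([m+1,n])$ inside $T^\delta([1,n])$. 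Two such columns $x,x'$ are $[m+1,n]$-similar in $B^\beta([1,n])$ exactly when $v_{x,1}$ and $v_{x',1}$ have the same neighbourhood in $R_2([m+1,n])\subseteq R_2([1,n])$.

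The comparison then runs as follows. The number of $R_2([1,n])$-similarity classes of $R_1([1,n])$ is $\mathcal{N}^\delta([1,n])$; restricting attention to the smaller row-segment $R_2([m+1,n])$ can only merge classes, never split them, so the vertices $\{v_{x,1}:x\in[1,n]\}$ fall into at most $\mathcal{N}^\delta([1,n])$ classes under ``same neighbourhood in $R_2([m+1,n])$''. In particular the columns $[1,m-1]$ occupy at most $\mathcal{N}^\delta([1,n])$ of these classes, hence form at most $\mathcal{N}^\delta([1,n])$ of the $[m+1,n]$-similarity classes of $[1,m]$ in $B^\beta([1,n])$. The only vertex of $[1,m]$ not covered by this argument is the single column $m$, which can contribute at most one additional class. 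Therefore $\mu(B^\beta([1,n]),[1,m]) \le \mathcal{N}^\delta([1,n]) + 1$, and taking the supremum over $m<n$ completes the proof.

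The main obstacle — really the only subtlety — is the off-by-one coming from column $m$: the bond-set deliberately omits distance-$1$ pairs, so the column $m$ immediately left of the cut $[m+1,n]$ need not have its $B^\beta$-neighbourhood match any two-row neighbourhood, and must be accounted for separately. This is exactly where the ``$+1$'' in the statement comes from. One should also check the trivial edge cases ($m=1$, or $n$ small) but these cause no difficulty.
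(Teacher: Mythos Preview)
Your proof is correct and follows essentially the same route as the paper: both isolate the single column $m$ (whose distance-$1$ relationship to $m+1$ is handled by $\alpha$ in $T^\delta$ but is simply a non-edge in $B^\beta$), and observe that for every other column $x\in[1,m-1]$ the $[m+1,n]$-neighbourhood in $B^\beta$ coincides with the $R_2([m+1,n])$-neighbourhood of $v_{x,1}$ in $T^\delta$, so these columns occupy at most $\mathcal{N}^\delta([1,n])$ similarity classes. Your write-up is, if anything, slightly more explicit than the paper's about why the ``$+1$'' arises and why coarsening from $R_2([1,n])$ to $R_2([m+1,n])$ only merges classes.
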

\begin{proof}
In the two-row graph $T^\delta([1,n])$ partition $R_1([1,n])$ into $R_2([1,n])$-similar equivalence classes $\{W_i\}$ so that two vertices $v_{x,1}$ and $v_{y,1}$ are in the same set $W_i$ if they have the same neighbourhood in $R_2([1,n])$. By definition the number of such sets is $\mu(T^\delta([1,n]),R_1([1,n]))=\mathcal{N}^\delta([1,n])$. For $m<n$ partition $[1,m]$ into $s$ sets $\{P_i\}$ such that $P_i=\{j: v_{j,1} \in W_i\}$. Then  $s$ is no more than the number of sets in $\{W_i\}$ by definition, but no less than $\mu(B^\beta([1,n]),[1,m])-1$, the number of equivalence classes that are $[m+1,n]$-similar (excluding, possibly, vertex $m$). This holds for all $m<n$, so
\[\mathcal{M}^\beta(n)-1=\sup_{m<n} \mu(B^\beta([1,n]),[1,m])-1 \le \mu(T^\delta([1,n]),R_1([1,n]))=\mathcal{N}^\delta([1,n]).\]
\end{proof}

%
%
%

\subsection{Veins and Slices}\label{sect:veins_slices}

We will start by considering only graph classes $\GGG^\delta$ for $\delta=(\alpha,\beta,\gamma)$ in which $\alpha$ is an infinite word from the alphabet $\{0,2\}$ and then extend to the case where $\alpha$ is an infinite word from the alphabet $\{0,1,2,3\}$.

Consider a specific embedding of a graph $G=(V,E) \in \CCC$ in $\PPP^\delta$, and recall that the induced subgraph of $G$ on the vertices $V\cap C_{[j,j+k-1]}$ is denoted $G_{[j,j+k-1]}$.
 
Let $\alpha$ be an infinite word over the alphabet $\{0,2\}$. A \emph{vein} $\mathcal{V}$ of $G_{[j,j+k-1]}$ is a set of $t \le k$ vertices $\{v_s,\dots, v_{s+t-1}\}$ in consecutive columns such that $v_y \in V \cap C_y$ for each $y \in \{s, \dots , s+t-1 \}$ and for which $v_y \sim v_{y+1}$ for all $y \in \{s, \dots , s+t-2 \}$. 

We will call a vein of length $k$ a \emph{full} vein and a vein of length $<k$ a \emph{part} vein.  Note that as $\alpha$ comes from the alphabet $\{0,2\}$, for a vein $\{v_s,\dots, v_{s+t-1}\}$, $v_{y+1}$ is no higher than $v_y$ for each $y \in \{s, \dots , s+t-2 \}$. A horizontal row of $k$ vertices in $G_{[j,j+k-1]}$ is a full vein.

As $G$ is $\Free(H^\delta_{j,1}(k,k))$ we know that no set of vertices of $G$ induces $H^\delta_{j,1}(k,k)$.  We consider this in terms of disjoint full veins of $G_{[j,j+k-1]}$. Note that $k$ rows of vertices between column $j$ and column $j+k-1$ are a set of $k$ disjoint full veins and induce a graph isomorphic to $H^\delta_{j,1}(k,k)$. There are other sets of $k$ disjoint full veins that form a graph isomorphic to $H^\delta_{j,1}(k,k)$, but some sets of $k$ full veins do not. Our first task is to clarify when a set of $k$ full veins has this property.

Let $\{v_j,\dots, v_{j+k-1}\}$ be a full vein such that each vertex $v_x$ has coordinates $(x,u_x)$ in $\PPP$, observing that $u_{x+1}\le u_x$ for $x \in [j,j+k-2]$.  We construct an \emph{upper border} to be a set of vertical coordinates $\{w_j,\dots, w_{j+k-1}\}$ using the following procedure:
\begin{enumerate}[label=(\arabic*)]
\item Set $w_j=u_j$,
\item Set $x=j+1$,
	\item if $\alpha_{x-1}=2$ set $w_x=u_{x-1}$, 
	\item if $\alpha_{x-1}=0$ set $w_x=w_{x-1}$,
	\item set $x=x+1$,
	\item if $x=j+k$ terminate the procedure, otherwise return to step $(3)$.
\end{enumerate}

Given a full vein $\mathcal{V}=\{v_j,\dots, v_{j+k-1}\}$, define the \emph{fat vein} $\mathcal{V}^f=\{v_{x,y} \in V(G_{[j,j+k-1]}):x \in [j,j+k-1],y \in [u_x,w_x]\}$ (See examples shown in Figure \ref{fig-col1}).

Let $\mathcal{V}_1$ and $\mathcal{V}_2$ be two full veins. Then we say they are \emph{independent} if $\mathcal{V}^f_1 \cap \mathcal{V}^f_2 = \emptyset$ i.e. their corresponding fat veins are disjoint.

\begin{prop} \label{prop:full_veins}
$G_{[j,j+k-1]}$ cannot contain more than $(k-1)$ independent full veins.
\end{prop}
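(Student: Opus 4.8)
The plan is to establish the contrapositive in a sharp form: $k$ pairwise independent full veins in $G_{[j,j+k-1]}$ would, by taking the $k$ thin veins themselves (one vertex per column each), exhibit an induced copy of $H^\delta_{j,1}(k,k)$ inside $G$, contradicting $G\in\Free(H^\delta_{j,1}(k,k))$. In other words, independence is exactly the promised condition under which a $k$-tuple of full veins ``looks like'' $k$ rows. So I would suppose $\mathcal V_1,\dots,\mathcal V_k$ are pairwise independent, write $\mathcal V_i=\{v^i_j,\dots,v^i_{j+k-1}\}$, let $u^i_x$ be the height of $v^i_x$ and $w^i_x$ its upper border, so that $\mathcal V^f_i$ meets column $x$ in the interval $[u^i_x,w^i_x]$, with $u^i_j=w^i_j$, $u^i_x\le w^i_x$, and $u^i_{x+1}\le u^i_x$ throughout.

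First I would show that the vertical order of the fat veins does not change from column to column: if $u^i_x>w^{i'}_x$ then $u^i_{x+1}>w^{i'}_{x+1}$. If $\alpha_x=0$, the matching forces $u^i_{x+1}=u^i_x$ and the border rule gives $w^i_{x+1}=w^i_x$, so both intervals are unchanged. If $\alpha_x=2$, the border rule gives $w^i_{x+1}=u^i_x$, so a flip would yield $u^{i'}_{x+1}>w^i_{x+1}=u^i_x$, hence $u^{i'}_x\ge u^{i'}_{x+1}>u^i_x$, contradicting $u^i_x>w^{i'}_x\ge u^{i'}_x$. Hence, after relabelling, I may assume $\mathcal V^f_1,\dots,\mathcal V^f_k$ occur in this top-to-bottom order in every column, so in particular $u^1_x>u^2_x>\dots>u^k_x$ for all $x$ and the $k^2$ vertices $v^i_x$ are distinct.

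Next I would check that $v^i_x\mapsto v_{x,\,k+1-i}$ is an isomorphism of $G[\{v^i_x\}]$ onto $H^\delta_{j,1}(k,k)$. Since $G$ embeds in $\PPP^\delta$, within a column $v^i_x\sim v^{i'}_x$ iff $\gamma_x=1$, and between columns at distance $>1$ the pair $v^i_x, v^{i'}_{x'}$ is adjacent iff $(x,x')\in\beta$ -- in both cases matching the grid, whose $\gamma$- and $\beta$-edges ignore rows. Only consecutive columns $x,x+1$ remain. If $\alpha_x=0$, the vein relation forces $u^i_{x+1}=u^i_x$, so $v^i_x\sim v^{i'}_{x+1}$ iff $u^i_x=u^{i'}_{x+1}$ iff $i=i'$, exactly as for a $0$-link in the grid. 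If $\alpha_x=2$: for $i=i'$ both are adjacent; for $i<i'$, the chain $u^i_x>u^{i'}_x\ge u^{i'}_{x+1}$ gives $v^i_x\sim v^{i'}_{x+1}$, matching the edge $v_{x,k+1-i}v_{x+1,k+1-i'}$ (higher row to lower row across a $2$), while independence in column $x+1$ gives $u^i_{x+1}>w^{i'}_{x+1}=u^{i'}_x$, so $v^{i'}_x\not\sim v^i_{x+1}$, matching the non-edge $v_{x,k+1-i'}v_{x+1,k+1-i}$ (lower row to higher row across a $2$). This produces $H^\delta_{j,1}(k,k)\le G$, a contradiction, so $G_{[j,j+k-1]}$ has at most $k-1$ independent full veins.

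I expect the main obstacle -- and the whole point of the upper-border definition -- to be the $\alpha_x=2$ case: across a $2$-link a higher vein cannot ``drop past'' a lower one without their fat veins meeting, and this single fact does double duty, ruling out the order-flips in the first step and supplying the one non-edge $v^{i'}_x\not\sim v^i_{x+1}$ that the grid forces in the second. Everything else is routine bookkeeping; the extension to general $\alpha$ over $\{0,1,2,3\}$ is handled separately, as this proposition assumes $\alpha\in\{0,2\}$.
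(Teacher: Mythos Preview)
Your proof is correct and follows the same strategy as the paper's: show that $k$ independent full veins would induce a copy of $H^\delta_{j,1}(k,k)$ in $G$, a contradiction. Your version is actually more complete than the paper's, since you explicitly establish the column-by-column order preservation of the fat veins and then invoke independence (via $u^i_{x+1}>w^{i'}_{x+1}=u^{i'}_x$) to obtain the crucial non-edge in the $\alpha_x=2$ case, whereas the paper simply asserts the relevant (non-)adjacencies in its cases~(c) and~(d) without spelling out why independence is needed.
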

\begin{proof}
We claim that $k$ independent full veins $\{\mathcal{V}_1, \dots, \mathcal{V}_k\}$ induce the forbidden graph $H^\delta_{j,1}(k,k)$. 

Remembering $v_{x,y}$ is the vertex in the grid $\PPP$ in the $x$-th column and $y$-th row, let $w_{x,y}$ be the vertex in the $y$-th full vein $\mathcal{V}_y$ in column $x$. We claim the mapping $\phi(w_{x,y}) \rightarrow v_{x,y}$ is an isomorphism.

Consider vertices $w_{x,y} \in \mathcal{V}_y$ and $w_{s,t} \in \mathcal{V}_t$ for $t \ge y $. Then
\begin{enumerate}[label=(\alph*)]
\item If $t=y$ (i.e the vertices are on the same vein) then both $w_{x,y} \sim w_{s,t}$ and $v_{x,y} \sim v_{s,t}$ if and only if $|x-s|=1$ or $(x,s) \in \beta$,
\item If $t>y$ and $x=s$ then both $w_{x,y} \sim w_{s,t}$ and $v_{x,y} \sim v_{s,t}$ if and only if $\gamma_x=1$,
\item If $t>y$ and $s=x+1$  then both $w_{x,y} \not\sim w_{s,t}$ and $v_{x,y} \not\sim v_{s,t}$,
\item If $t>y$ and $s=x-1$ then both $w_{x,y} \sim w_{s,t}$ and $v_{x,y} \sim v_{s,t}$ if and only if $\alpha_s=2$,
\item If $t>y$ and $|s-x|>1$ then both $w_{x,y} \sim w_{s,t}$ and $v_{x,y} \sim v_{s,t}$ if and only if $(x,s) \in \beta$.
\end{enumerate}

Hence, $w_{x,y} \sim w_{s,t}$ if and only if $v_{x,y} \sim v_{s,t}$ and $\phi$ is an isomorphism from $k$ independent full veins to $H^\delta_{j,1}(k,k)$.
\end{proof}
%
%
%
%
%
\subsection{Vertex colouring}\label{sect:vert_col}

Our objective is to identify conditions on (recurrent) $\delta \in \Delta$ that make $\GGG^\delta$ a minimal class of unbounded  clique-width. For such a $\delta$ it is sufficient to show that any graph $G$ in a proper hereditary subclass $\CCC$ has bounded  linear clique-width. In order to do this we will partition $G$ into manageable sections (which we will call "panels"), the divisions between the panels chosen so that they can be built separately and then 'stuck' back together again, using a linear clique-width expression requiring only a bounded number of labels. In this section we describe a vertex colouring that will lead (in Section \ref{Sect:panels}) to the construction of these panels. 

As previously observed, for any subclass $\CCC$ there exist $j$ and $k$ such that $\CCC \subseteq \Free(H^\delta_{j,1}(k,k))$.
As $\delta$ is recurrent, if we let $\delta^*=\delta_{[j,j+k-1]}$ be the $k$-factor that defines the forbidden graph  $H^\delta_{j,1}(k,k)$, we can find $\delta^*$ in $\delta$ infinitely often, and we will use these instances of $\delta^*$ to divide our embedded graph $G$ into the required panels. 
 
Firstly, we will construct a maximal set $\mathbb{B}$ of independent full veins for $G_{[j,j+k-1]}$, a section of $G$ that by Proposition \ref{prop:full_veins} cannot have more than $(k-1)$ independent full veins. We start with the lowest full vein (remembering that the rows of the grid $\PPP$ are indexed from the bottom) and then keep adding the next lowest independent full vein until the process is exhausted.  

Note that the next lowest independent full vein is unique because if we have two full veins $\mathcal{V}_1, \mathcal{V}_2$ with vertices 
$\{v_j,\dots, v_{j+k-1}\}$ and $\{v^{\prime}_j,\dots, v^{\prime}_{j+k-1}\}$ respectively then they can be combined to give $\{min(v_j,v^{\prime}_j),\dots,min(v_{j+k-1},v^{\prime}_{j+k-1})\}$ which is a full vein with a vertex in each column at least as low as the vertices of $\mathcal{V}_1$ and $\mathcal{V}_2$.

Let $\mathbb{B}$ contain $b<k$ independent full veins, numbered from the bottom as $\mathcal{V}_1, \cdots, \mathcal{V}_b$ such that any other full vein not in $\mathbb{B}$ must have a vertex in common with a fat vein $\mathcal{V}^f_y$ corresponding to one of the veins $\mathcal{V}_y$ of $\mathbb{B}$.  

Let $u_{x,y}$ be the lowest vertical coordinate and $w_{x,y}$ the highest vertical coordinate of vertices in $\mathcal{V}^f_y \cap C_x$. We define $\mathcal{S}_0 = \{v_{x,y}\in V(G_{[j,j+k-1]}) : x \in [j,j+k-1],  y < u_{x,1} \}$, $\mathcal{S}_b = \{v_{x,y}\in V(G_{[j,j+k-1]}) : x \in [j,j+k-1],  y > w_{x,b} \}$, and for $y=1,\dots,b-1$ we define:

\[ \mathcal{S}_i = \{v_{x,y}\in V(G_{[j,j+k-1]}) : x \in [j,j+k-1], w_{x,i} < y < u_{x,i+1} \}\]

This gives us $b+1$ \emph{slices} $\{\mathcal{S}_0, \mathcal{S}_1, \cdots, \mathcal{S}_b \}$. 

We partition the vertices in the fat veins and the slices into sets which have similar neighbourhoods, which will facilitate the division of $G$ into panels. We colour the vertices of $G_{[j,j+k-1]}$ so that each slice has green/pink vertices to the left and red vertices to the right of the partition, and each fat vein has blue vertices (if any) to the left and yellow vertices to the right. Examples of vertex colourings are shown in Figure \ref{fig-col1}.

Colour the vertices of each slice $\mathcal{S}_i$ as follows:
\begin{itemize}
\item Colour any vertices in the left-hand column green. Now colour green any remaining vertices in the slice that are connected to one of the green left-hand column vertices by a part vein that does not have a vertex in common with any of the fat veins corresponding to the full veins in $\mathbb{B}$. 
\item Locate the column $t$ of the right-most green vertex in the slice. If there are no green vertices set $t=s=j$. If $t>j$ then choose $s$ in the range $j \le s < t$ such that $s$ is the highest column index for which $\alpha_{s}=2$. If there are no columns before $t$ for which $\alpha_{s}=2$ then set $s=j$. Colour pink any vertices in the slice (not already coloured) in columns $j$ to $s$ which are below a vertex already coloured green.
\item Colour any remaining vertices in the slice red. 
\end{itemize}
Note that no vertex in the right-hand column can be green because if there was such a vertex then this would contradict the fact that there can be no full veins other than those which have a vertex in common with one of the fat veins corresponding to the full veins in $\mathbb{B}$.  Furthermore, no vertex in the right hand column can be pink as this would contradict the fact that every pink vertex must lie below a green vertex in the same slice. 

Colour the vertices of each fat vein $\mathcal{V}^f_i$ as follows:
\begin{itemize}
\item Let $s$ be the column as defined above for the slice immediately above the fat vein. If $s=j$ colour the whole fat vein yellow. If $s>j$ colour vertices of the fat vein in columns $j$ to $s$ blue and the rest of the vertices in the fat vein yellow.
\end{itemize}


\begin{figure}
\centering
\begin{tikzpicture}[yscale=0.6,xscale=0.6,
vertex4/.style={circle,draw=white,minimum size=6,fill=white},]

\foreach \x/\y in {1/4,1/15,1/22,2/4,2/21,2/22,3/3,
	3/20,3/21}
		\node[vertex,blue!70!black] (\x-\y) at (\x,\y) {};
	\foreach \x/\y in {2/15,3/12,3/13,3/14,
	4/3,4/11,4/18,4/19,5/3,5/9,5/10,5/11,5/18,
	6/3,6/9,6/18,7/1,7/5,7/2,7/3,7/6,7/7,7/8,7/9,7/16,7/18}
		\node[vertex,yellow!70!black] (\x-\y) at (\x,\y) {};	
	\foreach \x/\y in {1/3,1/5,1/9,1/12,1/17,1/18,1/23,1/24,
	2/1,2/7,2/16,2/23,3/6,3/7,3/22,4/4,4/21}
		\node[vertex,green!70!black] (\x-\y) at (\x,\y) {};
	\foreach \x/\y in {2/19,3/8,3/17,3/24,
	4/1,4/8,4/14,4/24,5/5,5/6,5/7,5/13,5/14,5/22,5/23,5/24,
	6/4,6/12,6/11,6/13,6/14,6/21,7/20,7/21,7/24}
		\node[vertex,red!70!black] (\x-\y) at (\x,\y) {};
		
	{
	\draw[very thick,blue] (1-4)--(2-4)--(3-3)--(4-3)
	--(5-3)--(6,3)--(7-1);
	\draw[very thick,blue] (1-15)--(2-15)--(3-12)--(4-11)
	--(5-9)--(6-9)--(7-5);
	\draw[very thick,blue] (1-22)--(2-21)--(3-20)--(4-18)
	--(5-18)--(6-18)--(7-16);
	\draw[blue] (7-3)--(6-3)--(7-2);
	\draw[blue] (2-15)--(3-14)--(4-11)--(3-13)--(2-15);
	\draw[blue] (4-11)--(5-11)--(6-9)--(5-10)--(4-11);
	\draw[blue] (7-9)--(6-9)--(7-8);.
	\draw[blue] (7-7)--(6-9)--(7-6);
	\draw[blue] (1-22)--(2-22)--(3-21)--(4-18);
	\draw[blue] (2-22)--(3-20)--(4-19);
	\draw[blue] (2-21)--(3-21)--(4-19)--(5-18);
	\draw[blue] (6-18)--(7-18);,
	
	\draw[green] (1-24)--(2-23)--(3-22)--(4-21);
	\draw[green] (1-23)--(2-23);
	\draw[green] (1-18)--(2-16)--(1-17);	
	\draw[green] (1-12)--(2-7)--(1-9);
	\draw[green] (2-7)--(3-7)--(4-4)--(3-6)--(2-7);
	\draw[green] (1-3)--(2-1);
	}

	\node (99)[label={below:Example $1$}] [vertex4] at (4,1) {};

\begin{scope}[shift={(8,0)}]

\foreach \x/\y in {1/4,1/15,1/22,2/4,2/21,2/22,3/4,
	3/20,3/21}
		\node[vertex,blue!70!black] (\x-\y) at (\x,\y) {};
	\foreach \x/\y in {2/15,3/12,3/13,3/14,
	4/3,4/4,4/9,4/10,4/11,4/18,4/19,5/3,5/9,5/10,5/11,5/18,5/19,
	6/3,6/4,6/9,6/11,6/18,6/19,7/3,7/4,7/9,7/11,7/18,7/19}
		\node[vertex,yellow!70!black] (\x-\y) at (\x,\y) {};	
	\foreach \x/\y in {1/3,1/5,1/9,1/12,1/17,1/18,1/23,1/24,
	2/1,2/7,2/16,2/23,3/6,3/7,3/22,4/5,4/21,
	5/5,5/21,6/5,6/21}
		\node[vertex,green!70!black] (\x-\y) at (\x,\y) {};
	\foreach \x/\y in {2/19,3/8,3/17,3/24,
	4/1,4/8,4/14,4/24,
	5/6,5/7,5/13,5/14,5/23,5/24,
	6/12,6/13,6/14,6/24,
	7/1,7/2,7/6,7/7,7/8,7/14,7/16,7/24}
		\node[vertex,red!70!black] (\x-\y) at (\x,\y) {};
		
	{
	\draw[very thick,blue] (1-4)--(2-4)--(3-4)--(4-3)
	--(5-3)--(6,3)--(7-3);
	\draw[very thick,blue] (1-15)--(2-15)--(3-12)--(4-9)
	--(5-9)--(6-9)--(7-9);
	\draw[very thick,blue] (1-22)--(2-21)--(3-20)--(4-18)
	--(5-18)--(6-18)--(7-18);
	\draw[blue] (2-15)--(3-14)--(4-11)--(3-13)--(2-15);
	\draw[blue] (7-9)--(6-9);
	\draw[blue] (1-22)--(2-22)--(3-21)--(4-18);
	\draw[blue] (2-22)--(3-20)--(4-19);
	\draw[blue] (2-21)--(3-21)--(4-19)--(5-19)--
	(6-19)--(7-19);
	\draw[blue] (3-14)--(4-10)--(3-13)--(4-9);
	\draw[blue] (4-11)--(3-12)--(4-10);
	\draw[blue] (4-9)--(3-14);
	\draw[blue] (3-4)--(4-4);
	\draw[blue] (4-10)--(5-10);
	\draw[blue] (4-11)--(5-11)--(6-11)--(7-11);
	\draw[blue] (6-4)--(7-4);
	
	\draw[green] (1-24)--(2-23)--(3-22)--(4-21)
	--(5-21)--(6-21);
	\draw[green] (1-23)--(2-23);
	\draw[green] (1-18)--(2-16)--(1-17);	
	\draw[green] (1-12)--(2-7)--(1-9);
	\draw[green] (2-7)--(3-7)--(4-5)--(3-6)--(2-7);
	\draw[green] (4-5)--(5-5)--(6-5);
	\draw[green] (1-3)--(2-1);	
	}

\node (99)[label={below:Example $2$}] [vertex4] at (4,1) {};

\end{scope}

\begin{scope}[shift={(16,-8)}]
	\foreach \x/\y in {1/24,2/23,3/21,4/12, 
	5/12,6/12,7/12,8/12,9/8,
	10/5}
		\node[vertex,blue!70!black] (\x-\y) at (\x,\y) {};
	\foreach \x/\y in {2/24,3/22,3/23,
	4/13,4/14,4/15,4/16,4/17,4/18,4/19,4/20,4/21,
	5/14,5/15,5/18,5/19,5/20,
	6/13,6/14,6/15,6/18,6/19,7/15,7/16,7/19,7/21,
	8/15,8/19,8/21}
		\node[vertex,blue!70!black] (\x-\y) at (\x,\y) {};
	\foreach \x/\y in {9/8,9/9,9/10,9/11,9/12,
	10/5,10/6,10/7,10/8}
		\node[vertex,yellow!70!black] (\x-\y) at (\x,\y) {};	
	\foreach \x/\y in {1/36,2/35,3/33,4/30, 
	5/30,6/30,7/30,8/30,9/19}
		\node[vertex,green!70!black] (\x-\y) at (\x,\y) {};	
	\foreach \x/\y in {1/35,2/33,3/31,4/28,4/27,4/26,4/25, 
	5/27,6/27,7/27,8/27,9/16}
		\node[vertex,green!70!black] (\x-\y) at (\x,\y) {};	
	\foreach \x/\y in {5/23,5/26,6/29,6/26,6/25,6/24,6/23,7/29,
	7/26,7/25,7/24,7/23,8/24,8/23}
		\node[vertex,pink!70!black] (\x-\y) at (\x,\y) {};
	\foreach \x/\y in {3/36,4/35,5/35,6/35,7/35,8/35,9/34}
		\node[vertex,red!70!black] (\x-\y) at (\x,\y) {};						
	{
	\draw[very thick,blue] (1-24)--(2-23)--(3-21)--(4-12)
	--(5-12)--(6,12)--(7-12)--(8-12)--(9-8)--(10-5);
	\draw[blue] (1-24)--(2-24)--(3-23)--(4-21);
	\draw[blue] (2-23)--(3-23)--(4-20)--(5-20);
	\draw[blue] (2-24)--(3-22)--(4-21);
	\draw[blue] (2-24)--(3-21)--(4-21);
	\draw[blue] (2-23)--(3-22)--(4-20);
	\draw[blue] (3-23)--(4-19)--(5-19)--(6-19)--(7-19)--(8-19);
	\draw[blue] (3-23)--(4-18);\draw[blue] (3-23)--(4-17);
	\draw[blue] (3-23)--(4-16);\draw[blue] (3-23)--(4-15);
	\draw[blue] (3-23)--(4-14);\draw[blue] (3-23)--(4-13);
	\draw[blue] (3-22)--(4-19);
	\draw[blue] (3-22)--(4-18);\draw[blue] (3-22)--(4-17);
	\draw[blue] (3-22)--(4-16);\draw[blue] (3-22)--(4-15);
	\draw[blue] (3-22)--(4-14);\draw[blue] (3-22)--(4-13);
	\draw[blue] (3-21)--(4-20);\draw[blue] (3-21)--(4-19);
	\draw[blue] (3-21)--(4-18);\draw[blue] (3-21)--(4-17);
	\draw[blue] (3-21)--(4-16);\draw[blue] (3-21)--(4-15);
	\draw[blue] (3-21)--(4-14);\draw[blue] (3-21)--(4-13);
	\draw[blue] (4-18)--(5-18)--(6-18);
	\draw[blue] (4-15)--(5-15)--(6-15)--(7-15)--(8-15);
	\draw[blue] (4-14)--(5-14)--(6-14);
	\draw[blue] (8-19)--(9-12);\draw[blue] (8-19)--(9-11);
	\draw[blue] (8-19)--(9-10);\draw[blue] (8-19)--(9-9);
	\draw[blue] (8-19)--(9-8);
	\draw[blue] (8-21)--(9-12);\draw[blue] (8-21)--(9-11);
	\draw[blue] (8-21)--(9-10);\draw[blue] (8-21)--(9-9);
	\draw[blue] (8-21)--(9-8);
	\draw[blue] (8-15)--(9-12);\draw[blue] (8-15)--(9-11);
	\draw[blue] (8-15)--(9-10);\draw[blue] (8-15)--(9-9);
	\draw[blue] (8-15)--(9-8);
	\draw[blue] (8-12)--(9-12);\draw[blue] (8-12)--(9-11);
	\draw[blue] (8-12)--(9-10);\draw[blue] (8-12)--(9-9);
	\draw[blue] (9-12)--(10-8);\draw[blue] (9-12)--(10-7);
	\draw[blue] (9-12)--(10-6);\draw[blue] (9-12)--(10-5);
	\draw[blue] (9-11)--(10-8);\draw[blue] (9-11)--(10-7);
	\draw[blue] (9-11)--(10-6);\draw[blue] (9-11)--(10-5);
	\draw[blue] (9-10)--(10-8);\draw[blue] (9-10)--(10-7);
	\draw[blue] (9-10)--(10-6);\draw[blue] (9-10)--(10-5);
	\draw[blue] (9-9)--(10-8);\draw[blue] (9-9)--(10-7);
	\draw[blue] (9-9)--(10-6);\draw[blue] (9-9)--(10-5);
	\draw[blue] (9-8)--(10-8);\draw[blue] (9-8)--(10-7);
	\draw[blue] (9-8)--(10-6);

	\draw[green] (1-36)--(2-35)--(3-33)--(4-30)
	--(5-30)--(6,30)--(7-30)--(8-30);
	\draw[green] (1-36)--(2-33)--(1-35)--(2-35);
	\draw[green] (2-35)--(3-31)--(2-33)--(3-33);
	\draw[green] (3-33)--(4-28)--(3-31)--(4-30);
	\draw[green] (3-33)--(4-27)--(3-31)--(4-26)--(3-33)--
	(4-25)--(3-31);
	\draw[green] (4-27)--(5-27)--(6-27)--(7-27)--(8-27);
	\draw[green] (9-16)--(8-30)--(9-19);
	\draw[green] (9-16)--(8-27)--(9-19);
	\draw[pink] (5-26)--(6-26)--(7-26);
	\draw[pink] (6-25)--(7-25);
	\draw[pink] (6-29)--(7-29);
	\draw[pink] (6-24)--(7-24)--(8-24);
	\draw[pink] (5-23)--(6-23)--(7-23)--(8-23);
		}

\node (99)[label={below:Example $3$}] [vertex4] at (5,9) {};
	
\end{scope}	
	
\end{tikzpicture}\par
\caption{Examples of vein and slice colouring -- a $222222$, a $222000$ and a $222000022$ factor, with vertices coloured blue, green, pink, red and yellow as described. The only edges shown are the veins (bold blue), other edges in the fat veins (blue), part veins that start on the left column but do not reach the right column (green) and related pink rows.}
\label{fig-col1}
\end{figure}
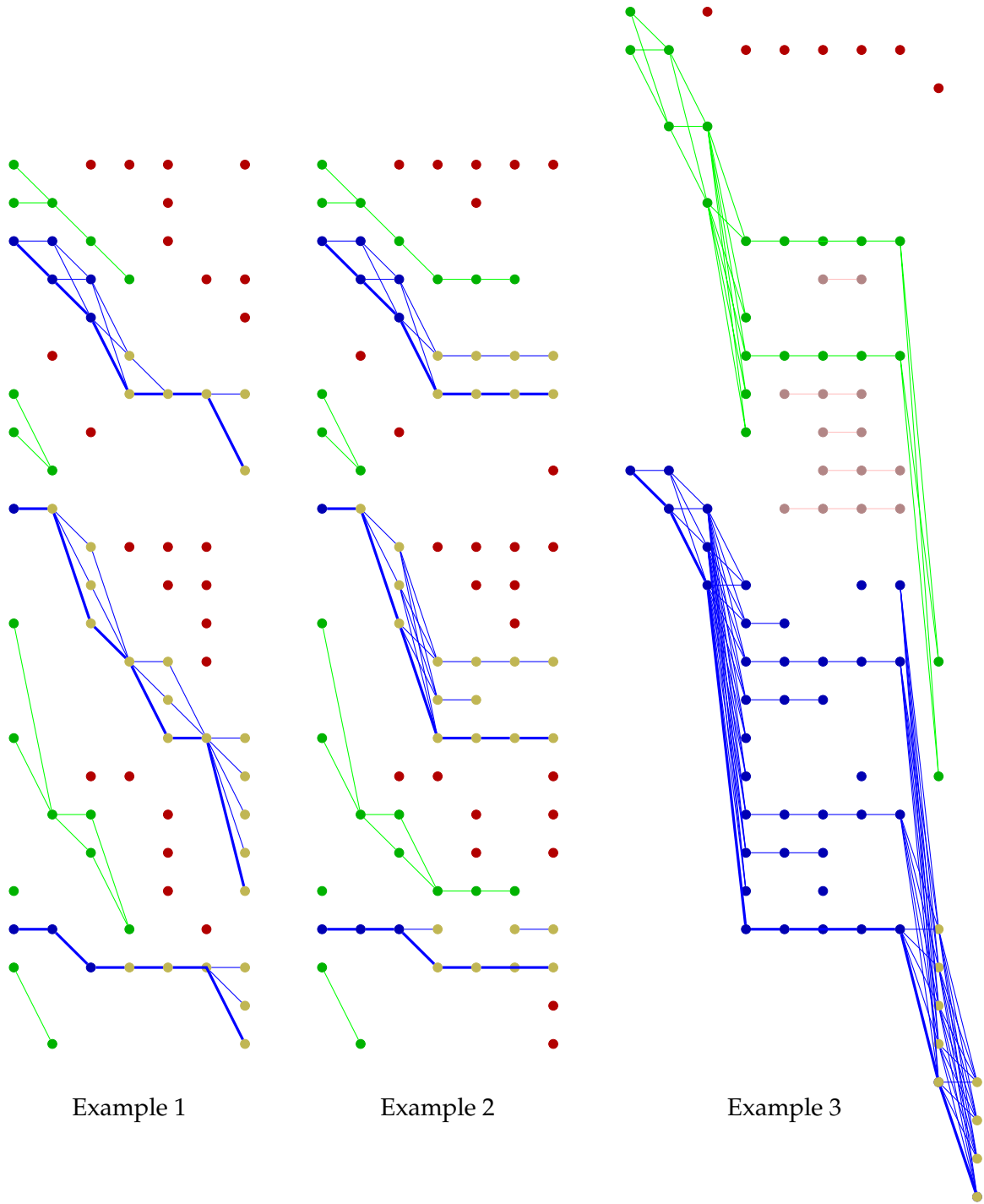


When we create a clique-width expression we will be particularly interested in the edges between the blue and green/pink vertices to the left and the red and yellow vertices to the right.

\begin{prop}\label{red}
Let $v$ be a red vertex in column $x$ and slice $\mathcal{S}_i$.

If $u$ is a blue, green or pink vertex in column $x-1$ then 
\[uv \in E(G) \text{ if and only if } \alpha_{x-1}=2 \text{ and } u \in  \mathcal{V}^f_{i+1} \cup \mathcal{S}_{i+1} \cup \cdots \cup \mathcal{V}^f_{b} \cup \mathcal{S}_b. \]

Similarly, if $u$ is a blue, green or pink vertex in column $x+1$ then 
\[uv \in E(G) \text{ if and only if } \alpha_{x}=2 \text{ and } u \in \mathcal{S}_{0} \cup \mathcal{V}^f_{1} \cup \mathcal{S}_{1} \cup \cdots \cup \mathcal{V}^f_{i} \cup \mathcal{S}_{i}. \]
\end{prop}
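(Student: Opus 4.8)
The plan is to prove both assertions together and read off the second from the first by the obvious left--right symmetry, so I would treat the ``column $x-1$'' statement in detail. The first remark is that the only edges of $\PPP^\delta$ joining two consecutive columns come from the relevant $\alpha$-link: by definition there are no $\beta$-bonds between consecutive columns, and $\gamma$-edges stay inside a single column. Since $\alpha$ is a word over $\{0,2\}$, the link between $C_{x-1}$ and $C_x$ is either a matching (when $\alpha_{x-1}=0$) or the chain with $v_{x-1,p}\sim v_{x,q}$ iff $p\ge q$ (when $\alpha_{x-1}=2$). Write $v=v_{x,h}$. The condition $v\in\mathcal{S}_i$ gives $w_{x,i}<h$, and when $\alpha_{x-1}=2$ the border construction gives $w_{x,i}=u_{x-1,i}$, whereas when $\alpha_{x-1}=0$ it gives $w_{x,i}=w_{x-1,i}$ and $u_{x,y}=u_{x-1,y}$ for every $y$, so that every fat vein and slice occupies exactly the same rows in columns $x-1$ and $x$. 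In both cases the whole question reduces to deciding, for each left-hand colour class, which rows $h'$ of column $x-1$ meet the relevant adjacency inequality.

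\textbf{The matching case ($\alpha_{x-1}=0$).} Here the right-hand side of the claimed equivalence is automatically false, so it suffices to show $v$ has no blue, green or pink neighbour in column $x-1$. With a matching the only candidate neighbour is $v_{x-1,h}$. If $v_{x-1,h}$ is blue it lies in some fat vein $\mathcal{V}^f_{i'}$, and since that fat vein occupies the same rows in column $x$, the vertex $v_{x,h}=v$ lies in $\mathcal{V}^f_{i'}$ too, hence is blue or yellow, not red. If $v_{x-1,h}$ is green, extending a fat-vein-avoiding part vein from a left-hand green vertex to it by the matching edge colours $v$ green. If $v_{x-1,h}$ is pink it sits below a green vertex of its slice, and applying the previous argument to that green vertex shows $v$ is green or pink. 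Each case contradicts $v$ being red, so there is no such edge.

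\textbf{The chain case ($\alpha_{x-1}=2$).} Now $v_{x-1,h'}\sim v$ iff $h'\ge h$, and $h>u_{x-1,i}$. For the ``if'' direction, a blue, green or pink vertex $u=v_{x-1,h'}$ of a strictly higher region $\mathcal{V}^f_{i+1}\cup\mathcal{S}_{i+1}\cup\cdots\cup\mathcal{V}^f_{b}\cup\mathcal{S}_b$ has $h'\ge u_{x-1,i+1}$, and since $h<u_{x,i+1}\le u_{x-1,i+1}$ this gives $h'>h$, whence $uv\in E(G)$. For the ``only if'' direction I must exclude an edge from $v$ to any blue, green or pink vertex lying in $\mathcal{S}_i$ or below. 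All lower fat veins and lower slices are disposed of at once by heights, since in column $x-1$ they occupy rows $<u_{x-1,i}<h$; so the only surviving candidates are a green or pink vertex of $\mathcal{S}_i$ and a blue vertex of $\mathcal{V}^f_i$, each in column $x-1$ at some row $h'\ge h$. A green $v_{x-1,h'}$ is excluded because extending its part vein by the chain edge $v_{x-1,h'}\sim v_{x,h}$ (valid as $h'\ge h$) would colour $v$ green; a pink $v_{x-1,h'}$ lies below a green vertex $v_{x-1,g}$ with $g>h'\ge h$, and the same extension from $v_{x-1,g}$ again colours $v$ green. This leaves the delicate case of a blue vertex of $\mathcal{V}^f_i$: its existence forces the parameter $s$ attached to the slice $\mathcal{S}_i$ immediately above $\mathcal{V}^f_i$ to satisfy $s>j$, hence $\mathcal{S}_i$ contains green vertices; following the part vein of the right-most green vertex of $\mathcal{S}_i$, whose column $t$ satisfies $t>s\ge x-1$, back to column $x-1$ produces a green vertex of $\mathcal{S}_i$ there, which, being green, avoids $\mathcal{V}^f_i$ and so lies at a row $>w_{x-1,i}\ge h'\ge h$; the chain edge from it to $v_{x,h}$, together with the usual extension, then colours $v$ green, contradicting redness. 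I expect this last step to be the main obstacle, since it is exactly where the definition of the blue part of a fat vein (columns $[j,s]$) and the green-colouring rule must be combined to manufacture a higher green vertex that ``captures'' $v$.

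Finally, the ``column $x+1$'' statement follows by the mirror argument: when $\alpha_x=2$ the chain joins $v_{x,h}$ to the vertices $v_{x+1,q}$ with $q\le h$, that is, to exactly the lower regions $\mathcal{S}_0\cup\mathcal{V}^f_1\cup\mathcal{S}_1\cup\cdots\cup\mathcal{V}^f_i\cup\mathcal{S}_i$; since part veins run from left to right and are non-increasing, the colour-propagation and part-vein extension arguments above all transfer with ``above'' and ``below'', and ``left'' and ``right'', interchanged.
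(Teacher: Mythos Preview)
Your column $x-1$ argument is essentially correct and in fact considerably more detailed than the paper's, which handles only the green-in-$\mathcal{S}_i$ case explicitly and declares ``the other adjacencies are straightforward''. Your treatment of the blue $u\in\mathcal{V}^f_i$ subcase via the right-most green vertex is a nice elaboration the paper omits entirely.

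However, your claim that the column $x+1$ statement ``follows by the mirror argument'' is too quick, and this is exactly the one place where the paper's own proof does something different. Part veins run left to right, so the two sides are genuinely asymmetric. In the $x-1$ case, a green $u\in\mathcal{S}_i$ adjacent to $v$ lets you \emph{extend} the part vein one step right into $v$, colouring $v$ green. In the $x+1$ case you need the opposite implication: if $u\in\mathcal{S}_i$ is green in column $x+1$ and $\alpha_x=2$, you must show $uv\in E(G)$. Suppose not; then the row of $u$ exceeds $h$, and the part vein carrying $u$ already passes through column $x$ at some row $\ge$ row of $u>h$, producing a green vertex strictly above $v$ in column $x$. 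This does not extend a vein into $v$ (they share a column), so instead one invokes the \emph{pink} rule: since $u$ is green in column $x+1\le t$ and $\alpha_x=2$ with $x<t$, the definition of $s$ forces $x\le s$, so $v$ lies in columns $[j,s]$ below a green vertex and should have been coloured pink, not red. That is the paper's argument, and it is not a mirror of your $x-1$ reasoning. Your final paragraph gestures at the asymmetry but does not supply this step; you should make it explicit.
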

\begin{proof}
Note that as $u$ and $v$ are in consecutive columns we need only consider $\alpha$-edges. 

If $u$ is green in column $x-1$ of $\mathcal{S}_i$ then red $v$ in column $x$ of $\mathcal{S}_i$ cannot be adjacent to $u$ as this would place red $v$ on a green part-vein which is a contradiction. Likewise, if $u$ is green in column $x+1$ of $\mathcal{S}_i$ then red $v$ in column $x$ of $\mathcal{S}_i$ must be adjacent to $u$ since if it was not adjacent to such a green vertex in the same slice then this implies the existence of a green vertex above the red vertex in the same column which contradicts the colouring rule to colour pink any vertex in columns $j$ to $s$ below a vertex coloured green.

The other adjacencies are straightforward.
\end{proof}

\begin{prop}\label{yellow}
Let $v$ be a yellow vertex in column $x$ and fat vein $\mathcal{V}^f_i$.

If $u$ is a blue, green or pink vertex in column $x-1$ then 
\[uv \in E(G) \text{ if and only if } \alpha_{x-1}=2 \text{ and } u \in  \mathcal{V}^f_{i} \cup \mathcal{S}_{i} \cup \cdots \cup \mathcal{V}^f_{b} \cup \mathcal{S}_b. \]

Similarly, if $u$ is a blue, green or pink vertex in column $x+1$ then 
\[uv \in E(G) \text{ if and only if } \alpha_{x}=2 \text{ and } u \in \mathcal{S}_{0} \cup \mathcal{V}^f_{1} \cup \mathcal{S}_{1} \cup \cdots \cup \mathcal{V}^f_{i-1} \cup \mathcal{S}_{i-1}. \]
\end{prop}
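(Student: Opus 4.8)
The plan is to run the argument of Proposition \ref{red} with slices and fat veins interchanged. The first point, exactly as there, is that $u$ and $v$ lie in consecutive columns, so only $\alpha$-edges are relevant, and $\alpha_i\in\{0,2\}$.

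If $\alpha_{x-1}=0$ the only candidate neighbour of $v=v_{x,y}$ in column $x-1$ is $v_{x-1,y}$, and a $0$-link leaves the vein $\mathcal{V}_i$ (and hence its upper border) at the same heights in columns $x-1$ and $x$, so $v_{x-1,y}\in\mathcal{V}^f_i$. Since $v$ is yellow, either $\mathcal{V}^f_i$ is entirely yellow (the case where the slice above it has $s=j$), or its blue part sits in columns $j$ to $s$ with $\alpha_{s}=2$; in the latter case $x$ exceeds $s$, so $x-1\ge s$, and $\alpha_{x-1}=0$ forces $x-1>s$. Either way $v_{x-1,y}$ is yellow, in particular not blue, green or pink, so there is no relevant edge -- consistent with the statement, which requires $\alpha_{x-1}=2$. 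The same reasoning applies with $x+1$ in place of $x-1$.

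Now suppose $\alpha_{x-1}=2$, so $v_{x-1,y'}\sim v_{x,y}$ exactly when $y'\ge y$. For the ``if'' direction I would use the geometry of the construction: since $v\in\mathcal{V}^f_i\cap C_x$, the $2$-link forces $y$ to be at most the upper border of $\mathcal{V}_i$ in column $x$, which equals the vein height of $\mathcal{V}_i$ in column $x-1$, i.e.\ $u_{x-1,i}$, the lowest row of $\mathcal{V}^f_i\cap C_{x-1}$. Because the regions $\mathcal{S}_0,\mathcal{V}^f_1,\mathcal{S}_1,\dots,\mathcal{V}^f_b,\mathcal{S}_b$ are stacked in increasing height inside every column, every vertex of column $x-1$ lying in $\mathcal{V}^f_i$ or in any higher region has height at least $u_{x-1,i}\ge y$, hence is adjacent to $v$. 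For the ``only if'' direction I would argue as in the green case of Proposition \ref{red}: if a blue, green or pink vertex $u=v_{x-1,y'}$ lying in $\mathcal{S}_{i-1},\mathcal{V}^f_{i-1},\dots,\mathcal{S}_0$ were adjacent to $v$, then $y'\ge y\ge u_{x,i}$, and following $u$'s green part-vein leftwards (or, for a pink $u$, the green vertex above it), together with $\mathcal{V}_i$ to the right of column $x$, one produces a full vein whose fat vein contains $v$ while its columns $\le x-1$ meet no fat vein of $\mathbb{B}$ lying below $\mathcal{V}_i$, contradicting the maximal choice of $\mathbb{B}$ (for a blue $u$ one uses instead that $\mathcal{V}^f_{i'}$ is independent of $\mathcal{V}^f_i$). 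Hence the blue/green/pink neighbours of $v$ in column $x-1$ are precisely those in $\mathcal{V}^f_i$ and higher. The claim for column $x+1$ follows by the mirror argument, with $\mathcal{S}_{i-1}$ as the boundary region; note that $\mathcal{V}^f_i\cap C_{x+1}$ is entirely yellow, which is why it is correctly absent from the right-hand list.

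The main obstacle is this ``only if'' step of the $\alpha_{x-1}=2$ case: exactly as in Proposition \ref{red}, it is where the two colouring rules -- that a green part-vein meets no fat vein, and that a pink vertex lies below a green vertex of the same slice -- are essential, and one must check carefully that the vein built from a hypothetical lower neighbour is genuinely incompatible with the construction (that its portion left of column $x$ is vein-disjoint from the relevant fat veins, and that the edge joining columns $x-1$ and $x$ is a legitimate vein edge). Everything else is routine geometry of the stacking of slices and fat veins under $0$- and $2$-links.
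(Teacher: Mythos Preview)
The paper's own proof is essentially two lines: it observes that only $\alpha$-edges matter, then explicitly treats just the single case $u\in\mathcal{V}^f_i$ (for column $x-1$, the fat-vein definition forces adjacency; for column $x+1$, no such blue $u$ exists because the blue part of $\mathcal{V}^f_i$ stops at column $s$ while $v$ yellow forces $x>s$), and dismisses everything else as ``straightforward''. Your $\alpha_{x-1}=0$ analysis and your ``if'' direction for $\alpha_{x-1}=2$ are correct and simply make explicit the height-stacking that the paper leaves to the reader.

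Where your write-up needs tightening is the ``only if'' step, and this is a genuine issue rather than just verbosity. First, the contradiction is not with the \emph{maximality} of $\mathbb{B}$: the full vein you build coincides with $\mathcal{V}_i$ from column $x$ onward, so its fat vein certainly meets $\mathcal{V}^f_i$, and maximality is not violated. What it contradicts is the \emph{greedy lowest-vein} construction: your new vein is strictly below $\mathcal{V}_i$ in columns $j,\dots,x-1$ and (you must check) independent of $\mathcal{V}_1,\dots,\mathcal{V}_{i-1}$, so it should have been chosen instead of $\mathcal{V}_i$. Second, for blue $u\in\mathcal{V}^f_{i'}$ with $i'<i$, the bare statement ``$\mathcal{V}^f_{i'}$ is independent of $\mathcal{V}^f_i$'' does nothing on its own. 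You still need a vein: use that every vertex of $\mathcal{V}^f_{i'}$ lies on a part-vein from column $j$ inside $\mathcal{V}^f_{i'}$ (this follows from the upper-border recursion), splice this onto $\mathcal{V}_i$ at column $x$, and run the same lowest-vein argument. But now the fat vein of your new vein in columns $j,\dots,x-1$ may intersect $\mathcal{V}^f_{i'}$ itself, so it is not automatically independent of $\mathcal{V}_1,\dots,\mathcal{V}_{i-1}$; you have to argue more carefully, or observe that this case can only arise when $i'=i-1$ and handle it directly. The paper sidesteps all of this by fiat, so your instinct that this is ``the main obstacle'' is well-placed.
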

\begin{proof}
Note that as $u$ and $v$ are in consecutive columns we need only consider $\alpha$-edges. 

If $u$ is blue in column $x-1$ of $\mathcal{V}^f_i$ then yellow $v$ in column $x$ of $\mathcal{V}^f_i$ must be adjacent to $u$ from the definition of a fat vein. Equally, from the colouring definition for a fat vein  there cannot be a blue vertex in column $x+1$ of $\mathcal{V}^f_i$ if there is a yellow vertex in column $x$ of $\mathcal{V}^f_i$.

The other adjacencies are straightforward.
\end{proof}

Having established these propositions, as the pink and green vertices in a particular slice and column have the same adjacencies to the red and yellow vertices, we now combine the green and pink sets and simply refer to them all as \emph{green}.

%
%
%
%
%
\subsection{Extending \texorpdfstring{$\alpha$}{alpha} to the \texorpdfstring{$4$}{4}-letter alphabet}

Our analysis so far has been based on $\alpha$ being a word from the alphabet $\{0,2\}$. We now use the following lemma to extend our colouring to the case where $\alpha$ is a word over the $4$-letter alphabet $\{0,1,2,3\}$.

Let $\alpha$ be an infinite word over the alphabet $\{0,1,2,3\}$ and $\alpha^+$ be the infinite word over the alphabet $\{0,2\}$ such that for each $x \in \mathbb{N}$, 

\[
\alpha^+_x=
\begin{cases}
0&\text{ if } \alpha_x=0 \text{ or } 1,\\
2&\text{ if } \alpha_x=2 \text{ or } 3,
\end{cases}
\]

Denoting $\delta=(\alpha,\beta,\gamma)$ and $\delta^+=(\alpha^+,\beta,\gamma)$, let $G=(V,E)$ be a graph in the class $\GGG^\delta$ with a particular embedding in the vertex grid $V(\PPP)$. We will refer to $G^+=(V,E^+)$ as the graph with the same vertex set $V$ as $G$ from the class $\GGG^{\delta^+}$.

\begin{lemma}\label{lem-U-sim0123}
For any subset of vertices $U \subseteq V$, 2 vertices of $U$ in the same column of $V(\PPP)$ are $V\setminus U$-similar in $G$ if and only if they are $V\setminus U$-similar in $G^+$.
\end{lemma}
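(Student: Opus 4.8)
The plan is to reduce the statement to a single observation about bipartite complementation. First I would record exactly how $G$ and $G^+$ differ: they have the same vertex set $V \subseteq V(\PPP)$, the same $\beta$-edges, and the same $\gamma$-edges; and since $1$ is the bipartite complement of $0$ and $3$ is the bipartite complement of $2$, the edge set $E^+$ between $C_i$ and $C_{i+1}$ agrees with that of $E$ whenever $\alpha_i \in \{0,2\}$ and is the bipartite complement of that of $E$ whenever $\alpha_i \in \{1,3\}$. So $G^+$ is obtained from $G$ by bipartite-complementing some of the ``consecutive-column'' bipartite graphs and leaving everything else untouched.

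Next, fix two vertices $a = v_{x,p}$ and $b = v_{x,q}$ of $U$ in a common column $C_x$, and an arbitrary $w \in V \setminus U$. I would show that the biconditional ``$a \sim w$ iff $b \sim w$'' holds in $G$ exactly when it holds in $G^+$, by a short case analysis on where $w$ lies. If $w \notin C_{x-1} \cup C_{x+1}$, then any edge from $a$ or $b$ to $w$ is a $\beta$-edge (if $w$ lies in a non-adjacent column) or a $\gamma$-edge (if $w \in C_x$), and these are identical in $G$ and $G^+$; hence the biconditional is literally the same statement in both graphs. If instead $w \in C_{x-1} \cup C_{x+1}$, say $w \in C_{x-1}$ (the case $w \in C_{x+1}$ is symmetric), then adjacency of a vertex of $C_x$ to $w$ is governed by $\alpha_{x-1}$: when $\alpha_{x-1} \in \{0,2\}$ we have $\alpha^+_{x-1} = \alpha_{x-1}$ and nothing changes, while when $\alpha_{x-1} \in \{1,3\}$ passing to $G^+$ inverts the adjacency of \emph{every} vertex of $C_x$ to the fixed vertex $w$ simultaneously. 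In the latter case $a \sim_G w \iff a \not\sim_{G^+} w$ and $b \sim_G w \iff b \not\sim_{G^+} w$, so negating both sides shows ``$a \sim_G w \iff b \sim_G w$'' is equivalent to ``$a \sim_{G^+} w \iff b \sim_{G^+} w$''.

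Finally, $a$ and $b$ are $V\setminus U$-similar in $G$ precisely when ``$a \sim w \iff b \sim w$'' holds for every $w \in V \setminus U$, and by the previous paragraph this condition is insensitive to whether it is evaluated in $G$ or in $G^+$; the lemma follows. The argument is essentially routine; the only care needed is in the case bookkeeping, and in particular in observing that the exact direction of the chain edges ($j \ge k$ versus $j < k$) plays no role — what matters is only that, for a fixed $w$ in an adjacent column, the map $v \mapsto (v \sim w)$ on $C_x$ is globally inverted exactly when the relevant letter of $\alpha$ lies in $\{1,3\}$. I do not anticipate a genuine obstacle here; isolating the ``bipartite complement preserves equality of neighbourhoods on one side'' observation is the crux.
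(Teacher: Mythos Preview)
Your proposal is correct and follows essentially the same approach as the paper: a case analysis on the column of the witness vertex $w$ (same column, non-adjacent column, adjacent column), noting that only the adjacent-column case differs between $G$ and $G^+$, and there the simultaneous flip of adjacencies preserves the biconditional ``$a\sim w \iff b\sim w$''. Your framing via bipartite complementation is slightly cleaner than the paper's phrasing, but the mathematical content is identical.
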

\begin{proof}
Let $u_1$ and $u_2$ be two vertices in $U$ in the same column $x$ and $v$ be a vertex of $V\setminus U$ in column $y$. If $x=y$ then $v$ is in the same column as $u_1$ and $u_2$  and is either adjacent to both or neither depending on whether there is a $\gamma$-clique on column $x$, which is the same in both $G$ and $G^+$. If $|x-y|>1$ then $v$ is adjacent to both $u_1$ and $u_2$ if and only if there is a bond $(x,y)$ in $\beta$, which is the same in both $G$ and $G^+$. 

If $y=x+1$ then the adjacency of $v$ to $u_1$ and $u_2$ is determined by $\alpha_x$ in $G$ and $\alpha^+_x$ in $G^+$. If $\alpha_{x}=\alpha^+_{x}$ (i.e. both $0$ or both $2$) then the adjacencies are the same in $G$ and $G^+$. If $\alpha_{x}=1$ and $\alpha^+_{x}=0$, then $u_1$ and $u_2$ are both adjacent to $v$ in $G$ if and only if they are both non-adjacent to $v$ in $G^+$. If $\alpha_{x}=3$ and $\alpha^+_{x}=2$, then $u_1$ and $u_2$ are both adjacent to $v$ in $G$ if and only if they are both non-adjacent to $v$ in $G^+$. 

Hence $u_1$ and $u_2$ have the same neighbourhood in $V\setminus U$ in $G$ if and only if they have the same neighbourhood in $V\setminus U$ in $G^+$.
\end{proof}

\begin{lemma}\label{blue_green}
For a graph $G \in \mathcal{G}^\delta \cap \Free(H^\delta_{j,1}(k,k))$ and $G^+$ defined as above, let the vertices of $G^+_{[j,j+k-1]}$ be coloured as per Section \ref{sect:vert_col}. Then the same colouring applied to the vertices of $G_{[j,j+k-1]}$ has the property that a column of $G_{[j,j+k-1]}$ can be partitioned into at most $k-1$ disjoint blue sets and $k$ disjoint green sets, so that any red or yellow vertex is either adjacent to all or none of a given green/blue vertex set. 
\end{lemma}
\begin{proof}
As $\alpha^+$ is a word over the alphabet $\{0,2\}$ the results of  Sections \ref{sect:veins_slices} and \ref{sect:vert_col} can be applied, in particular Propositions \ref{prop:full_veins}, \ref{red} and \ref{yellow}. It  follows that for $G^+_{[j,j+k-1]}$:
\begin{itemize}
\item  there are no more than $(k-1)$ independent full veins, and consequently at most $k$ slices,
\item two blue vertices in the same fat vein and column have the same red/yellow neighbourhood, and
\item two green vertices in the same slice and column have the same red/yellow neighbourhood. 
\end{itemize}
Lemma \ref{lem-U-sim0123}, with $U^b$ and $U^g$ being the blue and green vertices respectively, and $U=U^b \cup U^g$, tells us that these statements also apply to $G_{[j,j+k-1]}$ and the result follows.
\end{proof}

\subsection{Panel construction}\label{Sect:panels}

We now construct the panels of $G$ based on our embedding of $G$ in $\PPP^{\delta}$.

To recap, $\delta^*=\delta_{[j,j+k-1]}$ is the $k$-factor that defines the forbidden graph  $H^\delta_{j,1}(k,k)$ and we will use the repeated instances of $\delta^*$ to divide our embedded graph $G$ into panels.

Define $t_0, t_1, \dots, t_{z}$ where $t_0$ is the index of the column before the first column of the embedding of $G$, $t_{z}$ is the index of the last column of the embedding of $G$ and $t_i$ ($0<i<z$) represents the rightmost letter index of the $i$-th copy of $\delta^*$ in $\delta$ , such that $t_i>k+t_{i-1}$ to ensure the copies are disjoint. Hence, the $i$-th disjoint copy of $\delta^*$  in $\delta$ corresponds to columns $C_{[t_i-k+1,t_i]}$ of $\PPP^\delta$ and we denote the induced graph on these columns $G_i=G_{[t_i-k+1,t_i]}$ and denote $G^+_i$ as the corresponding graph in $G^+$. 

Colour the vertices of $G^+_i$ blue, yellow, green or red as described in Section \ref{sect:vert_col} and then apply the same colouring to the vertices of $G_i$. Call these $G_i$ vertex sets  $U^b_i$, $U^y_i$, $U^g_i$ and $U^r_i$ respectively. Denote $U^w_1$ as the vertices in $G_{[t_0+1,t_1-k]}$, and for $1 < i < z$ denote $U^w_i$ the set of vertices in $G_{[t_i+1,t_{i+1}-k]}$ and colour the vertices in each $U^w_i$ white.

We now create a sequence of \emph{panels}, the first panel is $P_1=U^w_1 \cup U^g_1 \cup U^b_1$, and subsequent panels given by

\[P_i=U^y_{i-1}\cup U^r_{i-1}\cup U^w_i\cup U^g_i\cup U^b_i.\]

These panels create a disjoint partition of the vertices of our embedding of $G$. The following lemma will be used to put a bound on the number of labels required in a linear clique-width expression to create edges between panels. We denote $\mathbb{P}_i=\cup_{s=1}^i P_s$.

\begin{lemma}\label{lem-Usim-panels}
Let  $(\alpha,\beta,\gamma)$ be a recurrent $\delta$-triple where $\alpha$ is an infinite word over the alphabet $\{0,1,2,3\}$, $\gamma$ is an infinite binary word and $\beta$ is a bond set which has bounded $\mathcal{M}^\beta$, so that $\mathcal{M}^\beta(n) < M$ for all $n \in \mathbb{N}$. 

Then for any graph $G=(V,E) \in \GGG^\delta \cap Free(H^\delta_{j,1}(k,k))$ for some $j,k \in \mathbb{N}$ with vertices $V$ partitioned into panels $\{P_1,\dots,P_z\}$ and $1 \le i \le z$, 
\[\mu(G,V \setminus \mathbb{P}_i) < M + 2k^2.\]
\end{lemma}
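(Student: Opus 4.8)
The plan is to partition the columns occupied by $\mathbb{P}_i$ into a \emph{bulk} part (the columns strictly to the left of the white column adjoining $G^*_i$) and a \emph{boundary} part consisting of just $k$ columns, and to bound separately the number of $V\setminus\mathbb{P}_i$-similarity classes each contributes. First I would record the elementary geometry. Since the blue and green vertices $U^b_i\cup U^g_i$ of $G^*_i$ never reach the rightmost column $t_i$, all of $\mathbb{P}_i$ lies in columns $t_0+1,\dots,t_i-1$: columns $t_0+1,\dots,t_i-k$ are covered by $P_1,\dots,P_{i-1}$ together with $U^w_i$, and the only vertices of $\mathbb{P}_i$ in columns $t_i-k+1,\dots,t_i$ are $U^b_i\cup U^g_i$. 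Dually $V\setminus\mathbb{P}_i$ lies in columns $t_i-k+1,\dots,t_\omega$: in columns $\le t_i$ it is exactly the yellow and red vertices $U^y_i\cup U^r_i$ of $G^*_i$, and in columns $>t_i$ it is all of $G$. I may assume $i<\omega$, since $\mathbb{P}_\omega=V(G)$ forces $\mu(G,\mathbb{P}_\omega)=1<M+2k^2$. Every $V\setminus\mathbb{P}_i$-similarity class of $\mathbb{P}_i$ either sits entirely in the bulk or meets one of the $k$ boundary columns $C_{t_i-k},\dots,C_{t_i-1}$, so it is enough to bound the number of distinct neighbourhoods (in $V\setminus\mathbb{P}_i$) realised within each of these pieces and add up (a class meeting several boundary columns is over-counted, which only weakens the bound).

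For the bulk — the vertices of $\mathbb{P}_i$ in columns $t_0+1,\dots,t_i-k-1$ — the point is that such a vertex is at distance at least $2$ from every column containing a vertex of $V\setminus\mathbb{P}_i$, so its adjacency to $V\setminus\mathbb{P}_i$ is governed solely by $\beta$-bonds, hence depends only on its column, via the bond pattern from that column to the columns $t_i-k+1,\dots,t_\omega$. The number of distinct such patterns over columns $1,\dots,t_i-k$ is $\mu(B^\beta([1,t_\omega]),[1,t_i-k])\le\mathcal{M}^\beta(t_\omega)<M$, so the bulk contributes at most $M-1$ classes. For each of the $k-1$ boundary columns $C_x$ with $t_i-k+1\le x\le t_i-1$ (whose $\mathbb{P}_i$-vertices are blue or green) I would apply Lemma~\ref{lem-U-sim0123} to test similarity in the $\{0,2\}$-reduction $G^+$, on which the colouring and Propositions~\ref{red} and~\ref{yellow} are defined. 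Two blue/green vertices of $C_x$ have identical adjacency to all of $V\setminus\mathbb{P}_i$ except possibly to the yellow and red vertices lying in $C_{x-1}$ and $C_{x+1}$ (same-column yellow vertices are attached through $\gamma_x$, and far columns through bonds, both independent of which of the two vertices we pick). Propositions~\ref{red} and~\ref{yellow} say precisely that each such adjacency is a threshold condition on the index of the slice or fat vein of $G^*_i$ containing the blue/green vertex; so the restriction to $C_x$ of the $V\setminus\mathbb{P}_i$-neighbourhood is a function of that index. By Proposition~\ref{prop:full_veins} there are $b\le k-1$ fat veins, hence at most $2b+1\le 2k-1$ slices and fat veins, and therefore $C_x$ contributes at most $2k-1$ classes.

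The remaining column is the white column $C_{t_i-k}$ immediately to the left of $G^*_i$. All of its vertices are white and lie in $\mathbb{P}_i$, and they share a common bond pattern to the columns $t_i-k+2,\dots,t_\omega$, so they can be separated by $V\setminus\mathbb{P}_i$ only through the link $\alpha^+_{t_i-k}$ to the yellow vertices sitting in the leftmost column $C_{t_i-k+1}$ of $G^*_i$. Here I would use that the upper border of a full vein is initialised by $w_j=u_j$, so each fat vein of $G^*_i$ has exactly one vertex in column $C_{t_i-k+1}$; hence $C_{t_i-k+1}$ contains at most $b\le k-1$ yellow vertices (one per whole-yellow fat vein). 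As $\alpha^+_{t_i-k}$ is a matching or a chain, the yellow-neighbourhood of a white vertex of $C_{t_i-k}$ among these at most $k-1$ vertices is determined by its row and runs over at most $k$ possibilities, so $C_{t_i-k}$ contributes at most $k$ classes. Altogether $\mu(G,\mathbb{P}_i)\le (M-1)+k+(k-1)(2k-1)=M+2k^2-2k<M+2k^2$, as required.

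The step I expect to be the main obstacle is precisely the white column $C_{t_i-k}$: it is the one family of vertices of $\mathbb{P}_i$ not directly covered by Propositions~\ref{red} and~\ref{yellow}, and a chain-link at $\alpha_{t_i-k}$ could a priori distinguish unboundedly many of its vertices via the first column of $G^*_i$. The resolution — and essentially the only place the fine structure of the veins is invoked beyond Propositions~\ref{red} and~\ref{yellow} — is the identity $w_j=u_j$, which caps the number of yellow vertices available in that column at $b\le k-1$. Everything else is bookkeeping: aligning $\mathcal{M}^\beta$ with the bulk count, counting slices and fat veins for the interior boundary columns, and checking that the constants $k$, $k-1$ and $2k-1$ multiply out below $2k^2$.
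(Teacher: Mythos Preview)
Your proof is correct and follows the same overarching strategy as the paper: reduce to the $\{0,2\}$ alphabet via Lemma~\ref{lem-U-sim0123}, control the ``far'' part of $\mathbb{P}_i$ using the bound on $\mathcal{M}^\beta$, and control the part of $\mathbb{P}_i$ inside $G^*_i$ using Propositions~\ref{red} and~\ref{yellow} together with Proposition~\ref{prop:full_veins}. The difference is in the decomposition. The paper partitions $\mathbb{P}_i$ by \emph{colour} --- into $\mathbb{P}_i\setminus(U^b_i\cup U^g_i)$, $U^b_i$, and $U^g_i$ --- and asserts bounds of $M$, $k(k-1)$, and $k(k+1)$ respectively; you partition by \emph{column} --- bulk, the single white column $C_{t_i-k}$, and the $k-1$ coloured columns --- and obtain $M-1$, $k$, and $(k-1)(2k-1)$. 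Your column-wise split is arguably more careful on one point: the paper's bound of $M$ for $\mathbb{P}_i\setminus(U^b_i\cup U^g_i)$ tacitly treats all white vertices in $C_{t_i-k}$ as equivalent, yet when $\alpha^+_{t_i-k}=2$ they can be separated by yellow vertices in $C_{t_i-k+1}$. Your observation that $w_j=u_j$ caps the number of such yellow vertices at $b\le k-1$ (and hence the number of resulting classes at $k$) is exactly what is needed to justify this, and the slightly sharper constants you obtain for the blue/green part absorb the extra $k$ to land below $M+2k^2$.
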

\begin{proof}
Considering the three sets of vertices $\mathbb{P}_i \setminus (U^b_{i} \cup U^g_{i})$, $U^b_{i}$ and $U^g_{i}$ in graph $G$ separately, we have:
\begin{enumerate}[label=(\alph*)]
\item the number of distinct neighbourhoods of the vertex set $V \setminus \mathbb{P}_i$ in the vertex set $\mathbb{P}_i \setminus (U^b_{i} \cup U^g_{i})$ is bounded by $M$.
\item the number of distinct neighbourhoods of the vertex set $V \setminus \mathbb{P}_i$ in the vertex set $U^b_{i}$ is bounded by $k(k-1)$, noticing that from Lemma \ref{blue_green} two blue vertices in the same fat vein and column have the same neighbourhood in $V \setminus \mathbb{P}_i$.
\item the number of distinct neighbourhoods of the vertex set $V \setminus \mathbb{P}_i$ in the vertex set $U^g_{i}$ is bounded by $k(k-1)$, noticing that from Lemma \ref{blue_green} two green vertices in the same slice and column have the same neighbourhood in $V \setminus \mathbb{P}_i$.
\end{enumerate}
This covers all vertices of $\mathbb{P}_i$ so
\begin{align*}
\mu(G,V \setminus \mathbb{P}_i) \le M + k(k-1)+ k(k-1)<  M + 2k^2. 
\end{align*}
\end{proof}

%
%
%
%

\subsection{When \texorpdfstring{$\GGG^\delta$}{\GGG{}} is a minimal class of unbounded clique-width}\label{minimalproof} 

Our strategy for proving that an arbitrary graph $G$ in a proper hereditary subclass of $\GGG^\delta$ has bounded linear clique-width (and hence bounded clique-width) is to define an algorithm to create a linear clique-width expression that allows us to recycle labels so that we can put a bound on the total number of labels required, however many vertices there are in $G$. We do this by constructing a linear clique-width expression for each panel $P_i$ in $G$ in a linear sequence, leaving the labels on each vertex of previously constructed panels $\mathbb{P}_{i-1}$ with an appropriate label to allow edges to be constructed between the current panel/vertex and previous panels. To  be able to achieve this we require the following ingredients:
\begin{enumerate}[label=(\alph*)]
\item $\delta$ to be recurrent so we can create the panels,
\item a bound on the number of labels required to create each new panel,
\item a process of relabelling so that we can leave appropriate labels on  each vertex of the current panel to enable connecting to previous panels, before moving on to the next panel, and
\item a bound on the number of labels required to create edges to previously constructed panels.  
\end{enumerate}

We have $(a)$ by assumption and we will deal with $(c)$ and $(d)$ in the proof of Theorem \ref{thm-0123minim}. The next two lemmas show how we can restrict $\delta$ further, using a new concept of 'gap factors', to ensure $(b)$ is achieved.

\begin{lemma}\label{lem-lcw-rectangular-block}
For any $\delta$ and graph $G\in \GGG^\delta$ and any $j_1,j_2 \in \mathbb{N}$ where $|j_2-j_1|=\ell-1$
\[lcw(G_{[j_1,j_2]})\le 2\ell\]
\end{lemma}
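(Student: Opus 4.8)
The plan is to exhibit an explicit linear clique-width expression for $G_{[j_1,j_2]}$ using exactly $2\ell$ labels, built by sweeping all $\ell$ columns simultaneously, one grid-row at a time, from the top row downwards, and within each row inserting the vertices from the leftmost column to the rightmost. Assume without loss of generality that $j_1\le j_2$ and relabel the columns occurring in $G_{[j_1,j_2]}$ as $1,\dots,\ell$. The $2\ell$ labels are $c_1,\dots,c_\ell,p_1,\dots,p_\ell$, where at any stage $c_x$ marks the vertex of column $x$ in the row currently being processed (if it exists) and $p_x$ marks the vertices of column $x$ lying in rows already completed, i.e.\ in strictly higher rows. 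When the vertex $v$ of column $x$ in the current row $r$ is created it receives label $c_x$; we then add all edges from $v$ to vertices already present; and once the whole of row $r$ has been built we apply $\rho_{c_x\to p_x}$ for every $x$ and move on to row $r-1$. Only a single vertex is ever added to one growing labelled graph, so the resulting expression is linear.

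The key observation is that at the instant $v$ (column $x$, row $r$) is created, the set of vertices already present is exactly: every vertex lying in a strictly higher row, each carrying the label $p_y$ of its column $y$; together with the vertices of row $r$ in columns $1,\dots,x-1$, each carrying the label $c_y$ of its column $y$. Hence a present vertex's label records its column and whether it sits strictly above row $r$ or in row $r$, and by the definition of the edges of $\PPP^\delta$ this is precisely the data that decides adjacency to $v$. Concretely, when creating $v$ we would: apply $\eta_{c_x,p_x}$ iff $\gamma_x=1$; for every bond $(x,y)\in\beta$ with $y<x$ apply $\eta_{c_x,c_y}$ and $\eta_{c_x,p_y}$, and for every bond with $y>x$ apply $\eta_{c_x,p_y}$; for the $\alpha_{x-1}$-edges to column $x-1$ (when $x>1$) do nothing if $\alpha_{x-1}=3$, apply $\eta_{c_x,c_{x-1}}$ if $\alpha_{x-1}=0$, apply $\eta_{c_x,p_{x-1}}$ if $\alpha_{x-1}=1$, and apply both if $\alpha_{x-1}=2$; and symmetrically for the $\alpha_x$-edges to column $x+1$ (when $x<\ell$) do nothing if $\alpha_x\in\{0,2\}$ and apply $\eta_{c_x,p_{x+1}}$ if $\alpha_x\in\{1,3\}$, since only the rows of column $x+1$ strictly above $r$ are present.

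Every edge of $G_{[j_1,j_2]}$ joins two vertices, one created after the other, and the recipe creates each edge from the side of the later-created endpoint; so the only real work is a short finite verification that (i) every required edge is indeed produced, and (ii) no $\eta$-step ever joins two vertices that must remain non-adjacent (in particular, no later operation reconnects vertices of a column that should stay separated). This reduces to checking, for each value of $\alpha$ and for a bond, that the neighbourhood of $v$ within an adjacent column is described correctly by the dichotomy ``already present'' versus ``not yet created''. For $\alpha\in\{0,1\}$ and for bonds this is immediate; the only slightly delicate cases are the chain letters $\alpha\in\{2,3\}$, where the neighbourhood of $v$ inside the adjacent column is an up-set (respectively a down-set) of rows — which under this top-to-bottom, left-to-right sweep is exactly ``all of that column present so far'' (labels $p$ together with $c$), respectively ``none of it present yet'' (and so handled entirely from the other endpoint, once it is created in a lower row). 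I expect this bookkeeping over the four $\alpha$-types and the bonds to be the main, though routine, obstacle. Once it is checked, the expression constructs $G_{[j_1,j_2]}$ using the $2\ell$ labels $c_1,\dots,c_\ell,p_1,\dots,p_\ell$, giving $lcw(G_{[j_1,j_2]})\le 2\ell$.
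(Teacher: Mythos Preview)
Your proof is correct and follows essentially the same approach as the paper: a row-by-row sweep using two labels per column, one for the vertex in the current row and one for all vertices in already-processed rows, exploiting the fact that a new vertex's adjacency to any fixed column depends only on whether the other endpoint lies in the current row or in an earlier one. The only cosmetic differences are that you sweep top-to-bottom (the paper goes bottom-to-top) and that you give the explicit case analysis for each $\alpha$-letter and for bonds, whereas the paper simply asserts that ``any vertex in this row has the same relationship with all vertices in rows $1,\dots,r$ of any column $i$'' and leaves the verification implicit.
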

\begin{proof}
We construct $G_{[j_1,j_2]}$ using a row-by-row linear method, starting in the bottom left. For each of the $\ell$ columns, we create 2 labels: one label $c_1,\dots,c_{\ell}$ for the vertex in the \emph{current} row being constructed, and one label $e_1,\dots,e_{\ell}$ for the vertices in all \emph{earlier} rows. 

For the first row, we insert the (max) $\ell$ vertices using the labels $c_1,\dots,c_{\ell}$, and since every vertex has its own label we can insert all necessary edges. Now relabel $c_i \to e_i$ for each $i$.

Suppose that the first $r$ rows have been constructed, in such a way that every existing vertex in column $i$ has label $e_i$. We insert the (max) $\ell$ vertices in row $r+1$ using labels $c_1,\dots,c_{\ell}$. As before, every vertex in this row has its own label, so we can insert all edges between vertices within this row. Next, note that any vertex in this row has the same relationship with all vertices in rows $1,\dots,r$ of any column $i$. Since these vertices all have label $e_i$ and the vertex in row $r+1$ has its own label, we can add edges as determined by $\alpha$, $\beta$ and $\gamma$ as necessary. Finally, relabel $c_i \to e_i$ for each $i$, move to the next row and repeat until all rows have been constructed.
\end{proof}


We will call a factor of a $\delta$-triple between, and including, some consecutive disjoint pair of occurrences of a $k$-factor $\delta^*=\delta_{[j,j+k-1]}$, a \emph{$\delta^*$-gap factor}. An \emph{$\mathcal{N}^{\delta}$-bounded recurrent} $\delta$-triple is a recurrent triple where, for any factor $\delta^*$ and any  $\delta^*$-gap factor $\delta_Q$, the value of $\mathcal{N}^{\delta}(Q)$ is bounded by a function of $\delta^*$ only (i.e. it is bounded irrespective of the $\delta^*$-gap factor chosen). In particular, from Lemma \ref{23-N-unbound}, it follows that if $\delta$ is $\mathcal{N}^{\delta}$-bounded recurrent then there is a bound on the number of $2$s and $3$s in the $\alpha$ component  of any $\delta^*$-gap factor.

If $\delta$ is almost periodic, so that for any factor $\delta^*$ of $\delta$ every factor of $\delta$ of length at least  $\mathcal{L}(\delta^*)$ contains $\delta^*$, then each $\delta^*$-gap factor $\delta_Q$ covers a maximum of $\mathcal{L}(\delta^*)+k$ columns. As a consequence of Lemma \ref{lem-lcw-rectangular-block}, $\mathcal{N}^{\delta}(Q)$ is bounded by $2(\mathcal{L}(\delta^*)+k)$ (i.e a function of $\delta^*$ only) irrespective of the $\delta^*$-gap factor chosen.
Hence, every almost periodic $\delta$-triple is also $\mathcal{N}^{\delta}$-bounded recurrent. 

In addition, we know there exist $\mathcal{N}^{\delta}$-bounded recurrent $\delta$-triples which are not almost periodic. In \cite{brignall_cocks:uncountable:} a recurrent but not almost periodic binary word $\psi$ was constructed by a process of substitution. If we take $\delta=(\psi, \emptyset, 0^\infty)$,  then we have an example of an $\mathcal{N}^{\delta}$-bounded recurrent $\delta$-triple that is not almost periodic .

\begin{lemma}\label{lem:delta-gap}
Let $\delta$ be an $\mathcal{N}^{\delta}$-bounded recurrent triple with $k$-factor $\delta^*=\delta_{[j,j+k-1]}$. Then for any graph $G \in \GGG^\delta$, where $V[G] \subseteq C_{Q}$ where $Q$ is an interval such that $\delta_Q$ is a factor of a $\delta^*$-gap factor, there exists a bound on the linear clique-width of $G$ that is a function of $\delta^*$ only.
\end{lemma}
\begin{proof} 
As $\delta$ is an $\mathcal{N}^{\delta}$-bounded recurrent triple there exists a bound $N(\delta^*)$ on $\mathcal{N}^{\delta}(Q)$, where $Q$ is any interval such that $\delta_Q$ is a subset of a $\delta^*$-gap factor.
It follows from Lemma \ref{23-N-unbound} that there is a bound, say $J(\delta^*)$, on the number of $2$s and $3$s in the $\alpha$ factor of any $\delta^*$-gap factor $\delta_{Q}$. 

We can use the row-by-row linear method from the proof of Lemma \ref{lem-finite23} to show that for any graph $G\in \GGG^\delta$, with $V[G] \subseteq C_{Q}$ we have $lcw(G)\le 2J+N+2$.
\end{proof}

We are now in a position to define a set of hereditary graph classes $\GGG^\delta$ that are minimal of unbounded clique-width. We will denote $\Delta_{min} \subseteq \Delta$ as the set of all $\delta$-triples in $\Delta$ with the characteristics:
\begin{enumerate}[label=(\alph*)] 
\item $\delta$ is $\mathcal{N}^{\delta}$-bounded recurrent, and
\item the bond set $\beta$ has bounded $\mathcal{M}^\beta$.  
\end{enumerate}

\begin{thm}\label{thm-0123minim}
If $\delta \in \Delta_{min}$ then $\GGG^\delta$ is a minimal hereditary class of both unbounded linear clique-width and unbounded clique-width. 
\end{thm}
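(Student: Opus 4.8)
The plan is to establish minimality directly. Since $\delta\in\Delta$, Theorem~\ref{thm-0123unbound} gives that $\GGG^\delta$ has unbounded clique-width, and hence (as $lcw\ge\cwd$) unbounded linear clique-width; so it remains to show that every proper hereditary subclass $\CCC\subsetneq\GGG^\delta$ has bounded linear clique-width. Fix such a $\CCC$ and pick a finite $F\in\GGG^\delta\setminus\CCC$; since $\CCC$ is hereditary, $\CCC\subseteq\Free(F)$, and, as noted at the start of Section~\ref{Sect:Minimal}, $F\le H^\delta_{j,1}(k,k)$ for some $j,k\in\mathbb{N}$, so $\CCC\subseteq\GGG^\delta\cap\Free(H^\delta_{j,1}(k,k))$. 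It therefore suffices to produce a constant $B=B(\delta,j,k)$ with $lcw(G)\le B$ for every $G\in\GGG^\delta\cap\Free(H^\delta_{j,1}(k,k))$.

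Fix such a $G$ together with an embedding of it in $\PPP^\delta$. Using recurrence of $\delta$ (ingredient (a) of $\Delta_{min}$) and the $k$-factor $\delta^*=\delta_{[j,j+k-1]}$, partition $V(G)$ into panels $P_1,\dots,P_\omega$ exactly as in Section~\ref{Sect:panels}, so that each $P_i$ is contained in the columns $C_{q_i}$ of a $\delta^*$-gap factor and $\mathbb{P}_i=\bigcup_{s\le i}P_s$. I would build a linear clique-width expression for $G$ panel by panel from left to right, maintaining the invariant that once $\mathbb{P}_{i-1}$ has been constructed its vertices carry only \emph{archive labels}: one label for each $V\setminus\mathbb{P}_{i-1}$-similarity class of $\mathbb{P}_{i-1}$. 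By Lemma~\ref{lem-Usim-panels}---this is where ingredient (c), the boundedness of $\mathcal{M}^\beta$ by some $M$, enters---there are fewer than $M+2k^2$ such classes, and by the definition of similarity every adjacency from a not-yet-built vertex into $\mathbb{P}_{i-1}$ is determined by these labels.

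To construct $P_i$, splice into the caterpillar the linear clique-width expression for $G[P_i]\in\GGG^\delta$ supplied by Lemma~\ref{lem:delta-gap}---this is where ingredient (b), the boundedness of $\mathcal{N}^\delta_*$, enters---which creates the vertices of $P_i$ one at a time using at most $2J+N+2$ \emph{working labels}, disjoint from the archive labels. The extra step is that at the moment a new vertex $v$ of $P_i$ is created, while it is still the unique vertex with its label, I also add all edges from $v$ to those archive classes of $\mathbb{P}_{i-1}$ to which $v$ is adjacent; this is legitimate because $v\notin\mathbb{P}_{i-1}$, so each archive class is wholly adjacent or wholly non-adjacent to $v$. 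When $P_i$ is complete I perform the transition to $\mathbb{P}_i$: relabel each old archive label to the appropriate new archive label---each $V\setminus\mathbb{P}_{i-1}$-class of $\mathbb{P}_{i-1}$ is contained in a single $V\setminus\mathbb{P}_i$-class since $V\setminus\mathbb{P}_i\subseteq V\setminus\mathbb{P}_{i-1}$---and likewise relabel the working labels of $P_i$ into new archive labels. The number of labels in use never exceeds $(M+2k^2)+(2J+N+2)+c$ for a small absolute constant $c$; take this quantity to be $B$. Processing $P_\omega$ yields a linear clique-width expression for $G=\mathbb{P}_\omega$ with at most $B$ labels, and since $B$ does not depend on $G$ this bounds $lcw$ on $\CCC$; as bounded linear clique-width implies bounded clique-width, $\GGG^\delta$ is minimal for both.

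The step I expect to be the main obstacle is the panel transition: one must verify that the working-label classes with which Lemma~\ref{lem:delta-gap} leaves a freshly built $P_i$ are fine enough to be merged, each into a single $V\setminus\mathbb{P}_i$-similarity class---equivalently, that two vertices of $P_i$ sharing a final working label really do have the same adjacency to all of $P_{i+1},\dots,P_\omega$, which may require building $P_i$ with its leftmost $k$ columns handled one label per column rather than with the generic scheme. This is exactly what the vein/slice colouring of Section~\ref{sect:vert_col} is designed to control: among the vertices of $P_i$ only the red and yellow interface vertices inherited from $G^*_{i-1}$ have neighbours further right, and symmetrically later panels only ever see the blue and green interface vertices of $G^*_i$, whose rightward (resp.\ leftward) adjacencies are pinned down by slice and vein membership together with a single letter of $\alpha$ via Propositions~\ref{red} and~\ref{yellow}. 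A secondary technicality is that Lemma~\ref{lem-Usim-panels} is proved for the reduction $\delta^+$ with $\alpha^+$ over $\{0,2\}$ and transferred through Lemma~\ref{lem-U-sim0123}, so the colouring and panel apparatus must be developed first for $\alpha$ over $\{0,2\}$ and only then lifted to the four-letter alphabet.
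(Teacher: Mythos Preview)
Your overall architecture matches the paper's: panel-by-panel linear construction, with archive labels governed by Lemma~\ref{lem-Usim-panels} and per-panel construction governed by Lemma~\ref{lem:delta-gap}. However, the panel transition that you correctly flag as the main obstacle is not resolved by your sketch, and the gap is real.

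The problem is not the leftmost $k$ columns. After building $P_i$ with the row-by-row scheme of Lemma~\ref{lem:delta-gap}, a vertex of $U^y_{i-1}\cup U^r_{i-1}\cup U^w_i$ carries a partition label $s_y$ reflecting only its two-row equivalence class \emph{inside} the gap $q_i$. Two vertices with the same $s_y$ can lie in columns with different $\beta$-bond patterns to $C_{[t_i+1,t_\omega]}$ and hence in different $V\setminus\mathbb{P}_i$-similarity classes; since a $\rho$ operation cannot split a label class, they then cannot be sent to distinct archive labels. Your assertion that ``only the red and yellow interface vertices \dots\ have neighbours further right'' is false precisely because of $\beta$: every column of $P_i$ may be bonded to later panels, and boundedness of $\mathcal{M}^\beta$ only says there are at most $M$ such patterns, not one. (Also, the colouring geometry is reversed in your last paragraph: the blue/green vertices of $G^*_i$ sit in the \emph{rightmost} $k$ columns of $P_i$ and are the ones with $\alpha$-edges into $P_{i+1}$.)

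The paper repairs this by enriching the working labels at construction time rather than at the transition. The partition labels become $s_{x,y}$ with an extra index $x\in\{1,\dots,M\}$ recording in advance the $[t_i-k+1,t_\omega]$-similarity class of the column in $B^\beta([1,t_\omega])$, so $MN$ rather than $N$ such labels; Proposition~\ref{Mlabels} then guarantees these merge consistently at each transition. The blue/green vertices of $G^*_i$ receive dedicated labels $bc_{x,y}$, $gc_{x,y}$ indexed by column and fat-vein/slice (another $2k^2$ labels), alongside matching $bp_{x,y}$, $gp_{x,y}$ held over from $G^*_{i-1}$. With this enrichment every final working-label class is contained in a single $V\setminus\mathbb{P}_i$-similarity class, and the resulting bound is $4k^2+MN+M+2J+2$, not $(M+2k^2)+(2J+N+2)+c$.
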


\begin{proof}
$\GGG^\delta$ has unbounded clique-width since $\delta \in \Delta$. We now show that if $\delta \in \Delta_{min}$ then every proper hereditary subclass $\CCC \subsetneq \GGG^\delta$ has bounded linear clique-width. From the introduction to this section we know that for such a subclass $\CCC$ there must exist some $H^\delta_{j,1}(k,k)$ for some $j$ and $k\in \mathbb{N}$ such that $\CCC \subseteq \Free(H^\delta_{j,1}(k,k))$. 

Using the same column indices $\{t_i\}$ used for panel construction of a graph $G \in \GGG^\delta$ in Section \ref{Sect:panels}, let the $i$-th $\delta^*$-gap factor be denoted $\delta_{q_i}$ where $q_1=[t_{0}+1,t_1]$ and $q_i=[t_{i-1}-k+1,t_i]$ for $1<i<z$. Note that for every $i$, $P_i\subseteq C_{q_i}$. 
From Lemma \ref{lem:delta-gap} we know there exist $J$ and $N\in \mathbb{N}$, each a function of $\delta^*$ only, such that the number of labels required to construct each panel $P_i$ by the row-by-row linear method for all $i \in \mathbb{N}$ is no more than $2J+N+2$.

As the bond-set $\beta$ has bounded $\mathcal{M}^\beta$, let $M\in \mathbb{N}$ be a constant such that $\mathcal{M}^\beta(n) < M$ for all $n \in \mathbb{N}$. 

Although a single panel $P_i$ can be constructed using at most $2J+N+2$ labels, we need to be able to recycle labels so that we can construct any number of panels with a bounded number of labels. We will show that any graph $G \in \Free(H^\delta_{j,1}(k,k))$ can be constructed by a linear clique-width expression that only requires a number of labels determined by the constants $M$, $N$, $J$ and $k$. 

For our construction of panel $P_i$, we will use the following set of $4k^2+MN+M+2J+2$ labels:

\begin{itemize}
\item $2$ \emph{current vertex labels}: $a_1$ and $a_2$;
\item $J$ \emph{current row labels}: $\{c_y : y=1,\dots,J\}$  for first $J$ columns;
\item $J$ \emph{previous row labels}: $\{p_{y} : y = 1,\dots,J\}$ for first $J$ columns;
\item $MN$ \emph{partition labels}: $\{s_{x,y} : x=1,\dots,M,y=1,\dots,N\}$, for vertices in $U^y_{i-1} \cup U^r_{i-1} \cup U^w_i$;
\item $k^2$ \emph{blue current panel labels}: $\{bc_{x,y} : x=1,\dots,k, y = 1,\dots,k\}$, for vertices $\mathcal{V}^f_{i,x} \cap U^b_{i} \cap C_y$;
\item $k^2$ \emph{blue previous panel labels}: $\{bp_{x,y} : x=1,\dots,k, y = 1,\dots,k\}$, for vertices $\mathcal{V}^f_{i-1,x} \cap U^b_{i-1} \cap C_y$;
\item $k^2$ \emph{green current panel labels}: $\{gc_{x,y} : x=0,\dots,k-1,y = 1,\dots,k\}$, for vertices $\mathcal{S}_{i,x} \cap U^g_{i} \cap C_y$;
\item $k^2$ \emph{green previous panel labels}: $\{gp_{x,y} : x=0,\dots,k-1, y = 1,\dots,k\}$, for vertices $\mathcal{S}_{i-1,x} \cap U^g_{i-1} \cap C_y$;
\item $M$ \emph{bond labels}: $\{m_{y} : y=1,\dots, M\}$, for vertices in previous panels for creating the $\beta$-bond edges between columns.
\end{itemize}

We carry out the following iterative process to construct each panel $P_i$ in turn.

Assume $\mathbb{P}_{i-1}=\cup_{s=1}^{i-1} P_s$ has already been constructed such that labels $m_y$, $bp_{x,y}$ and $gp_{x,y}$ have been assigned to the $M+2k^2$ $V \setminus \mathbb{P}_{i-1}$-similar sets as described in Lemma \ref{lem-Usim-panels}.

Using the same column indices $\{t_i\}$ used for panel construction (Section \ref{Sect:panels}) we assign a default partition label $s_{x,y}$ to each column of $U^y_{i-1} \cup U^r_{i-1} \cup U^w_i$ as follows: 
\begin{enumerate}[label=(\alph*)]
\item Consider the bond-graph $B^\beta([1,t_z])$  (Section \ref{bond-graph}). We partition the interval $Q=[t_{i-1}-k+1,t_i-k]$ into $[t_i-k+1,t_z]$-similar sets of which there are at most $M$, and use label index $x$ to identify values in $Q$ in the same $[t_i-k+1,t_z]$-similar set. Consequently, vertices in two columns of $U^y_{i-1} \cup U^r_{i-1} \cup U^w_i$ that have the same default label $x$ value have the same neighbourhood in $G_{[t_i-k+1,t_z]}$ and hence are in the same $V \setminus \mathbb{P}_{i}$-similar set.
\item Consider the two-row graph $T^\delta(Q)$ (Section \ref{two-row}). We partition vertices in $R_1(Q)$ into $R_2(Q)$-similar sets of which there are at most $N$. We create a corresponding partition of the interval $Q$ such that $v_{x,1}$ and $v_{y,1}$ are in the same equivalence class of $R_1(Q)$ if and only if $x$ and $y$ are in the same partition set of $Q$. We now use label index $y$ to identify values in the same partition set. Consequently, vertices in two columns of $U^y_{i-1} \cup U^r_{i-1} \cup U^w_i$ that have the same default label $y$ value have the same neighbourhood within $G_{Q}$.
\end{enumerate}

We construct each panel $P_i$ in the row-by-row linear method used for the graph with a finite number of $2$s and $3$s with bounded  $\mathcal{N}^\delta$ constructed in Lemma \ref{lem-finite23}. The current vertex always has a unique label. Thus, for each row, we use labels $c_1, \dots, c_J$ for vertices in the first $J$ columns and then alternate $a_1$ and $a_2$ for the current and previous vertices for the remainder of the row. 

For each new vertex in the current row we add edges as follows:
\begin{enumerate}[label=(\alph*)]
\item Insert required edges to the $\mathcal{M}^\beta+2k^2$ $V \setminus \mathbb{P}_{i-1}$-similar sets -- see Lemma \ref{lem-Usim-panels}. This is possible because vertices within each of these sets are either all adjacent to the current vertex or none of them are.

\item Insert required edges to vertices in the same or lower rows in the current panel. This is possible as these vertices all have labels $p_y$, $s_{x,y}$, $bc_{x,y}$ or $gc_{x,y}$ and, from the construction, vertices with the same $y$ value are either all adjacent to the current vertex or none of them are.

\end{enumerate}

Following completion of edges to the current vertex, we relabel the previous vertex as follows:
\begin{itemize}
	\item from $c_y$ to $p_y$ if it is in the first $J$ columns,
	\item	from $a_2$ (or $a_1$) to its default partition label $s_{x,y}$ if it is in $U^y_{i-1} \cup U^r_{i-1} \cup U^w_i$ but not in the first $J$ columns.
	\item from $a_2$ (or $a_1$) to $bc_{x,y}$ if it is in $\mathcal{V}^f_{i,x} \cap U^b_{i}$, and 
	\item from $a_2$ (or $a_1$) to $gc_{x,y}$ if it is in $\mathcal{S}_{i,x} \cap U^g_{i}$.
\end{itemize}

We now repeat for the next row of panel $P_i$.

Once panel $P_i$ is complete, relabel as follows:
 
Relabel vertices in accordance with their $V \setminus \mathbb{P}_{i}$-similar set, of which there are at most $M$. Note from Proposition \ref{Mlabels}, that two vertices with the same label $m_y$ from the previous $\mathbb{P}_{i-1}$ partition sets will still need the same label in  $\mathbb{P}_{i}$. Two equivalence classes from the  $\mathbb{P}_{i-1}$ partition may merge to form a new equivalence class in the $\mathbb{P}_{i}$ partition.  Hence, it is possible to relabel with the same label the old equivalence classes that merge, and then use the spare $m_y$ labels for any new equivalence classes that appear. We never need more than $M$ such labels. 

Also relabel all vertices with labels $bp_{x,y}$, $gp_{x,y}$, $p_y$ and  $s_{x,y}$  with the relevant bond label $m_y$ of their $V \setminus \mathbb{P}_{i}$-similar set. This is possible for the vertices labelled $s_{x,y}$ as the index $x$ signifies their $V \setminus \mathbb{P}_{i}$-similar set.

Now relabel $bc_{x,y}\rightarrow bp_{x,y}$  and $gc_{x,y}\rightarrow gp_{x,y}$ ready for the next panel. For the next panel we can reuse labels $a_1$, $a_2$, $c_y$, $p_y$, $s_{x,y}$, $bc_{x,y}$ and $gc_{x,y}$ as necessary.

This process repeated for all panels completes the construction of $G$. 

The maximum number of labels required to construct any graph $G \in \Free(H^\delta_{j,1}(k,k))$ is $4k^2+MN+M+2J+2$ and hence $\CCC$ has bounded linear clique-width.
\end{proof}

The conditions for $\delta$ to be in $\Delta_{min}$ are sufficient for the class $\GGG^\delta$ to be minimal. It is fairly easy to see that it is necessary for $\delta$ to be bounded recurrent. However, there remains a question regarding the necessity of the bond set $\beta$ to have bounded $\mathcal{M}^\beta$. We have been unable to identify any $\delta \not\in \Delta_{min}$ such that $\GGG^\delta$ is a minimal class of unbounded clique-width, hence:

\begin{conj}
The hereditary graph class $\GGG^\delta$ is minimal of unbounded clique-width if and only if $\delta \in \Delta_{min}$.
\end{conj}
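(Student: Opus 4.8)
The forward implication, that $\delta\in\Delta_{min}$ implies $\GGG^\delta$ is minimal, is Theorem~\ref{thm-0123minim}, so the content of the conjecture is the converse, and I would prove it in contrapositive form: if $\delta\in\Delta$ but $\delta\notin\Delta_{min}$, then $\GGG^\delta$ has a proper hereditary subclass of unbounded clique-width (if $\delta\notin\Delta$ there is nothing to prove). There are three ways $\delta$ can fail to lie in $\Delta_{min}$, and I would treat them in increasing order of difficulty.

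If $\delta$ is not recurrent, fix a $k$-factor $\delta^*=\delta_{[j_0,j_0+k-1]}$ with only finitely many occurrences, let $j_1$ be larger than the last one, and let $\delta'$ be the tail of $\delta$ starting at column $j_1+k$. Then $\GGG^{\delta'}\subseteq\GGG^\delta$ (restrict embeddings to the columns $\ge j_1+k$); it has unbounded clique-width by Lemma~\ref{lem:Nbound} and Theorem~\ref{thm-0123unbound}, since unboundedness of $\mathcal{N}$ is insensitive to the starting column; and it is a \emph{proper} subclass because $H^\delta_{j_0,1}(k,k)\notin\GGG^{\delta'}$. The last point is the only nontrivial step: it needs a rigidity statement to the effect that any induced copy of the square grid $H^\delta_{j_0,1}(k,k)$ inside an instance of $\PPP^{\delta'}$ must be essentially column-aligned, and would therefore force a copy of the factor $\delta^*$ in $\delta'$, a contradiction. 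This rigidity can be extracted from the veins/fat-veins machinery of Section~\ref{Sect:Minimal} (essentially the converse of Proposition~\ref{prop:full_veins}).

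If $\delta$ is recurrent but $\mathcal{N}^\delta_*$ is unbounded for some $k$-factor $\delta^*=\delta_{[j,j+k-1]}$, I would take the proper subclass $\CCC=\GGG^\delta\cap\Free(H^\delta_{j,1}(k,k))$ and show it has unbounded clique-width using the square graphs supported on the interiors $C_{[t_{i-1}+1,\,t_i-k]}$ of the $\delta^*$-gap factors. By the greedy choice of the $t_i$, these interiors contain no full copy of $\delta^*$ disjoint from the bounding copies, so, via the same rigidity argument, the square graph on such an interior is $H^\delta_{j,1}(k,k)$-free; and since $\mathcal{N}^\delta_*$ is unbounded, the two-row graphs over these interiors have unbounded $\mu$ (Lemma~\ref{lem:Nbound} absorbs the $k$ trimmed columns at each end), so the lower-bound machinery of Lemmas~\ref{lem:case2}, \ref{23-links} and \ref{lem-finite23}, applied with these gap-interior square graphs in place of $H(n,n)$, forces the clique-width to grow.

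The remaining case --- $\delta$ recurrent, every gap parameter $\mathcal{N}^\delta_*$ bounded, but $\mathcal{M}^\beta$ unbounded --- is the real obstacle, and is precisely the open part of the conjecture. The obvious candidate for a proper subclass of unbounded clique-width is the bond-graph class $\BBB^\beta$, which by Section~\ref{bond-graph} is always a proper hereditary subclass of $\GGG^\delta$; equivalently one could work with $\GGG^\delta$ restricted to graphs having one vertex per column. The plan would be to adapt the Section~\ref{Sect:Unbounded} lower-bound method to this ``single row'' setting: given a clique-width expression for a large bond-graph $B^\beta(Q)$, locate a low $\oplus$-node, and use the large value of $\mathcal{M}^\beta$ to produce a distinguished pairing of size $r$ between an initial segment of $Q$ and its complement, forcing $r$ labels via Lemma~\ref{labels}. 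The difficulty --- and the reason this is left open --- is that a large $\mathcal{M}^\beta$ only supplies many pairwise-distinct neighbourhoods across a \emph{single} cut $[1,m]\mid[m+1,n]$, whereas a clique-width lower bound needs the many-neighbourhoods phenomenon to survive the recursive bipartitioning of the expression, i.e.\ to be reproducible across many disjoint cuts simultaneously (this is exactly what the Ramsey-plus-pigeonhole bookkeeping in the proof of Lemma~\ref{lem:case2} achieves in the two-row setting). It is not clear that ``$\mathcal{M}^\beta$ unbounded'' together with the $\Delta_{min}$-style side conditions is strong enough to guarantee this: half-graphs and other chain-graph-like bond-sets have unbounded $\mathcal{M}^\beta$ yet all of their finite induced subgraphs lie in a class of bounded clique-width. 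A complete proof therefore requires either (i) showing that ``$\delta$ recurrent with $\mathcal{N}^\delta_*$ bounded'' is incompatible with such degenerate bond-sets and does force propagation of distinct neighbourhoods across many cuts, or (ii) exhibiting a $\delta\notin\Delta_{min}$ all of whose proper subclasses have bounded clique-width, which would refute the conjecture. My proposed plan pursues (i), but keeps (ii) in view as the honest alternative outcome.
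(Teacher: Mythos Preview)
The statement in question is a \emph{Conjecture} in the paper, not a theorem; the paper offers no proof. Immediately before stating it, the authors remark that the first two conditions defining $\Delta_{min}$ (recurrence, and bounded $\mathcal{N}^\delta_*$ for every gap factor) are ``fairly easy to see'' to be necessary, but they give no details, and they explicitly single out the bounded-$\mathcal{M}^\beta$ condition as the open question. So there is no proof in the paper to compare against; your proposal is a research outline for an open problem, and you have assessed the landscape accurately.

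On the first two cases your plan is sound in spirit but differs in packaging from the paper's one-line sketch. The paper's implied argument for non-recurrence (start of Section~\ref{Sect:Minimal}) is to forbid $H^\delta_{j,1}(k,k)$ directly --- which is trivially a proper subclass --- and assert that what remains still has unbounded clique-width; you instead pass to a tail $\delta'$, which makes unboundedness trivial but pushes the work into showing \emph{properness}. Either way the missing ingredient is the same ``rigidity'' statement you flag: that any embedding of the $k\times k$ square into $\PPP^\delta$ (or $\PPP^{\delta'}$) is essentially column-aligned and therefore witnesses a copy of the factor $\delta^*$. Neither you nor the paper supplies this, and it does not drop out of Proposition~\ref{prop:full_veins} without further work (that proposition assumes a specific embedding). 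The same caveat applies to your case-two argument that the gap-interior square graphs are $H^\delta_{j,1}(k,k)$-free.

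On the third case --- $\mathcal{M}^\beta$ unbounded --- you have correctly identified both the natural candidate subclass $\BBB^\beta$ and the real obstruction: a large $\mathcal{M}^\beta$ only controls a single cut, whereas a clique-width lower bound needs many-neighbourhoods to persist through the recursive structure of an expression. Your half-graph observation is exactly the point: such bond-sets have unbounded $\mathcal{M}^\beta$ yet $\BBB^\beta$ has bounded clique-width, so attacking via $\BBB^\beta$ alone cannot work in general. This is precisely why the statement is left as a conjecture, and your honest framing of option (ii) --- that a counterexample may exist --- is appropriate.
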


%
%
%
%
%
%

\section{Examples of new minimal classes} \label{Sect:examples}

It has already been shown in \cite{brignall_cocks:uncountable:} that there are uncountably  many minimal hereditary classes of graphs of unbounded clique-width. However, armed with the new framework we can now identify many other types of minimal classes. Some examples of $\delta=(\alpha,\beta,\gamma$) values that yield a minimal class are shown in Table \ref{table-examples}. 

\begin{table}[ht!]
\renewcommand{\arraystretch}{1.5}
\centering
\begin{tabular}{|m{1.75cm}|m{2.0cm}|m{6.5cm}|m{1.0cm}|m{2.0cm}|} 
\hline
\textbf{Example}  &\textbf{$\alpha$} &\textbf{$\beta$} \text{      } \textbf{($x,y \in \mathbb{N}$)} &\textbf{$\gamma$} &$\mathcal{M}^\beta$ bound\\
\hline
1. &$0^\infty$ &$\emptyset$ &$1^\infty$ & 1 \\
\hline
2. &$1^\infty$ &$(1,x+2)$ &$0^\infty$ & 2\\
\hline
3. &$(23)^\infty$ &$(x,x+2)$ &$0^\infty$ & 3\\
\hline
4. &$0^\infty$ &$(x,y): |x-y| \neq 1, x-y \equiv 1 \pmod{2}$ &$0^\infty$ & 3\\
\hline
5. &$1^\infty$ &$(x,y): x \neq y, x-y \equiv 0 \pmod{2}$ &$1^\infty$ & 2\\
\hline
6. &$2^\infty$ &$(x,y): 1<|x-y|\le n$ (fixed $n$)  &$0^\infty$ & n\\
\hline
\end{tabular}
\caption{New minimal hereditary graph classes of unbounded clique-width}
\label{table-examples}
\end{table}

%
%
%
%
\section{Concluding remarks}\label{Sect-Conclude}

The ideas of periodicity and recurrence are well established concepts when applied to symbolic sequences (i.e. words). Application to $\delta$-triples and in particular $\beta$-bonds is rather different and needs further investigation.

The $\beta$-bonds have been defined as generally as possible, allowing a bond between any two non-consecutive columns. The purpose of this was to capture as many minimal classes in the framework as possible. However, it may be observed that the definition is so general that for any finite graph $G$ it is possible to define $\beta$ so that $G$ is isomorphic to an induced subgraph of  $B^\beta(Q)$ and hence $\GGG^\delta$.

In these $\GGG^\delta$ graph classes we have seen that unboundedness of clique-width is determined by the unboundedness of a parameter measuring the number of distinct neighbourhoods between two-rows. The minimal classes are those which satisfy defined recurrence characteristics and for which there is a bound on a parameter measuring the number of distinct neighbourhoods between vertices in one row.    

Hence, whilst we have created a framework for many types of minimal classes, there may be further classes 'hidden' in the $\beta$-bonds. Indeed, we believe other types of minimal hereditary classes of unbounded clique-width exist and this is still an open area for research.

%
%
%
%
%
%

\paragraph{Acknowledgements} We are grateful to the anonymous referees whose careful reading of an earlier draft led to several significant improvements.

%
%
%
%

\bibliographystyle{plain}
\bibliography{refs}

\end{document}